\newcommand{\norme}[1]{\left\Vert #1\right\Vert}
\newtheorem{Lemme}{Lemma}[section]
\newtheorem{Prop}{Proposition}[section]  
\newtheorem{Def}{Definition}[section]
\newtheorem{Rmq}{Remark}[section]
\newtheorem{Thm}{Theorem}[section]
\theoremstyle{remark}
\newcommand{\be}{\begin{equation}}
\newcommand{\ee}{\end{equation}}
\newcommand{\ba}{\begin{array}}
\newcommand{\ea}{\end{array}}
\newcommand{\bea}{\begin{eqnarray}}
\newcommand{\eea}{\end{eqnarray}}
\newcommand{\bee}{\begin{eqnarray*}}
\newcommand{\eee}{\end{eqnarray*}}
\newcommand{\B} {\mathbb{B}}
\newcommand{\C} {\mathbb{C}} 
\newcommand{\N} {\mathbb{N}}
\newcommand{\R} {\mathbb{R}}    
\renewcommand{\S} {\mathbb{S}}    
\newcommand{\Z} {\mathbb{Z}} 
\newcommand{\cB} {\mathcal{B}}    
\newcommand{\cC} {\mathcal{C}}     
\newcommand{\cF} {\mathcal{F}}     
\newcommand{\cH} {\mathcal{H}}     
\newcommand{\cI} {\mathcal{I}}     
\newcommand{\cL} {\mathcal{L}}      
\newcommand{\cM} {\mathcal{M}}  
\newcommand{\cN} {\mathcal{N}}  
\newcommand{\cO} {\mathcal{O}}
\newcommand{\cW} {\mathcal{W}}      
\newcommand{\cZ} {\mathcal{Z}}      
\def \with {\quad\!\hbox{with}\!\quad}
\def \andf {\quad\!\hbox{and}\!\quad}
\def\Id{\hbox{Id}}
\def\dZ_1{\delta\!Z_1}
\def\e{\varepsilon}
\def\d{\partial}
\def\wh{\widehat}
\def\wt{\widetilde}
\def\ddq{\dot\Delta_q}
\title[Partially dissipative hyperbolic systems]{Partially dissipative
hyperbolic systems in the critical regularity setting : the multi-dimensional case}
\author{Timothée Crin-Barat, Raphaël Danchin}
\begin{document}

\begin{abstract}
   We are concerned with quasilinear  symmetrizable partially dissipative 
   hyperbolic systems in the whole space $\R^d$ with $d\geq2.$ Following our 
   recent work \cite{CBD1} dedicated to the one-dimensional case, 
   we establish the existence of global strong  solutions  and decay estimates in the critical regularity setting
   whenever the system under consideration satisfies the so-called (SK) (for Shizuta-Kawashima)  condition.
   Our results in particular apply to  the compressible Euler system with damping in the velocity equation.
   
     Compared to the   papers by  Kawashima and Xu \cite{XK1,XK2} devoted to similar issues,
     our use of \emph{hybrid}  Besov norms with different regularity exponents in low and high 
   frequency enable us  to pinpoint optimal smallness conditions for global well-posedness
   and  to get more accurate information on the qualitative properties of the constructed solutions. 
   
   A great part of our analysis  relies on the study of a Lyapunov functional in the spirit of  that of Beauchard and Zuazua in \cite{BZ}.
   Exhibiting  a  damped mode  with faster time decay  than the whole solution also plays a key role.
 \end{abstract}

\maketitle

\section*{Introduction}
We are concerned with   first order  $n$-component systems in $\R^d$ of the type:
\begin{equation}
A^0(V)\frac{\partial V}{\partial t} + \sum_{j=1}^dA^j(V)\frac{\partial V}{\partial x_j}=H(V) \label{GEQSYM}
\end{equation}
where  the (smooth) matrices valued functions 
$A^j$  ($j=0,\cdots,d$)  and vector valued function $H$ are defined on some   open subset   $\mathcal{O}_V$ of $ \mathbb{R}^n$
and the unknown $V=V(t,x)$ depends on the time variable 
$t\in \mathbb{R}_+$ and  on the space variable $x\in\mathbb{R}^d$  ($d\geq2$).
We assume that the system is \emph{symmetrizable} and satisfies
additional structure assumptions that will be specified in the next section.

System \eqref{GEQSYM} is supplemented with  initial data $V_0\in\mathcal O_V$ at time $t=0.$
We are concerned with the existence of global strong solutions in the case where $V_0$ is close 
to some constant state $\bar V$ such that $H(\bar V)=0.$
  \medbreak
In the nondissipative case, that is if $H\equiv0,$ it is classical that   symmetrizable quasilinear 
hyperbolic systems supplemented with initial data with Sobolev regularity $H^s$ such that $s>1+d/2$  
  admit local-in-time strong solutions (see e.g. \cite{Benzoni-Serre}),
  that may develop singularities  in finite time even if the initial data are small
   (see for instance the works by Majda in \cite{Majda} or  Serre in \cite{Serre}).  
     By contrast,  if in the neighborhood of $\bar V,$ the term $H(V)$ has the `good' sign 
     and acts on each component of the solution (like e.g. $H(V)=D(V-\bar V)$ for some matrix
     $D$ having all its eigenvalues with positive real part),   then  smooth perturbations of $\bar V$
    give rise to global-in-time solutions that tend  exponentially fast to $\bar V$ when time goes to $\infty.$
   \smallbreak 
    In   most physical situations that may be  modelled by systems of the form \eqref{GEQSYM} however, 
    some components of the solution  satisfy conservation laws 
    and only \emph{partial} dissipation occurs, that is to say, the term $H(V)$ acts only 
    on a part of the solution. 
Typically, this happens in  gas dynamics  where the mass density and entropy 
are conserved,  or in numerical schemes  involving conservation laws with  relaxation.
 A well known example is the damped compressible Euler system for isentropic flows that will be addressed at the end
of the paper. For this system, it is known from the works of Wang and Tang \cite{Wang}  or  Sideris, Thomases and Wang \cite{Sideris} that the dissipative mechanism, albeit  only present in the velocity equation, can prevent the formation of singularities that would occur for $H\equiv0.$
\smallbreak
Looking for sufficient conditions on the dissipation term $H$  guaranteeing  the global existence of strong solutions for perturbations 
of a constant state $\bar V$  goes back to the thesis of Kawashima \cite{Kawa1} and to the more recent  work  by  Yong in \cite{Yong}.
Two main conditions arise. The first one is the so-called  (SK)(for \textit{Shizuta-Kawashima}) stability condition, see \cite{SK}, that ensures that the damping is strong enough to prevent the solutions emanating  from small perturbations of  $\bar V$ 
from blowing up. The second one is the existence of a (dissipative) entropy which provides a suitable symmetrisation of the system compatible with  $H$.
Thanks  to  those two conditions,  Yong \cite{Yong} obtained a global existence result  for systems that 
are more general than those that have been  considered by Kawashima. 

More recently, by taking advantage of the properties of the Green kernel of the linearized system around $\bar V$ and on the Duhamel formula,  Bianchini, Hanouzet and Natalini in \cite{BHN} pointed  out the 
convergence in $L^p$ of global solutions to $\bar V,$   with the rate ${\mathcal O}(t^{-\frac{d}{2}(1-\frac{1}{p})})$ when $t\rightarrow+\infty,$  for all $p\in[\min\{d,2\},\infty]$.  Let us further mention that 
 Kawashima and Yong proved decay estimates in regular Sobolev space in \cite{KYDecay}.
 
 A few years ago,   Kawashima and Xu in \cite{XK1} and \cite{XK2} extended  the prior works
on partially dissipative hyperbolic systems satisfying the (SK) and entropy conditions to critical   non-homogeneous Besov spaces.
 To obtain their results, they used the symmetrisation from \cite{KY} and applied a frequency localisation argument relying on the Littlewood-Paley decomposition. In their work,  the equivalence between  Condition (SK) and the existence of a compensating function allows  to exhibit   the global-in-time $L^2$ integrability properties of all the components of the solution. 
 \medbreak
  However, it is known that the condition (SK) is not optimal in the sense that there exist many systems that do not verify it but for which one can prove  global well-posedness results, see e.g. \cite{PengWangNoSk,BianchiniNataliniNoSk,HanouzetCarbou}.
 In \cite{BZ},  Beauchard and Zuazua  developed a new and systematic approach  that allows to establish global existence results and to describe large time behavior   of solutions to   partially dissipative systems that need not satisfy Condition (SK).
Looking at  the linearization of  System \eqref{GEQSYM} around a constant solution, namely
(denoting from now on $\d_t\triangleq \frac{\partial}{\partial t}$ and $\d_j\triangleq \frac{\partial}{\partial x_j}$),
\begin{equation} \label{LinearBZ}
\d_tZ+\sum_{j=1}^mA^j\d_jZ=-LZ,
\end{equation}
they show that Condition (SK) is equivalent to the Kalman maximal rank condition on the matrices $A^j$ and $L.$
More importantly, they  introduce a Lyapunov functional 
equivalent to the $L^2$ norm 
 that encodes enough information to recover dissipative properties of \eqref{LinearBZ}.  
 Considering such a functional  is motivated by the classical (linear) control theory of ODEs, 
 and is also related to  Villani's paper \cite{Villani}.
 Back to the nonlinear system \eqref{GEQSYM}, Beauchard and Zuazua  obtained  the existence
of global smooth solutions  for perturbations of a constant equilibrium $\bar V$
that satisfies (SK) Condition.  Furthermore, using arguments borrowed from  Coron’s return method \cite{Coron},
they were able to  achieve  certain cases where (SK)  does not hold.
\bigbreak
Our aim here is to extend the results we obtained recently  in the one-dimensional case \cite{CBD1} 
to multi-dimensional partially dissipative hyperbolic systems (see also the on-going work \cite{BCBT}
by the first author dedicated to  the relaxation limit of a non conservative multi-fluid system that does not satisfy the (SK) condition).
More precisely, under Condition (SK), we shall develop   Beauchard and Zuazua's approach as 
suggested  by the second author  in \cite{Handbook}  and prove the global well-posedness
of  \eqref{GEQSYM} supplemented with data that are close to $\bar V$ in 
an optimal critical regularity setting.  
As in the study of  the compressible Navier-Stokes system and related models (see e.g.   \cite{NSCLP, CMZ, NSCL2,HeDanchin})
it will appear naturally  that in order to get optimal results, one has to use functional 
 spaces with different regularity exponents in low and high  frequencies. 
Here, Beauchard and Zuazua's approach will give us the information 
that  the low frequencies (resp. high frequencies) of the solution of the linearized system behave like the heat flow (resp. are exponentially damped). Furthermore, in order to improve our low frequency analysis, we   will 
 exhibit a damped mode with better decay properties than the whole solution. Thanks to that, we will end up with  more accurate estimates and a weaker smallness condition that in prior works (in particular \cite{XK1})
and refine the decay estimates that were obtained in \cite{XK2}. 
\bigbreak

The paper is arranged as follows. In the first section, we specify the structure of the class  of partially dissipative hyperbolic systems we aim at considering, 
and explain the construction of a Lyapunov functional that will be the key to our global results. 
In Section \ref{s:main}, we  state  the main results of the paper.   
Section  \ref{s:proof}   is devoted to the proof of  a first  global existence
result and time decay estimates for general partially dissipative systems satisfying the Shizuta-Kawashima condition. 
 In section \ref{s:app},  under additional structure assumptions (that are satisfied by  
 the compressible Euler system with damping), we obtain a more accurate global 
 existence result. Some technical results are proved or recalled in Appendix.


\section{Hypotheses and method} \label{s:hyp}

In this section,  we specify our assumptions  on the system under consideration, and 
explain the main steps of our approach. 

\subsection{Friedrichs-symmetrizability}\label{ss:structure} 
 
First, we fix some constant solution $\bar V\in\cO_V$ of \eqref{GEQSYM} (thus satisfying $H(\bar V)=0$).
To ensure the local well-posedness, we  assume that   \eqref{GEQSYM} 
is  \emph{Friedrichs-symmetrizable},  namely
 that there exists a smooth function $S:V\mapsto S(V)$
defined on  $\cO_{V},$ valued in the set of symmetric and positively definite matrices such that for all $V\in\cO_{V},$ the matrices $(SA^0)(V),\cdots, (SA^d)(V)$ are symmetric and, in addition, 
$(SA^0(V))$ is definite positive. 
\medbreak
 Denoting  $\wt H\triangleq SH$ and
  $\wt A^j\triangleq SA^j$   for $j\in\{0,\cdots,\},$  System \eqref{GEQSYM} rewrites 
$$\wt A^0(V)\partial_tV + \sum_{j=1}^d\wt A^j(V)\partial_{j}V=\wt H(V).$$
Then, setting $Z\triangleq V-\bar V,$    $L\triangleq-D_V\wt H(\bar V)$ and $r(Z)\triangleq \wt H(\bar V+Z)+LZ,$  we get 
  \begin{equation}\label{eq:Z}
  \wt A^0(V)\d_tZ+\sum_{j=1}^d \wt A^j(V)\d_jZ+\mathds{L}Z=r(Z).\end{equation}
 By construction,  the remainder $r$ is at least quadratic with respect to $Z.$ 
\medbreak

Second, 
 we assume that 
System \eqref{GEQSYM} is \emph{partially dissipative} in the following meaning:
\begin{enumerate}
\item[(i)] The whole space  $\R^n$ may be decomposed into  $\mathbb{R}^n=\mathcal{M}\bigoplus\mathcal{M}^\perp$
where
$$\mathcal{M}=\bigl\{\phi\in\mathbb{R}^n, \,\: \langle\phi,\wt H(V)\rangle=0 \text{  for all } V\in\cO_{V}\}\cdotp$$ 
Hence, denoting by $\mathcal{P}$  the orthogonal projection on $\mathcal{M},$  we may write
\begin{equation}\label{eq:decompo} 
V=\left(\begin{matrix}V_1 \\ V_2 \end{matrix}\right)
\andf H(V)=\left(\begin{matrix} 0\\H_2(V) \end{matrix}\right) \end{equation} where $V_1=\mathcal{P}V\in\mathbb{R}^{n_1}$, $V_2=(I-\mathcal{P})V\in\mathbb{R}^{n_2}$ and  $n_1+n_2=n.$
\smallbreak\item[(ii)]  The linear map   $L\triangleq-D_V\wt H(\bar V)$
 is an isomorphism on $\mathcal{M}^{\perp}$  such that for some $c>0,$ 
  \begin{equation}\label{partdissip2}
\forall \eta\in \R^n,\; (L\eta|\eta)\geq c |L\eta|^2.
\end{equation}
\item[(iii)]    System \eqref{GEQSYM}  has  a \emph{block structure} that is compatible with 
decomposition \eqref{eq:decompo},  namely  all the matrices $\wt A^j$ are diagonal by blocks 
(first block being of size $n_1\times n_1$ and second one of size $n_2\times n_2$) 
and  we have $r(Z_1,0)=0$ for all $Z_1$ close to $0.$
This  entails that  $r$  is at least linear with respect to $Z_2.$ 
\end{enumerate}
  \smallbreak
According to the above assumptions and introducing the decompositions:
$$ \wt A^0=\left(\begin{matrix} \wt A^0_{1,1} & 0\\0 & \wt A^0_{2,2} \end{matrix}\right),\quad\wt A^j=\left(\begin{matrix} \wt A^j_{1,1} & \wt A^j_{1,2}\\\wt A^j_{2,1} & \wt A^j_{2,2} \end{matrix}\right),\quad
 L=\begin{pmatrix} 0\\ \mathds{L}\end{pmatrix}
\andf  r=\begin{pmatrix} 0\\ Q\end{pmatrix},$$
 System \eqref{eq:Z} may thus be rewritten   as:
\begin{equation} \left\{ \begin{matrix}\displaystyle \wt A^0_{1,1}(V)\partial_tZ_1 + \sum_{j=1}^d\left(\wt A_{1,1}^j(V)\partial_{j}Z_1+\wt A_{1,2}^j(V)\partial_{j}Z_2\right)=0,\\ \displaystyle \wt A^0_{2,2}(V)\partial_tZ_2 + \sum_{j=1}^d\left(\wt A_{2,1}^j(V)\partial_{j}Z_1+\wt A_{2,2}^j(V)\partial_{j}Z_2\right)+\mathds{L} Z_2=Q(Z). \end{matrix} \right. \label{GE}
\end{equation} 

   As we shall see in Section \ref{s:app}, the compressible Euler equations with damping, rewritten
 in suitable variables, satisfies the above assumptions about any constant state with positive density and null velocity.

\subsection{The Shizuta-Kawashima and Kalman rank conditions}

In order to specify  the  supplementary conditions on the structure 
of the system ensuring  global well-posedness and present the overall strategy, let
us consider  the linearization  of \eqref{GEQSYM} about $\bar V,$ namely:
   \begin{equation}\label{eq:Zlinear}\bar A^0\d_tZ+\sum_{j=1}^d\bar A^j \d_jZ + LZ=G \with  \bar A^j:=\wt A^j(\bar V)\ \hbox{ for }\ j=0,\cdots,d.\end{equation}
  Then,   owing to the symmetry of the matrices $\bar A^j,$  the classical energy method leads to 
  \begin{equation}\label{eq:ZZ}
   \frac 12\frac {d}{dt}\|Z\|_{L^2_{\bar A_0}}^2 + (LZ|Z) =0 \with \|Z\|_{L^2_{\bar A_0}}^2\triangleq (\bar A_0 Z|Z).\end{equation}
  On the one hand, since the matrix $\bar A_0$ is symmetric and positive definite, we have  
  \begin{equation}   \|Z\|_{L^2_{\bar A_0}}\simeq   \|Z\|_{L^2}^2.\end{equation}
 On the other hand, \eqref{partdissip2} and  the definition of $Z_2$ guarantee that
 there exists $\kappa_0>0$ such that
 \begin{equation}\label{eq:LZ}    (LZ|Z) \geq \kappa_0\|Z_2\|_{L^2}^2\quad\hbox{for all }
 \ Z\in L^2(\R^d;\R^n).\end{equation}
 Hence, \eqref{eq:ZZ} yields $L^2$-in-time integrability on the components of $Z$ experiencing 
 direct dissipation, but not on the whole solution.
 To compensate this lack of coercivity, following Beauchard and Zuazua in \cite{BZ}, we are going to introduce
  a lower order corrector $\cI$ to  track the optimal dissipation  of  the solution to \eqref{eq:Zlinear}. 
  Since it is more natural  to define that corrector  on the Fourier side,  let us  look at 
  \eqref{eq:Zlinear}  in the Fourier space, that is, denoting by $\xi\in\R^d$ the Fourier variable, 
  $$\bar A^0\d_t\wh Z+i\sum_{j=1}^d\bar A^j \xi_j\wh Z + L\wh Z=\wh G.$$
  Let us write  $\xi= \rho \omega$ with $\omega\in\S^{d-1}$ and $\rho=|\xi|.$ Then, the above system rewrites
  \begin{equation}\label{eq:Zomega}
  \d_t\wh Z +i\rho M_\omega \wh Z +N\wh Z = \bar A_0^{-1}\wh G\with 
  M_\omega\triangleq \bar A_0^{-1}\sum_{j=1}^d \omega_j\bar A^j\andf  
  N\triangleq \bar A_0^{-1} L. 
  \end{equation}
  Clearly, since $\bar A_0^{-1}$ is positive definite, \eqref{partdissip2} implies that there
  exists a positive constant (still denoted by $\kappa_0$) so that 
    \begin{equation}\label{partdissip3}
\forall \eta\in \R^n,\; (N\eta|\eta)\geq \kappa_0 |N\eta|^2.
\end{equation}
  Fix $n-1$ positive parameters $\e_1,\cdots \e_{n-1}$ (bound to be small), and set
  \begin{equation}\label{def:I}
  \cI\triangleq \Re \sum_{k=1}^{n-1} \e_k \bigl(NM_\omega^{k-1}\wh Z\cdot N M_\omega^k\wh Z\bigr)
  \end{equation}
  where $\cdot$ designates  the Hermitian scalar product in $\C^n.$ 
  \smallbreak
  For expository purpose, assume that $G\equiv0.$ Then, differentiating $\cI$ with respect to time and
   using \eqref{eq:Zomega} yields
  \begin{multline}\label{eq:14}
  \frac d{dt}\cI + \sum_{k=1}^{n-1} \e_k\rho |NM_\omega^k\wh Z|^2 
  = -\Im\sum_{k=1}^{n-1} \e_k\bigl(NM_\omega^{k-1} N\wh Z\cdot N M_\omega^k\wh Z\bigr)
  \\+\Re \sum_{k=1}^{n-1} \e_k \rho \bigl(NM_\omega^{k-1}\wh Z\cdotp NM_\omega^{k+1}\wh Z\bigr)
  -\Im\sum_{k=1}^{n-1} \e_k \bigl(NM^{k-1}_\omega\wh Z\cdotp NM_\omega^k N\wh Z\bigr)\cdotp
\end{multline}
As pointed out in \cite{BZ} (and recalled in Appendix for the reader's convenience), it is possible to choose
positive  and arbitrarily small parameters $\e_1,\cdots, \e_{n-1}$  so that \eqref{eq:14} implies for some $C>0,$
\begin{equation}\label{eq:I}
\frac d{dt} \cI +\frac12\sum_{k=1}^{n-1} \e_k\rho  |NM_\omega^k\wh Z|^2 \leq\frac {\kappa_0}{2(2\pi)^d\rho}|N\wh Z|^2
+C\e_1|N\wh Z|^2.
\end{equation}
Setting $\e_0=(2\pi)^{-d}\kappa_0/2,$ taking $\e_1$ small enough,  
integrating on $\R^d,$ using Fourier-Plancherel theorem and combining with \eqref{eq:ZZ},  we end up with 
\begin{multline}\label{eq:ZZ1}
\frac d{dt}\cL +\cH \leq 0\with \cH\triangleq\int_{\R^d} \sum_{k=0}^{n-1} \e_k \min(1,|\xi|^2) |NM_\omega^k\wh Z(\xi)|^2\,d\xi \\\andf
\cL\triangleq \|Z\|_{L^2_{\bar A_0}}^2 + \int_{\R^d} \min(|\xi|,|\xi|^{-1})\cI(\xi)\,d\xi.\end{multline}
Clearly,  if $\e_1,\cdots,\e_{n-1}$ are small enough, then $\cL\simeq \|Z\|_{L^2}^2.$ The question now is whether $\cH$ 
may be compared to   $\|Z\|_{L^2}^2.$ The answer depends on the properties of 
the support of $\wh Z_0$  and on the  possible cancellation of the following quantity:
\begin{equation}\label{eq:Nomega}
\mathcal{N}_{\bar{V}}:=\inf\biggl\{\sum_{k=0}^{n-1}\varepsilon_k|NM_\omega^kx|^2;\;x\in\S^{n-1},\,\omega\in\S^{d-1}\biggr\}\cdotp\end{equation}
At this very point, the  (SK) (for Shizuta and Kawashima) condition comes into play:
\begin{Def}
System \eqref{GEQSYM} verifies the (SK) condition  at $\bar{V}\in\mathcal{M}$ if, for all $\omega\in\S^{d-1},$
 whenever   $\phi\in\mathbb{R}^n$ satisfies   $N\phi=0$  and 
 $\lambda  \phi+M_\omega\phi=0$ for some   $\lambda\in\mathbb{R},$ we  must have  $\phi=0$. 
\end{Def}
It is clear that Condition (SK)   at $\bar V$ is equivalent to:
$$\forall \omega\in\mathbb{S}^{d-1},\:\:\: \ker N\cap\{\text{eigenvectors of  }M_\omega\}=\{0\}. $$
In order to pursue our analysis, we  need the following key result (see the proof in e.g. \cite{BZ}). 
\begin{Prop}
Let $M$ and $N$ be two matrices in  $\cM_n(\R).$
The following assertions are equivalent:
\begin{enumerate}
\item $N\phi=0$ and $\lambda \phi+ M\phi=0$ for some $\lambda\in\R$
implies $\phi=0$;
\item For every $\e_0,\dotsm,\e_{n-1}>0$, the function 
$$y\longmapsto\sqrt{\biggl(\sum_{k=0}^{n-1}\e_k|NM^ky|^2\biggr)}$$
defines a norm on $\mathbb{\R}^n$;
\end{enumerate}
\end{Prop}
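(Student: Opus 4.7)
The plan is to recognize that both (1) and (2) express the single algebraic condition that the subspace
$$W:=\bigcap_{k=0}^{n-1}\ker(NM^k)$$
reduces to $\{0\}$. That (2) is equivalent to $W=\{0\}$ is essentially formal: the map $y\mapsto\bigl(\sqrt{\e_k}\,NM^ky\bigr)_{k=0}^{n-1}$ is linear from $\R^n$ into a Euclidean space, so its $\ell^2$-norm is automatically a seminorm, and since all $\e_k$ are strictly positive it is definite precisely when this linear map is injective, i.e.\ precisely when $W=\{0\}$.

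I would next record two structural properties of $W$ obtained from the Cayley--Hamilton theorem, which writes $M^n$ as a real linear combination of $I,M,\dots,M^{n-1}$. First, by induction on $k$, one gets $NM^kv=0$ for every $k\ge0$ whenever $v\in W$, so $W=\bigcap_{k\ge0}\ker(NM^k)$. Second, $W$ is stable under $M$: for $v\in W$ and $0\le k\le n-1$, $NM^k(Mv)=NM^{k+1}v$ vanishes, using $NM^nv=0$ for the case $k=n-1$.

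The implication (2) $\Rightarrow$ (1) is then immediate: if $N\phi=0$ and $M\phi=-\lambda\phi$, then $NM^k\phi=(-\lambda)^k N\phi=0$ for every $k\ge0$, whence $\phi\in W=\{0\}$. For the reverse implication, I would argue by contraposition. Assume $W\neq\{0\}$: since $W$ is a nonzero finite-dimensional $M$-invariant subspace, the restriction $M|_W$ admits at least one eigenvector $\phi\neq 0$ with some eigenvalue $-\lambda$, and the inclusion $W\subset\ker N$ forces $N\phi=0$, contradicting (1).

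The delicate point in this last step is that a real endomorphism of a real vector space need not have a real eigenvector, so the eigenvalue $-\lambda$ produced by $M|_W$ may a priori be complex. I expect this to be the main obstacle, to be handled either by interpreting (1) in the complexified sense (the usual Hautus/PBH formulation) or by exploiting the structure at hand: in the application $M=M_\omega=\bar A_0^{-1}\sum_{j=1}^d\omega_j\bar A^j$ is conjugate, via $\bar A_0^{1/2}$, to a symmetric matrix, hence possesses a real eigenbasis, and the obstruction disappears.
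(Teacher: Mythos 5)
Your route via the Kalman unobservable subspace $W=\bigcap_{k=0}^{n-1}\ker(NM^k)$ is the standard one, and the reductions you carry out are all correct: (2) is equivalent to $W=\{0\}$ because the map $y\mapsto(\sqrt{\e_k}\,NM^ky)_k$ is linear and hence always yields a seminorm, and Cayley--Hamilton shows both that $W=\bigcap_{k\geq0}\ker(NM^k)$ and that $W$ is an $M$-invariant subspace of $\ker N$. The paper does not supply its own argument here (it defers to \cite{BZ}), so there is nothing in the text to compare against line by line; the approach you chose is the natural one and almost certainly the one in the cited reference.

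However, the obstacle you flag at the end and then leave open is not a formality: as the proposition is literally written, with $\lambda\in\R$ and $\phi\in\R^n$, the implication (1)$\Rightarrow$(2) is \emph{false} for general real $M,N$. Take $n=2$, $N=0$ and $M=\begin{pmatrix}0&-1\\1&0\end{pmatrix}$: then no nonzero real $\phi$ satisfies $M\phi=-\lambda\phi$ for a real $\lambda$, so (1) holds vacuously, yet $NM^ky\equiv0$ for all $y$ and (2) fails. So the contraposition step really can break --- $M|_W$ need not possess a real eigenvector --- and it cannot be waved away. A complete proof must carry out one of the two fixes you only sketch: either state and prove (1) over $\C$ (the Hautus/PBH test), in which case the eigenvector you extract from $W\otimes\C$ lies in $\ker N\otimes\C$ and the argument closes cleanly; or retain the real formulation but impose that $M$ be $\R$-diagonalizable, which holds for the matrices $M_\omega=\bar A_0^{-1}\sum_j\omega_j\bar A^j$ appearing here (they are conjugate to symmetric matrices via $\bar A_0^{1/2}$), so that the invariant subspace $W$ inherits a real eigenvector. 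Identifying these two escape routes is the right instinct, but as submitted the proposal leaves the hard direction unfinished and, taken against the statement exactly as printed, a genuine counterexample standing.
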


Thanks to the above proposition and observing that the unit sphere $\mathbb{S}^{d-1}$ is compact, one may   conclude that 
Condition (SK)  is satisfied by the pair $(M_\omega,N)$ for all $\omega\in\S^{d-1}$ if 
and only if $\cN_{\bar{V}}>0.$ Furthermore, we note that:
\begin{itemize}
\item if $\wh Z_0$ is compactly supported then,  $\cH\gtrsim \|\nabla Z\|_{L^2}^2,$
which reveals a parabolic behavior of all components of the solution;
\item if the support of $\wh Z_0$ is away from the origin, then   $\cH\gtrsim \|Z\|_{L^2}^2,$
which corresponds to exponential decay. 
\end{itemize}
Therefore,  at the linear level,  in order to get optimal dissipative estimates, it is suitable to split the 
solution into low and high frequencies parts. 
This will actually be achieved by means of a  Littlewood-Paley decomposition
(introduced in the next section). 
Then, a great part of our analysis will consist in localizing \eqref{eq:Z} on the Fourier 
side by means of this decomposition, and to study the evolution of the
functional $\cL$  pertaining to  each part.

\subsection{The damped mode}

Another important ingredient of our analysis is the use of a `damped mode' that, somehow, 
may be seen as an eigenmode  corresponding  to the part of the solution that experiences
maximal dissipation in low frequencies. 
It is defined as follows~:
\begin{equation}\label{def:WW}
W\triangleq -\mathds{L}^{-1}\wt A^0_{2,2}(V)\d_tZ_2=Z_2+\sum_{j=1}^d \mathds{L}^{-1}\bigl(\wt A^j_{2,1}(V)\d_jZ_1+\wt A_{2,2}^j(V)\d_jZ_2\bigr)-\mathds{L}^{-1}Q(Z)\cdotp
\end{equation}
Note that
\begin{multline}\label{eq:W}\wt A^0_{2,2}(V)\d_tW+\mathds{L} W =\wt A^0_{2,2}(V)\mathds{L}^{-1}\sum_{j=1}^d\d_t\bigl(\wt A^j_{2,1}(V)\d_{j}Z_1+\wt A_{2,2}^j(V)\d_{j}Z_2\bigr)\\-\wt A^0_{2,2}(V)\mathds{L}^{-1}\partial_t Q(Z)\cdotp\end{multline}
On the left-hand side, Property \eqref{partdissip2} ensures maximal dissipation on $W.$
As  the right-hand side of \eqref{eq:W} contains only 
at least quadratic terms, or linear terms with  one derivative, it can be expected to be  negligible in low frequencies
if $Z$ is small enough. 
Furthermore, \eqref{def:WW} reveals that  $W$ is comparable to $Z_2$ in low frequencies. 
This will ensure  better integrability for  $Z_2$ than for the whole solution $Z.$


\section{Main results}\label{s:main}

Before stating our main results, introducing a few notations is in order. 

First,  we fix a homogeneous  Littlewood-Paley decomposition $(\ddq)_{q\in\Z}$
that is defined  by 
$$\ddq\triangleq\varphi(2^{-q}D)\with \varphi(\xi)\triangleq \chi(\xi/2)-\chi(\xi)$$
where $\chi$ stands for a  smooth function  with range in $[0,1],$ supported in the open ball $B(0,4/3)$ and
such that $\chi\equiv1$ on the closed ball $\bar{B}(0,3/4).$ 
We further state 
$$\dot S_q\triangleq \chi(2^{-q}D) \quad\hbox{for all }\ q\in\Z$$
and define  $\mathcal{S}'_h$ to be the set of tempered distributions $z$  such 
that $$\lim_{q\to-\infty}\|\dot S_qz\|_{L^\infty}=0.$$ 
Following  \cite{HJR}, we introduce the  homogeneous Besov semi-norms:
$$\|z\|_{\dot\B^s_{p,r}}\triangleq \bigl\| 2^{qs}\|\ddq z\|_{L^p(\R^d)}\bigr\|_{\ell^r(\Z)},$$
then  define the homogeneous Besov spaces $\dot\B^s_{p,r}$ (for any $s\in\R$ and $(p,r)\in[1,\infty]^2$)
 to be the subset of $z$ in  $\mathcal{S}'_h$ such that $\|z\|_{\dot\B^s_{p,r}}$ is finite. 
\smallbreak
Using from now on the shorthand notation 
\begin{equation}\label{eq:zq}
\ddq z\triangleq z_q,
\end{equation}
we  associate to any element $z$ of $\mathcal{S}'_h,$  its low and 
high frequency parts through 
 $$ z^{\ell}\triangleq \sum_{q\leq 0}z_q= \dot S_{1}z\andf  z^h\triangleq\sum_{q>0}z_q=({\rm Id}-\dot S_{1})z.$$
 We shall constantly use the following  Besov semi-norms for low and high frequencies: 
$$ \displaylines{\norme{z}^{\ell}_{\dot{\mathbb{B}}^{s}_{2,1}}\triangleq \sum_{q\leq 0}2^{qs}\|z_q\|_{L^2} \andf
\norme{z}^{h}_{\dot{\mathbb{B}}^{s}_{2,1}}\triangleq \sum_{q>0}2^{qs}\|z_q\|_{L^2},\cr
\norme{z}^{\ell}_{\dot{\mathbb{B}}^{s}_{2,\infty}}\triangleq \sup_{q\leq 0}2^{qs}\|z_q\|_{L^2} \andf
\norme{z}^{h}_{\dot{\mathbb{B}}^{s}_{2,\infty}}\triangleq \sup_{q>0}2^{qs}\|z_q\|_{L^2}.}
$$ 
Throughout the paper, we shall use repeatedly the following obvious fact:
\begin{equation}\label{eq:comparaison}
\|z\|^\ell_{\dot\B^{s'}_{2,r}}\leq \|z\|^\ell_{\dot\B^s_{2,r}}\andf
\|z\|^h_{\dot\B^{s'}_{2,r}}\geq \|z\|^h_{\dot\B^s_{2,r}}\ \hbox{ for }\ r=1,\infty,
\quad\hbox{whenever }\  s\leq s'.\end{equation}
\medbreak
For any Banach space $X,$ index $\rho$ in $[1,\infty]$  and time $T\in[0,\infty],$ we use the notation 
$\|z\|_{L_T^\rho(X)}\triangleq  \bigl\| \|z\|_{X}\bigr\|_{L^\rho(0,T)}.$
If $T=+\infty$, then we  just  write $\|z\|_{L^\rho(X)}.$
Finally, in the case where $z$ has $n$ components $z_j$ in $X,$ we 
  keep the notation  $\norme{z}_X$ to 
mean $\sum_{j\in\{1,\cdots,n\}} \norme{z_j}_X$. 
\medbreak
We can now state our main global existence result for System \eqref{GEQSYM}, rewritten as \eqref{eq:Z}.
\begin{Thm}\label{ThmGlobal} Let $\bar{V}$ be an equilibrium state such that $H(\bar{V})=0$
and suppose that the structure  assumptions of paragraph \ref{ss:structure} and (SK) condition are satisfied. Then, there exists a positive constant $\alpha$ 
such that for all   $Z_0\in \dot{\mathbb{B}}^{\frac{d}{2}-1}_{2,1}\cap\dot{\mathbb{B}}^{\frac{d}{2}+1}_{2,1}$  satisfying 
 \begin{equation}\label{eq:smalldata}
\cZ_0\triangleq \norme{Z_0}^\ell_{\dot{\mathbb{B}}^{\frac{d}{2}-1}_{2,1}} + \norme{Z_0}^h_{\dot{\mathbb{B}}^{\frac{d}{2}+1}_{2,1}} \leq \alpha,
\end{equation} System \eqref{eq:Z}   supplemented 
with initial data $Z_0$ admits a unique global-in-time solution $Z$ in the space E defined by
$$ Z\in \mathcal{C}_b(\mathbb{R}^+;\dot{\mathbb{B}}^{\frac{d}{2}-1}_{2,1}\cap\dot{\mathbb{B}}^{\frac{d}{2}+1}_{2,1}),\;\;\; Z^h\in L^1(\mathbb{R}^+;\dot{\mathbb{B}}^{\frac{d}{2}+1}_{2,1}), \,\;\;\; Z_1^\ell\!\in\! L^1(\mathbb{R}^+;\dot{\mathbb{B}}^{\frac{d}{2}+1}_{2,1})\!\andf\! W\!\in\! L^1(\mathbb{R}^+;\dot{\mathbb{B}}^{\frac{d}{2}-1}_{2,1}),$$
with $W$  defined according to \eqref{def:WW}. 
\medbreak
Moreover,  there exists a Lyapunov functional 
that  is equivalent to $\|Z\|_{\dot\B^{\frac{d}{2}-1}_{2,1}\cap\dot\B^{\frac{d}{2}+1}_{2,1}},$  
and  a constant $C$ depending only 
on  the matrices $A^j$ and on $H$, such that
\begin{equation} \label{eq:X0}
\cZ(t)\leq C\cZ_0\quad\hbox{for all } t\geq 0\end{equation} where
\begin{multline}\label{eq:X}
\cZ(t)\triangleq\norme{Z}^\ell_{L^\infty_t(\dot{\mathbb{B}}^{\frac{d}{2}-1}_{2,1})}+\norme{Z}^h_{L^\infty_t(\dot{\mathbb{B}}^{\frac{d}{2}+1}_{2,1})} +\norme{Z}_{L^1_t(\dot{\mathbb{B}}^{\frac{d}{2}+1}_{2,1})}\\+\norme{W}^\ell_{L^1_t(\dot{\mathbb{B}}^{\frac{d}{2}-1}_{2,1})}
 +\norme{Z_2}^\ell_{L^1_t(\dot{\mathbb{B}}^{\frac{d}{2}}_{2,1})}
 +\norme{Z_2}^\ell_{L^2_t(\dot{\mathbb{B}}^{\frac{d}{2}-1}_{2,1})}.\end{multline}
\end{Thm}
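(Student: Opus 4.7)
The plan is to follow the standard Friedrichs--Kato scheme: build a local-in-time solution by a classical iteration in the hybrid Besov space $\dot\B^{d/2-1}_{2,1}\cap\dot\B^{d/2+1}_{2,1}$ using symmetrizability, prove a closed a priori estimate $\cZ(t)\le C\cZ_0$ on every time interval on which the solution exists and stays small, and then use a continuation/bootstrap argument to extend the solution globally. Uniqueness is proved by a difference estimate in a space with one derivative less. The content of the theorem is essentially the a priori estimate, so the outline below concentrates on it.

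\textbf{Frequency-localized Lyapunov estimate.} I apply $\ddq$ to the perturbation equation \eqref{eq:Z}. In high frequencies I freeze the coefficients at $\bar V$ and treat the difference $\wt A^j(V)-\wt A^j(\bar V)$ via commutator estimates of Bony/Meyer type; in low frequencies the zero-order operator $\mathds{L}$ dominates. Writing $Z_q\triangleq\ddq Z$, I work in the Fourier space on a dyadic shell $|\xi|\sim 2^q$ and build, block by block, the analogue of the functional $\cL_q$ from the introduction, i.e.\ $\|Z_q\|^2_{L^2_{\bar A^0}}$ plus a small multiple of $\int \min(2^q,2^{-q})\,\cI_q(\xi)\,d\xi$ with $\cI_q$ defined via \eqref{def:I}. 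Inequality \eqref{eq:ZZ1}, together with (SK) and the compactness lower bound $\cN_{\bar V}>0$, gives
\begin{equation*}
\frac{d}{dt}\cL_q + c\min(2^{2q},1)\|Z_q\|_{L^2}^2 \le \|F_q\|_{L^2}\|Z_q\|_{L^2},
\end{equation*}
where $F_q$ collects the commutators and the quadratic remainder. Dividing by $\sqrt{\cL_q}\simeq\|Z_q\|_{L^2}$, multiplying by the suitable Besov weight $2^{q(d/2-1)}$ in low frequencies and $2^{q(d/2+1)}$ in high frequencies, and summing over $q$ produces the first three norms of $\cZ(t)$ controlled by $\cZ_0$ plus nonlinear forcing.

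\textbf{Damped mode and improved low-frequency estimates on $Z_2$.} To gain the extra $L^1_t(\dot\B^{d/2}_{2,1})^\ell$ integrability on $Z_2$ promised in \eqref{eq:X}, I use the damped mode $W$ defined in \eqref{def:WW}. Equation \eqref{eq:W} is an ODE in $t$ with coercive zero-order term $\mathds{L}$ on the left, and a right-hand side made only of (space-)differentiated or quadratic terms. A standard $L^2$-energy estimate at the frequency-localized level, after multiplying by $2^{q(d/2-1)}$ and summing over $q\le 0$, yields a bound on $\|W\|^\ell_{L^1_t(\dot\B^{d/2-1}_{2,1})}$ in terms of $\cZ(t)^2$ and the already controlled linear part. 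Since \eqref{def:WW} shows $W-Z_2$ is one derivative smoother or quadratic, this transfers to the two low-frequency norms of $Z_2$ in \eqref{eq:X} by interpolation with the high-frequency control coming from the first step.

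\textbf{Closing the nonlinear bootstrap.} The remainders $r(Z)$ and the variable-coefficient commutators $[\wt A^j(V)-\bar A^j]\d_j Z_q$ are estimated in hybrid Besov norms by the paraproduct/remainder decomposition and composition estimates for smooth functions vanishing at $0$ (recalled in the appendix). Because the nonlinear terms are at least quadratic in $Z$, or linear in $Z_2$ with a derivative, every such term is bounded by $C\cZ(t)^2$. Putting the three ingredients together gives
\begin{equation*}
\cZ(t)\le C\,\cZ_0 + C\,\cZ(t)^2,
\end{equation*}
and a standard bootstrap then yields \eqref{eq:X0} as long as $\alpha$ is chosen small enough.

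\textbf{Main obstacle.} The delicate point is to prove the $L^1_t$-in-time integrability of $Z$ at the \emph{critical} regularity in low frequencies without losing any derivative in the nonlinear estimates. Naively combining $\|Z_2\|^\ell_{L^1_t(\dot\B^{d/2}_{2,1})}$ with $\|Z_1\|^\ell_{L^1_t(\dot\B^{d/2+1}_{2,1})}$ through \eqref{eq:W} produces a scaling mismatch, and only the interplay with the damped mode $W$, which absorbs the extra derivative on $Z_2$ needed to get the parabolic $\dot\B^{d/2+1}_{2,1}$ integrability on the full solution, resolves it. Producing a Lyapunov functional that is genuinely equivalent to $\|Z\|_{\dot\B^{d/2-1}_{2,1}\cap\dot\B^{d/2+1}_{2,1}}$ and simultaneously couples these mechanisms — with constants uniform in the frequency $q$ — is where most of the technical work lies.
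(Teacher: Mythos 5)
Your outline identifies the right three ingredients: a frequency-localized Lyapunov functional in the Beauchard--Zuazua style, the damped mode $W$ to upgrade the low-frequency information on $Z_2$, and a bootstrap; this is indeed the paper's route. But there is a genuine gap in your high-frequency treatment. You propose to ``freeze the coefficients at $\bar V$ and treat the difference $\wt A^j(V)-\wt A^j(\bar V)$ via commutator estimates.'' Localizing the equation with the constant-coefficient principal part gives the source term $G_1=-\sum_j\bar A^0\bigl((\wt A^0(V))^{-1}\wt A^j(V)-(\bar A^0)^{-1}\bar A^j\bigr)\partial_jZ$. Writing $\ddq\bigl[(\wt A^j(V)-\bar A^j)\partial_jZ\bigr]=(\wt A^j(V)-\bar A^j)\partial_jZ_q+[\ddq,\wt A^j(V)-\bar A^j]\partial_jZ$, only the second piece is a commutator gaining one derivative; the first piece costs $\|\nabla Z_q\|_{L^2}\sim 2^q\|Z_q\|_{L^2}$ and cannot be absorbed by the flat exponential damping $\kappa_0\|Z_q\|_{L^2}^2$ when $q\to+\infty$, even after extracting a small factor $\|Z\|_{L^\infty}$. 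The paper avoids this by keeping the variable coefficients in the localized equation, $\wt A^0(V)\d_t Z_q+\sum_j\wt A^j(V)\partial_jZ_q+LZ_q=R^1_q+R^2_q+\ddq r(Z)$, using the symmetry of $\wt A^j(V)$ to integrate by parts (so the offending first-order term turns into $\sum_j\d_j(\wt A^j(V))Z_q\cdot Z_q$, which is now zero-order), and, correspondingly, using the \emph{variable} weight $\|Z_q\|^2_{L^2_{\wt A^0(V)}}$ in the high-frequency Lyapunov functional \eqref{eq:LqHF}; this introduces the commutator $R^2_q=[\wt A^0(V),\ddq]\d_tZ$, handled via Lemma \ref{dtZ}. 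Your plan as written does not contain this step and will not close in high frequency.

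Two smaller points. First, once you turn to $W$, you need to run Proposition \ref{PropW} at two distinct regularity levels ($\sigma=d/2$ and $\sigma=d/2-1$) and build the combined Lyapunov functional $\wt\cL=\cL+\e\,\cW^{d/2}+\e'\,\cW^{d/2-1}$ with hierarchically chosen $\e,\e'$; a single level does not absorb all the cross terms, in particular the problematic $\int\|Z_2\|^{\ell}_{\dot\B^{d/2-1}_{2,1}}\|Z\|_{\dot\B^{d/2}_{2,1}}$ in \eqref{eq:cL}. This choice of two scales is a load-bearing piece of the argument, not a decoration. Second, your construction scheme (Friedrichs--Kato iteration directly in the hybrid homogeneous space) differs from the paper's, which truncates the low frequencies so as to place the data in the \emph{nonhomogeneous} space $\B^{d/2+1}_{2,1}$ and then invokes the Kawashima--Xu local well-posedness and blow-up criterion (Proposition \ref{ExistLocXK}) before passing to the limit via a stability estimate in $\dot\B^{d/2}_{2,1}$. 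Your alternative is plausible but requires a local theory directly at critical homogeneous regularity, which you should either supply or sidestep as the paper does.
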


\begin{Rmq}
As is, the above theorem does not
extend to  the case $d=1.$
The reason why is that the low frequency regularity index then 
becomes negative, so that some nonlinear terms cannot be bounded in the proper spaces. 
For more details, the reader may refer to \cite{CBD1}.
\end{Rmq}
Our second result concerns the time-decay estimates of the solution we constructed in Theorem \ref{ThmGlobal}.
 \begin{Thm} \label{ThmDecay} Under the hypotheses of  Theorem \ref{ThmGlobal} 
 and if,  additionally,  $Z_0\in\dot{\mathbb{B}}^{-\sigma_1}_{2,\infty}$ for some
$\sigma_1\in\left]-\frac{d}{2},\frac{d}{2}\right]$ then, there exists 
a constant $C$ depending only on $\sigma_1$ and such that 
\begin{equation}\label{eq:Zs1}\norme{Z(t)}_{\dot{\mathbb{B}}^{-\sigma_1}_{2,\infty}}\leq C\norme{Z_0}_{\dot{\mathbb{B}}^{-\sigma_1}_{2,\infty}},\quad\forall t\geq 0. \end{equation}
Furthermore,  if $\sigma_1>1-d/2$ then, denoting 
$$\langle  t \rangle\triangleq\sqrt{1+t^2},\quad \alpha_1\triangleq \frac{\sigma_1+\frac{d}{2}-1}{2}
\andf C_0\triangleq 
\|Z_0\|^\ell_{\dot{\mathbb{B}}^{-\sigma_1}_{2,\infty}} + 
\|Z_0\|^h_{\dot{\mathbb{B}}^{\frac d2+1}_{2,1}},$$
we have the following decay estimates: 
$$
\begin{aligned}
\sup_{t\geq0} \norme{\langle t \rangle^{\frac{\sigma+\sigma_1}2}Z(t)}^{\ell}_{\dot{\mathbb{B}}^{\sigma}_{2,1}}&\leq CC_0\  \text{ if } \  -\sigma_1<\sigma\leq d/2-1,\\ 
\sup_{t\geq0}\norme{\langle t\rangle^{\frac{\sigma+\sigma_1}2+\frac12}
Z_2(t)}^\ell_{\dot{\mathbb{B}}^{\sigma}_{2,1}}&\leq CC_0 \   \text{ if } \  -\sigma_1<\sigma\leq d/2-2,\\
\sup_{t\geq0}\norme{\langle t\rangle^{\alpha_1}
Z_2(t)}^\ell_{\dot{\mathbb{B}}^{\sigma}_{2,1}}&\leq CC_0  \   \text{ if }\  \min(d/2-2,-\sigma_1)< \sigma\leq d/2-1\\ 
 \andf \sup_{t\geq0}\norme{\langle t\rangle^{2\alpha_1}Z(t)}^h_{\dot{\mathbb{B}}^{\frac{d}{2}+1}_{2,1}}&\leq C C_0. \end{aligned}$$
\end{Thm}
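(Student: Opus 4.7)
The plan is to first establish the preservation bound \eqref{eq:Zs1}, then bootstrap this negative-regularity information against the Lyapunov-type estimates of Theorem \ref{ThmGlobal} to recover each algebraic decay rate via time-weighted, frequency-by-frequency interpolation.

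First I would localise \eqref{eq:Z} by applying $\ddq$ and reproduce the Lyapunov analysis of Section \ref{s:hyp} at each dyadic block. This produces, with $\mathcal L_q\simeq\|Z_q\|_{L^2}^2$ uniformly in $q$, a differential inequality of the shape
$$\frac{d}{dt}\mathcal L_q + c\min(2^{2q},1)\|Z_q\|_{L^2}^2 \,\lesssim\, \|R_q\|_{L^2}\|Z_q\|_{L^2},$$
where $R_q$ gathers commutators plus the block of the remainder $r(Z)$. Multiplying by $2^{-2q\sigma_1}$, taking $\sup_q$, and bounding the nonlinear piece via the product law $\dot{\mathbb{B}}^{d/2+1}_{2,1}\times\dot{\mathbb{B}}^{-\sigma_1}_{2,\infty}\hookrightarrow\dot{\mathbb{B}}^{-\sigma_1}_{2,\infty}$ (whose validity is exactly the condition $\sigma_1\in\,]-d/2,d/2]$) yields
$$\sup_{s\leq t}\|Z(s)\|_{\dot{\mathbb{B}}^{-\sigma_1}_{2,\infty}} \leq C\|Z_0\|_{\dot{\mathbb{B}}^{-\sigma_1}_{2,\infty}} + C\,\mathcal Z(t)\sup_{s\leq t}\|Z(s)\|_{\dot{\mathbb{B}}^{-\sigma_1}_{2,\infty}},$$
and the smallness of $\mathcal Z(t)$ from Theorem \ref{ThmGlobal} closes \eqref{eq:Zs1}.

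Second, I would derive the decay in $\dot{\mathbb{B}}^\sigma_{2,1}$ from the Duhamel presentation $\mathcal L_q(t)\lesssim e^{-c\min(2^{2q},1)t}\mathcal L_q(0)+(\text{nonlinear})$. In low frequencies \eqref{eq:Zs1} gives the uniform bound $\|Z_q(t)\|_{L^2}\lesssim 2^{q\sigma_1}C_0$, hence
$$\|Z\|^\ell_{\dot{\mathbb{B}}^\sigma_{2,1}} \lesssim \sum_{q\leq 0}2^{q(\sigma+\sigma_1)}e^{-c2^{2q}t}C_0 \,\simeq\, C_0\,\langle t\rangle^{-(\sigma+\sigma_1)/2},$$
which gives the first line as soon as $\sigma+\sigma_1>0$. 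The additional $\langle t\rangle^{1/2}$ enhancement for $Z_2^\ell$ is obtained by substituting $Z_2$ with the damped mode $W$ from \eqref{def:WW}: by \eqref{eq:W}, $W$ is fully dissipated at every frequency and its source either carries an extra spatial derivative (producing a factor $2^q$ that improves the low-frequency integral above by $\langle t\rangle^{1/2}$) or is quadratic; since $Z_2-W$ is of lower order in low frequencies, the improved rate transfers to $Z_2$. The intermediate bound $\langle t\rangle^{\alpha_1}$ in the third line arises by interpolating between the previous two whenever $\sigma$ lies outside the range where the sharp $\langle t\rangle^{1/2}$ improvement is directly available. For the high-frequency estimate, the level-$q$ functional is exponentially damped uniformly in $q>0$; the nonlinear source is then split into low/high parts and controlled by the decay rates just derived, saturating at the rate $\langle t\rangle^{-2\alpha_1}$ dictated by the slowest low-frequency factor.

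The main obstacle is packaging all of the above into a single time-weighted functional $\mathcal D(t)$, whose components are precisely the four suprema appearing in the statement, and then verifying that every nonlinear contribution is bounded by $\mathcal Z(t)\mathcal D(t)+\mathcal D(t)^2$ after careful dyadic bookkeeping. When estimating $r(Z)$ in weighted norms, one has to systematically decompose each factor into its low- and high-frequency parts and match decay rates so that the total time weight on the right never exceeds that of the target norm on the left. The facts that $r$ is at least linear in $Z_2$ (hypothesis (iii)) and that $W$ carries one extra order of decay beyond $Z$ are what make it possible to absorb the slowest factors and close the bootstrap.
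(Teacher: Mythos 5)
Your first step, preserving the $\dot{\mathbb{B}}^{-\sigma_1}_{2,\infty}$ bound by localizing, writing the nonlinear term via the product law \eqref{eq:prod3}, and absorbing it thanks to the smallness of $\cZ$, is essentially the paper's argument (the paper phrases it as Gronwall after noting $\int_0^t\|(\nabla Z,Z_2)\|_{\dot\B^{d/2}_{2,1}}\lesssim\cZ_0$; these are equivalent). One small slip: the relevant law is $\dot\B^{d/2}_{2,1}\times\dot\B^{-\sigma_1}_{2,\infty}\hookrightarrow\dot\B^{-\sigma_1}_{2,\infty}$; the factor you put in $\dot\B^{d/2+1}_{2,1}$ is $Z$, but what multiplies $\dot\B^{-\sigma_1}_{2,\infty}$ is $\nabla Z\in\dot\B^{d/2}_{2,1}$. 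You should also, as the paper does, justify that $Z(t)$ actually belongs to $\dot\B^{-\sigma_1}_{2,\infty}$ (the paper propagates this along the approximating sequence).

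For the decay itself, however, you take a genuinely different route from the paper, and there is a real gap in it. You propose to write Duhamel's formula block by block, $\cL_q(t)\lesssim e^{-c\min(2^{2q},1)t}\cL_q(0)+(\text{nonlinear})$, apply the $2^{q\sigma_1}$ bound from \eqref{eq:Zs1} to $\cL_q(0)$, and sum. But the display $\|Z\|^\ell_{\dot\B^\sigma_{2,1}}\lesssim\sum_{q\leq 0}2^{q(\sigma+\sigma_1)}e^{-c2^{2q}t}C_0$ accounts only for the contribution of the initial data; the Duhamel convolution with the nonlinear source is simply dropped. You acknowledge this at the end ("the main obstacle is ... verifying that every nonlinear contribution is bounded by $\cZ(t)\cD(t)+\cD(t)^2$"), but that is precisely the step that needs to be carried out, and it is the core difficulty of a time-weighted bootstrap: each quadratic term must be decomposed into low/high parts whose time weights add up no worse than the target. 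The paper avoids this entirely by using the Xin--Xu device on the \emph{already closed} Lyapunov inequality of Proposition \ref{g:bound}: since $\frac{d}{dt}\wt\cL+c\,\wt\cH\leq 0$ (nonlinearities already absorbed), the interpolation $\|Z\|^\ell_{\dot\B^{d/2-1}_{2,1}}\lesssim\bigl(\|Z\|^\ell_{\dot\B^{-\sigma_1}_{2,\infty}}\bigr)^{\theta_0}\bigl(\|Z\|^\ell_{\dot\B^{d/2+1}_{2,1}}\bigr)^{1-\theta_0}$ combined with \eqref{eq:Zs1} yields a Riccati-type ODE $\frac{d}{dt}\wt\cL+c\,C_0^{-\theta_0/(1-\theta_0)}\wt\cL^{1/(1-\theta_0)}\leq 0$ that is integrated exactly; the decay rate at the endpoint $\sigma=d/2-1$ (and at $\sigma=d/2+1$ in high frequency) follows, and intermediate $\sigma$ come by interpolation. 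Your outline of the enhanced decay for $Z_2$ through $W$ and of the high-frequency estimate matches the spirit of the paper's third and fourth steps, but both rely on the first-line decay having been established, so the gap just described is not localized: it propagates through the rest of the argument.
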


\begin{Rmq}
Since we have the embedding $L^1\hookrightarrow\dot{\mathbb{B}}^{-\frac{d}{2}}_{2,\infty}$, the
above statement encompasses the   classical decay assumption $Z_0\in L^1$ 
(see e.g. \cite{MatsumuraNishida} in a slightly different context). 
\end{Rmq}
\begin{Rmq} 
Owing to the presence of "direct" dissipation in the equation of $Z_2,$ the decay of the low frequencies of  $Z_2$ is stronger 
by a factor $1/2$ than the decay of  the whole solution.  
\end{Rmq}
\medbreak
If we assume in addition that:
\begin{equation}\label{StructAssum}\left\{\begin{aligned}
&\hbox{For all}\ j\in\{1,\cdots,d\},\quad A^j_{1,1}(\bar V)=0\andf D_{V_1}A^j_{1,1}(\bar V)=0;\\
 &\hbox{For all}\  j\in\{1,\cdots,d\},\quad D_{V_1}A^j_{2,1}(\bar V)=0\ (\hbox{and thus 
 also  }\ D_{V_1}A^j_{1,2}(\bar V)=0);\\
&\hbox{The function}\ r\ \hbox{is quadratic with respect to } Z_2\ 
(\hbox{i.e.}\  D^2_{V_i,V_j}r(0)=0\ \hbox{for } (i,j)\not=(2,2)),
\end{aligned} \right.\end{equation}
then   one can weaken the low frequency assumption, as we did in 
our work \cite{CBD1} dedicated to one-dimensional case,  and get: 
 \begin{Thm}\label{Thmd2}  Let the assumptions of Theorem \ref{ThmGlobal} concerning system \eqref{GEQSYM} be 
 in force and assume in addition that \eqref{StructAssum} holds true. 
 
 Then, there exists a positive constant $\alpha$ 
such that for all   $Z_0\in \dot{\mathbb{B}}^{\frac{d}{2}}_{2,1}\cap\dot{\mathbb{B}}^{\frac{d}{2}+1}_{2,1}$  satisfying 
 \begin{equation}\label{eq:smalldatabis}
\cZ'_0\triangleq  \norme{Z_0}^\ell_{\dot{\mathbb{B}}^{\frac{d}{2}}_{2,1}} + \norme{Z_0}^h_{\dot{\mathbb{B}}^{\frac{d}{2}+1}_{2,1}} \leq \alpha,
\end{equation} System \eqref{eq:Z}  supplemented with initial data $Z_0$  admits a unique global-in-time solution $Z$ in the space $F$ defined by 
$$ Z\in \mathcal{C}_b(\mathbb{R}^+;\dot{\mathbb{B}}^{\frac{d}{2}}_{2,1}\cap\dot{\mathbb{B}}^{\frac{d}{2}+1}_{2,1}),\;\;\; Z^h\in L^1(\mathbb{R}^+;\dot{\mathbb{B}}^{\frac{d}{2}+1}_{2,1}), \,\;\;\; Z_1^\ell\!\in\! L^1(\mathbb{R}^+;\dot{\mathbb{B}}^{\frac{d}{2}+2}_{2,1})\!\andf\! W\!\in\! L^1(\mathbb{R}^+;\dot{\mathbb{B}}^{\frac{d}{2}}_{2,1}).$$
Moreover,   there exists a Lyapunov functional 
that  is equivalent to $\|Z\|_{\dot\B^{\frac{d}{2}}_{2,1}\cap\dot\B^{\frac{d}{2}+1}_{2,1}},$  and we have the following a priori estimate:
\begin{multline}\label{eq:Y}\cZ'(t)\leq C\cZ'_0
\quad\hbox{where}\quad
$$\cZ'(t)\triangleq\norme{Z}^\ell_{L^\infty_t(\dot{\mathbb{B}}^{\frac{d}{2}}_{2,1})}+\norme{Z}^h_{L_t^\infty(\dot{\mathbb{B}}^{\frac{d}{2}+1}_{2,1})}\\+\norme{Z_1}^\ell_{L^1_t(\dot{\mathbb{B}}^{\frac{d}{2}+2}_{2,1})}
+\norme{Z_2}^\ell_{L^1_t(\dot{\mathbb{B}}^{\frac{d}{2}+1}_{2,1})}
+\norme{Z_2}^\ell_{L^2_t(\dot{\mathbb{B}}^{\frac{d}{2}}_{2,1})}+\norme{Z}^h_{L^1_t(\dot{\mathbb{B}}^{\frac{d}{2}+1}_{2,1})}+\norme{W}^\ell_{L^1_t(\dot{\mathbb{B}}^{\frac{d}{2}}_{2,1})}.
\end{multline}
Finally,  if,  additionally,  $Z_0\in\dot{\mathbb{B}}^{-\sigma_1}_{2,\infty}$ for some
$\sigma_1\in\left]-\frac{d}{2},\frac{d}{2}\right]$ then \eqref{eq:Zs1} is satisfied 
as well as the decay estimates that follow, \emph{up to $\sigma = d/2$} for the first
one, and with $d/2-1$ and $d/2$ instead of $d/2-2$ and $d/2-1$ for the next
two ones, with $\alpha_1$ replaced by $(\sigma_1+d/2)/2.$
\end{Thm}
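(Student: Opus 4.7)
The plan is to mirror the strategy used for Theorem \ref{ThmGlobal}, namely to derive uniform a priori estimates on $\cZ'(t)$ for a suitable sequence of approximate solutions, pass to the limit by a compactness argument, and prove uniqueness. The novelty is that the additional assumptions \eqref{StructAssum} are precisely what allow to shift the low-frequency regularity exponent from $d/2-1$ up to $d/2$, producing the weaker low-frequency norm in \eqref{eq:smalldatabis}.

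For the linear part, I would localize \eqref{eq:Z} in each Littlewood--Paley block $\ddq$ and then apply, frequency by frequency, the Lyapunov functional $\cL$ constructed in Section \ref{s:hyp}. Up to commutators, this yields a dyadic inequality of the form
\[
\frac{d}{dt}\cL_q + \min(2^{2q},1)\,\cL_q \lesssim \|\cR_q\|_{L^2}\sqrt{\cL_q},
\]
where $\cL_q\simeq\|Z_q\|_{L^2}^2$ and $\cR_q$ gathers the commutators together with the nonlinear source terms $(\wt A^j(\bar V)-\wt A^j(V))\d_j Z$ and $r(Z)$. After multiplication by the appropriate weight $2^{q(d/2)}$ in low frequencies or $2^{q(d/2+1)}$ in high frequencies, and $\ell^1$-summation, this provides the heat-type low-frequency control in $\dot\B^{d/2}_{2,1}$ and the exponential high-frequency damping in $\dot\B^{d/2+1}_{2,1}$. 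The extra gain $\|Z_1\|^\ell_{L^1_t(\dot\B^{d/2+2}_{2,1})}$ is extracted from the first line of \eqref{GE} by using the damped mode $W$ to express $\d_j Z_1$ in terms of $\d_t Z_2$ and lower-order quantities, and the bound $\|W\|^\ell_{L^1_t(\dot\B^{d/2}_{2,1})}$ follows directly from \eqref{eq:W}.

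The core of the argument is then to estimate $\cR_q$ in $L^1_t(\dot\B^{d/2}_{2,1})$ in low frequencies, and $L^1_t(\dot\B^{d/2+1}_{2,1})$ in high frequencies, by a quadratic expression in $\cZ'(t)$. Thanks to the first two items of \eqref{StructAssum}, every nonlinear contribution to the convection term in the $Z_1$-equation is, modulo cubic remainders, a product of $Z_2$ with a gradient of $Z$, while the third item of \eqref{StructAssum} ensures $r(Z)=O(|Z_2|^2)$. Such bilinear expressions in which $Z_2$ always appears as a factor can be bounded in $\dot\B^{d/2}_{2,1}$ by standard product laws using the two extra integrability properties $\|Z_2\|^\ell_{L^1_t(\dot\B^{d/2+1}_{2,1})}$ and $\|Z_2\|^\ell_{L^2_t(\dot\B^{d/2}_{2,1})}$ encoded in $\cZ'(t)$, while in high frequencies the bounds are more comfortable since $\dot\B^{d/2+1}_{2,1}$ is a Banach algebra. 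The main obstacle is a careful paraproduct analysis of $\wt A^j(V)-\wt A^j(\bar V)$: one must systematically arrange that each derivative $\d_j Z_1$ is paired with a coefficient containing $Z_2$, because $Z_1$ only belongs to $L^1_t(\dot\B^{d/2+2}_{2,1})$ in low frequencies and not to $L^1_t(\dot\B^{d/2+1}_{2,1})$ as $Z_2$ does. This is exactly where \eqref{StructAssum} is indispensable. Once this is done, a standard continuation argument closes the bootstrap $\cZ'(t)\leq C\cZ'_0$ for $\cZ'_0$ small enough, and uniqueness is obtained by an energy method on the difference of two solutions at regularity $\dot\B^{d/2}_{2,1}$.

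For the decay estimates, I would follow step by step the proof of Theorem \ref{ThmDecay}: first propagate the negative Besov norm $\dot\B^{-\sigma_1}_{2,\infty}$ by revisiting the linearized analysis at that regularity (the low-frequency dissipative estimates being insensitive to the sign of the index), then interpolate with the improved low-frequency bounds $\|Z_1\|^\ell_{L^1_t(\dot\B^{d/2+2}_{2,1})}$ and $\|Z_2\|^\ell_{L^1_t(\dot\B^{d/2+1}_{2,1})}$ together with the exponential high-frequency decay. The gain of one derivative in each of these low-frequency controls, compared to Theorem \ref{ThmGlobal}, is precisely what allows the decay range of the first statement to be pushed up to $\sigma=d/2$, and what shifts the two $Z_2$-statements by one derivative as well, with the exponent $(\sigma_1+d/2)/2$ in place of $\alpha_1=(\sigma_1+d/2-1)/2$.
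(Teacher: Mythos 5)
Your overall plan is the right one and matches the paper's: run the Beauchard--Zuazua Lyapunov argument block by block, shift the low-frequency weight from $2^{q(d/2-1)}$ to $2^{qd/2},$ and use \eqref{StructAssum} to ensure that every occurrence of $\partial_jZ_1$ in the commutators and in $G_1,G_2,G_3$ carries a $Z_2$-factor, so that the two extra integrability properties of $Z_2$ stored in $\cZ'$ close the bilinear estimates. Your remarks on the paraproduct analysis and on the role of the third item of \eqref{StructAssum} (ensuring $r(Z)=O(|Z_2|^2)$) are also correct and are exactly the refinements the paper carries out in Lemma \ref{dtZbis}, Proposition \ref{L2estimatePropBFd2} and the improved bounds \eqref{G3}--\eqref{eq:Gq}.

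There is however one genuine misconception in your description of where the $\|Z_1\|^\ell_{L^1_t(\dot\B^{d/2+2}_{2,1})}$ control comes from. You propose to ``express $\partial_j Z_1$ in terms of $\partial_t Z_2$'' via the damped mode $W$ and the first line of \eqref{GE}. This cannot work: the $Z_2$-equation, and hence $W,$ only controls the \emph{single} linear combination $\sum_j \wt A^j_{2,1}(V)\partial_jZ_1,$ not the full gradient of $Z_1$; in general this combination is far from being elliptic, and recovering dissipation on all of $\nabla Z_1$ is precisely the content of the (SK) condition, not a free algebraic identity. In the paper the full $\|Z\|^\ell_{L^1_t(\dot\B^{d/2+2}_{2,1})}$ control (which contains the $Z_1$ component) is obtained \emph{directly} from the functional $\cL'$---see \eqref{eq:cL'}: the cross-term $\cI_q$ built from $N$ and the $M_\omega^k$'s supplies, through $\cN_{\bar V}>0,$ the factor $\min(1,2^{2q})\cL_q$ of dissipation on the whole block $Z_q,$ and summing after multiplication by $2^{qd/2}$ gives $\int_0^t\|Z\|^\ell_{\dot\B^{d/2+2}_{2,1}}.$ The damped mode is used for a different purpose: to obtain the \emph{additional} $L^1_t(\dot\B^{d/2+1}_{2,1})$ and $L^2_t(\dot\B^{d/2}_{2,1})$ bounds on $Z_2$ via \eqref{eq:Wd2b} and the comparison \eqref{eq:WZ2l}, which are needed to absorb the quadratic terms $\|Z_2\|^2_{\dot\B^{d/2}_{2,1}}$ surfacing in the right-hand side of \eqref{eq:cL'}. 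If you try to replace this mechanism by a pointwise algebraic inversion as you propose, you will not be able to close the estimate.
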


\begin{Rmq}
As  will be shown in the last section, this theorem applies to the compressible Euler with damping
(see  Theorem \ref{ThmEulerd2}).
\end{Rmq}
\begin{Rmq} In contrast with  Theorem \ref{ThmGlobal},  
the  functional setting  of Theorem \ref{Thmd2} allows to obtain uniform estimates in the asymptotic $\lambda\to+\infty$ 
if  the dissipative term is $\lambda H.$  This  is  the first step 
for studying the high relaxation limit.  
\end{Rmq}


\section{Proof of Theorems \ref{ThmGlobal} and \ref{ThmDecay}} \label{s:proof}

This section is devoted to proving  the global existence of strong solutions and decay estimates
for System \eqref{GEQSYM} supplemented with initial data that are close to 
the  reference solution $\bar V,$ in the general case where the structural 
assumptions listed in Subsection \ref{ss:structure} and (SK) condition are satisfied. 

The bulk of the proof consists in establishing a priori estimates, the other steps (proving existence and uniqueness)
being more classical.  As explained before, our strategy is to first work out  
a Lyapunov functional  in Beauchard-Zuazua's style, that is equivalent to the norm that we aim
at controlling, then to combine with the study of the damped mode $W$ defined in \eqref{def:WW} 
so as to close the estimates.

\subsection{Establishing the a priori estimates}

Throughout this part,  we assume that we are given a smooth (and decaying) solution $Z$ 
of \eqref{eq:Z} on $[0,T]\times\R^d$ with $Z_0$ as initial data, satisfying
\begin{equation}\label{eq:smallZ}
\sup_{t\in[0,T]} \|Z(t)\|_{\dot\B^{\frac d2}_{2,1}}\ll1.
\end{equation} 
We shall use repeatedly that, owing to the embedding $\dot\B^{\frac d2}_{2,1}\hookrightarrow L^\infty,$ we have also
\begin{equation}\label{eq:smallZbis}
\sup_{t\in[0,T]} \|Z(t)\|_{L^\infty}\ll1.
\end{equation} 
From now on, $C>0$ designates a generic harmless constant, the 
value of which depends on the context and we denote by $(c_q)_{q\in\Z}$
nonnegative sequences such that  $\sum_{q\in\Z} c_q=1.$
\medbreak
To start with, let us rewrite \eqref{eq:Z} as  follows: 
\begin{equation} \label{controlegradient}
\bar A^0 \d_t Z+\sum_{j=1}^d\bar A^j\d_jZ+LZ= G
\end{equation} 
with $G\triangleq G_1+G_2+G_3$ and
$$\begin{aligned}G_1&\triangleq-\sum_{j=1}^d\bar A^0\left((\wt A^0(V))^{-1}\wt A^j(V)-
(\bar A^0)^{-1}\bar A^j\right)\d_jZ,\\ 
G_2&\triangleq-\bar A^0 \left((\wt A^0(V))^{-1}-(\bar A^0)^{-1}\right)LZ, \\ G_3&\triangleq \bar A^0 (\wt A^0(V))^{-1}r(Z).\end{aligned}$$
For $q\in \mathbb{Z}$, applying $\dot{\Delta}_q$  to \eqref{controlegradient} yields
\begin{equation} \label{EqFourier} 
\bar A^0\d_tZ_{q}+\sum_{j=1}^d\bar A^j\d_jZ_{q}+LZ_q=\dot{\Delta}_qG \with Z_q\triangleq \ddq Z.
\end{equation}
Our analysis will mainly consist in 
estimating for all $q\in\Z$  a functional 
 $\cL_q$ that is equivalent to the $L^2(\R^d;\R^n)$ norm  of 
$Z_q$ 
and encodes informations on the dissipative properties of the system.
That functional  will be  built from \eqref{eq:ZZ1} and, since Condition (SK) is satisfied, the number $\cN_{\bar V}$ defined in \eqref{eq:Nomega} will be  positive.
Furthermore, since the Fourier transform of $Z_q$ is localized near the frequencies of magnitude  $2^q,$ 
 the corresponding dissipation term  $\cH_q$   will satisfy
$$\cH_q\gtrsim \min (1,2^{2q}) \cL_q.$$
The prefactor $\min(1,2^{2q})$ may be seen as a gain of two derivatives 
 in low frequencies  after time integration (like  for the heat equation) whereas 
 it corresponds to exponential decay for high frequencies. 
 In our setting where the low and high frequencies of $Z_0$ belong
to the spaces $\dot\B^{\frac d2-1}_{2,1}$ and  $\dot\B^{\frac d2+1}_{2,1},$ respectively, we thus have
$$\begin{aligned}
\|Z(t)\|_{\dot\B^{\frac d2-1}_{2,1}}^\ell  + \int_0^t \|Z\|_{\dot\B^{\frac d2+1}_{2,1}}^\ell 
&\lesssim \|Z_0\|_{\dot\B^{\frac d2-1}_{2,1}}^\ell  + \int_0^t \|G \|_{\dot\B^{\frac d2-1}_{2,1}}^\ell,\\
\|Z(t)\|_{\dot\B^{\frac d2+1}_{2,1}}^h  + \int_0^t \|Z\|_{\dot\B^{\frac d2+1}_{2,1}}^h
&\lesssim \|Z_0\|_{\dot\B^{\frac d2+1}_{2,1}}^h  + \int_0^t \|G \|_{\dot\B^{\frac d2+1}_{2,1}}^h.
\end{aligned}$$
A rapid   examination reveals that   the part $G_1$ of $G$  may entail a loss of one derivative (since 
it is a combination of components of $\nabla Z$) while  $G_2$ and $G_3$ contain products of components of $Z$ and $Z_2.$ 
Overcoming the difficulty with $G_1$ will  be achieved by exploiting the symmetrizable 
character of the system under consideration and changing slightly the weight $\bar A_0$ in the definition of $\cL_q$ for the high frequencies: we shall take 
\begin{equation}\label{eq:LqHF}  \cL_q\triangleq \norme{Z_q}^2_{L^2_{\wt A_0(V)}}+2^{-q} \cI_q\quad\hbox{if }\ q\geq 0,\end{equation} 
with
\begin{equation}\label{def:Iq}
\mathcal{I}_q\triangleq\int_{\mathbb{R}^d}\sum_{k=1}^{n-1}\varepsilon_k\Re\left(( N M_\omega^{k-1}\widehat{Z_q})\cdotp( N M_\omega^k\widehat{Z_q})\right),\end{equation}
where  $\varepsilon_1,\dotsm,\varepsilon_{n-1}>0$ will be chosen small enough   (according to the Appendix).
\medbreak
For the low frequencies, we shall keep the original definition that we proposed in the analysis of 
\eqref{eq:Zlinear}, that is to say, after integrating on the whole space and using Fourier-Plancherel theorem,
\begin{equation}\label{eq:LqBF} \cL_q\triangleq \norme{Z_q}^2_{L^2_{\bar A_0}}+2^q \cI_q\quad\hbox{if }\ q<0.
\end{equation} 
However,  we will  discover that the terms $G_2$ and $G_3$ 
cannot be controlled properly  in the space  $L^1_T(\dot\B^{\frac d2-1}_{2,1})$
because $Z_2$ is, somehow, too regular !
The way to overcome the difficulty is to look for an estimate of the low frequencies of the 
damped mode $W,$ then to  compare  with $Z_2.$ 
\medbreak
We shall keep in mind all the time that if choosing the coefficients $\varepsilon_k$ small enough, then we have 
$$\sum_{k=1}^{n-1}\varepsilon_k\bigl|((M_\omega)^t)^k N^t NM_\omega^{k-1}\bigr|
\leq \frac12\frac1{(2\pi)^d},$$
whence, owing to Fourier-Plancherel theorem,  
$$|\cI_q|\leq \frac12\|Z_q\|_{L^2}.$$
Furthermore, as  $\bar A_0=A_0(\bar V)$ is definite positive and  $V\mapsto \wt A_0(V),$  continuous,
 Condition \eqref{eq:smallZbis} ensures that  $\norme{Z_q}_{L^2_{\bar A_0}}\simeq \norme{Z_q}_{L^2}$
 and   $\norme{Z_q}_{L^2_{\wt A_0(V)}}\simeq \norme{Z_q}_{L^2}.$
 Therefore, we have
\begin{equation}\label{eq:equivalence}
\cL_q  \simeq  \norme{Z_q}_{L^2}^2 
\ \hbox{for all  }\ q\in\Z.  \end{equation}


\subsubsection{Basic energy estimates}

The first step is devoted to studying the time evolution of  $\|Z_q\|_{L^2_{\wt A_0(V)}}^2$ and $\|Z_q\|_{L^2_{\bar A_0}}^2.$ The outcome 
is given in  the following proposition. 
 \begin{Prop}\label{L2estimateProp}
Let $Z$ be a smooth solution of \eqref{eq:Z} on $[0,T]\times\R^d$ satisfying \eqref{eq:smallZ}. 
 Then, for all  $s\in \left[\frac{d}{2},\frac{d}{2}+1\right]$  and $q\geq 0$, we have:
\begin{multline}
\frac12\frac{d}{dt}\norme{Z_q}^2_{L_{\wt A_0(V)}^2}+\kappa_0\norme{Z_{2,q}}^2_{L^2}\lesssim
\norme{(\nabla Z,Z_2)}_{L^\infty}\norme{Z_q}^2_{L^2}+c_q2^{-qs}\norme{\nabla Z}_{\dot{\mathbb{B}}^{\frac{d}{2}}_{2,1}}\norme{Z}_{\dot{\mathbb{B}}^s_{2,1}}\norme{Z_q}_{L^2}\\ +c_q2^{-qs}\norme{\nabla Z}_{\dot{\mathbb{B}}^{\frac d2}_{2,1}}\norme{Z_2}_{\dot{\mathbb{B}}^{s-1}_{2,1}}\norme{Z_q}_{L^2}
+ c_q2^{-qs}\bigl(\norme{Z}_{\dot{\mathbb{B}}^s_{2,1}}\norme{Z_2}_{\dot{\mathbb{B}}^{\frac d2}_{2,1}}+\norme{Z_2}_{\dot{\mathbb{B}}^s_{2,1}}\norme{Z}_{\dot{\mathbb{B}}^{\frac d2}_{2,1}}\bigr)\norme{Z_q}_{L^2}.\label{L2estimateHF}
\end{multline}
Furthermore, for all  $s'\in \left[\frac{d}{2}-1,\frac{d}{2}\right]$ and $q\leq 0,$ we have:
\begin{multline}
\frac12\frac{d}{dt}\norme{Z_q}^2_{L_{\bar A_0}^2}+\kappa_0\norme{Z_{2,q}}^2_{L^2}\lesssim\norme{\nabla Z}_{L^\infty}\norme{Z_q}^2_{L^2}+c_q2^{-qs'}\norme{\nabla Z}_{\dot{\mathbb{B}}^{\frac{d}{2}}_{2,1}}\norme{Z}_{\dot{\mathbb{B}}^{s'}_{2,1}}\norme{Z_q}_{L^2}\\+c_q2^{-qs'}\norme{Z}_{\dot{\mathbb{B}}^{\frac d2}_{2,1}}\norme{\nabla Z}_{\dot{\mathbb{B}}^{s'}_{2,1}}\norme{Z_q}_{L^2}
+ c_q2^{-qs'}\norme{Z_2}_{\dot{\mathbb{B}}^{s'}_{2,1}}\norme{Z}_{\dot{\mathbb{B}}^{\frac d2}_{2,1}}\norme{Z_q}_{L^2}.\label{L2estimateBF}\end{multline}
\end{Prop}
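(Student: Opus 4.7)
The plan is to derive both inequalities by a frequency-localized energy argument that exploits Friedrichs-symmetrizability together with the partial dissipation bound $(LZ_q,Z_q)\geq \kappa_0\|Z_{2,q}\|_{L^2}^2$ coming from \eqref{eq:LZ}. The two estimates differ only in the form of the system being localized and the corresponding weight in the $L^2$ norm: the constant-coefficient form \eqref{EqFourier} with weight $\bar A_0$ in low frequencies, versus the symmetrizable form itself with weight $\wt A_0(V)$ in high frequencies, the latter choice being dictated by the need to avoid a derivative loss from the variable matrices.

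For $q\leq 0$, one takes the $L^2$ scalar product of \eqref{EqFourier} with $Z_q$; by the symmetry and constancy of the $\bar A^j$ the convective term vanishes, leaving
\begin{equation*}
\tfrac12\tfrac{d}{dt}\|Z_q\|^2_{L^2_{\bar A_0}}+\kappa_0\|Z_{2,q}\|_{L^2}^2\leq \|\ddq G\|_{L^2}\|Z_q\|_{L^2}.
\end{equation*}
The three pieces of $G$ have the schematic form $a_1(Z)\nabla Z$, $a_2(Z)LZ$ and $a_3(Z)r(Z)$ with $a_1(0)=a_2(0)=0$, while $r$ is at least linear in $Z_2$ and at least quadratic in $Z$. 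Combining the tame product rule in $\dot{\mathbb{B}}^{s'}_{2,1}$ with a composition estimate $\|a_i(Z)\|_{\dot{\mathbb{B}}^t_{2,1}}\lesssim\|Z\|_{\dot{\mathbb{B}}^t_{2,1}}$ (valid since $\|Z\|_{L^\infty}\ll 1$ by \eqref{eq:smallZbis}) yields the three nonlinear contributions on the right-hand side of \eqref{L2estimateBF}, with the $\|\nabla Z\|_{L^\infty}$-prefactor emerging from the low-frequency block of the paraproduct in $a_1(Z)\nabla Z$.

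For $q\geq 0$, I localize the symmetrizable form directly: applying $\ddq$ to $\wt A^0(V)\partial_t Z+\sum_j\wt A^j(V)\partial_j Z+LZ=r(Z)$ gives
\begin{equation*}
\wt A^0(V)\partial_t Z_q+\sum_j\wt A^j(V)\partial_j Z_q+LZ_q=\ddq r(Z)-[\ddq,\wt A^0(V)]\partial_t Z-\sum_j[\ddq,\wt A^j(V)]\partial_j Z.
\end{equation*}
Taking the scalar product with $Z_q$, using the symmetry of the $\wt A^j(V)$ and integrating by parts produces the time derivative of the weight and the divergence of the spatial matrices, so that
\begin{equation*}
\tfrac12\tfrac{d}{dt}\|Z_q\|^2_{L^2_{\wt A_0(V)}}+\kappa_0\|Z_{2,q}\|_{L^2}^2\lesssim \Bigl(\|\partial_t\wt A^0(V)\|_{L^\infty}+\sum_j\|\partial_j\wt A^j(V)\|_{L^\infty}\Bigr)\|Z_q\|_{L^2}^2 + \|F_q\|_{L^2}\|Z_q\|_{L^2},
\end{equation*}
where $F_q$ is the right-hand side of the localized equation. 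Using the equation itself to replace $\partial_t Z$ by a combination of $\nabla Z$, $LZ$ and $r(Z)$ gives $\|\partial_t \wt A^0(V)\|_{L^\infty}\lesssim\|(\nabla Z,Z_2)\|_{L^\infty}$, yielding the first term of \eqref{L2estimateHF}. The commutators $[\ddq,\wt A^j(V)]\partial_j Z$ are handled by the classical Besov commutator estimate applied with $a=\wt A^j(V)-\bar A^j$ together with composition, while $[\ddq,\wt A^0(V)]\partial_t Z$ is controlled by the same estimate after the substitution above; this substitution is precisely what forces the $\|Z_2\|_{\dot{\mathbb{B}}^{s-1}_{2,1}}$-dependent contribution in the third term of \eqref{L2estimateHF}. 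Finally, $\|\ddq r(Z)\|_{L^2}$ is controlled by the tame product rule using the partially-linear-in-$Z_2$ structure of $r$, matching the last line.

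The main technical obstacle is the careful bookkeeping of commutator and tame product estimates so that the resulting combinations of Besov norms match the right-hand sides of \eqref{L2estimateHF} and \eqref{L2estimateBF} exactly; in particular, the substitution of $\partial_t Z$ inside the $\wt A^0(V)$-commutator in the high-frequency case must be done with care in order to isolate the $Z_2$-contribution and not produce spurious terms that cannot be absorbed.
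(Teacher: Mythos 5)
Your high-frequency argument reproduces the paper's essentially verbatim: localize the symmetrizable form, pair with $Z_q$, integrate by parts (yielding the $\partial_t\wt A^0(V)$ and $\partial_j\wt A^j(V)$ terms, hence the $\|(\nabla Z,Z_2)\|_{L^\infty}\|Z_q\|^2_{L^2}$ prefactor after substituting for $\partial_t Z$), and handle $[\ddq,\wt A^j(V)]\partial_j Z$ and $[\ddq,\wt A^0(V)]\partial_t Z$ via the commutator estimate \eqref{eq:com1} together with Lemma~\ref{dtZ}, and $\ddq r(Z)$ via Proposition~\ref{ProprV}. The low-frequency argument, however, takes a genuinely different (and cleaner) route. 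The paper is careful \emph{not} to put the full convective term into the source: it keeps $\sum_j\wt A^j(V)\d_jZ_q$ on the left (so the commutator $R_q^1=[\wt A^j(V),\ddq]\d_jZ$ is handled by \eqref{eq:com1}) and only replaces $\wt A^0(V)$ by $\bar A^0$, precisely because writing the equation with $\wt A^0(V)\d_t Z_q$ would call for a bound on $\d_tZ$ in $\dot\B^{s'-1}_{2,1}$ at $s'=d/2-1$, which Lemma~\ref{dtZ} does not supply. You instead use \eqref{EqFourier}, i.e.\ the fully constant-coefficient form, so the convective term vanishes exactly and the whole nonlinearity sits in $G=G_1+G_2+G_3$. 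Estimating $\|\ddq G\|_{L^2}$ by the same bounds \eqref{eq:G1}--\eqref{eq:G3} that the paper uses for the cross estimate \eqref{eq:Iq} then yields $c_q2^{-qs'}\|Z\|_{\dot\B^{d/2}_{2,1}}\|(\nabla Z,Z_2)\|_{\dot\B^{s'}_{2,1}}\|Z_q\|_{L^2}$, which is dominated by the right-hand side of \eqref{L2estimateBF}, so the proposition is proved. This route sidesteps the $\d_t Z$ difficulty altogether and produces a strict subset of the stated terms, which is fine since the inequality is a $\lesssim$. One small inaccuracy in your write-up: the $\|\nabla Z\|_{L^\infty}\|Z_q\|^2_{L^2}$ term does not come from a paraproduct block of $a_1(Z)\nabla Z$ — in the paper it arises as the integration-by-parts remainder $\frac12\int\sum_j\d_j(\wt A^j(V))Z_q\cdot Z_q$ of the variable-coefficient convective term, and in your constant-coefficient set-up it simply never appears. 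That misattribution does not affect the validity of the proof.
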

\begin{proof}  It relies on an energy method  implemented on \eqref{eq:Z} after 
localization in the Fourier space, and on classical commutator estimates.

In order to   prove \eqref{L2estimateHF},
apply  operator $\dot{\Delta}_q$ to \eqref{eq:Z} to get: 
$$\wt A^0(V)\d_tZ_{q}+\sum_{j=1}^d\wt A^j(V)\d_jZ_{q}+ LZ_q=R^1_q+R^2_q+\dot{\Delta}_q( r(Z))$$
with $\displaystyle R^1_q\triangleq\sum_{j=1}^d[\wt A^j(V),\dot{\Delta}_q]\d_jZ$ and $R^2_q\triangleq[\wt A^0(V),\dot{\Delta}_q]\d_tZ$.
\medbreak
Taking the $L^2(\R^d;\R^n)$ scalar product with $Z_q,$    integrating by parts in the second term 
and using the fact that $\wt A^j(V)$ is symmetric  yields
$$\displaylines{\frac12\frac d{dt}\int_{\R^d} \wt A_0(V)Z_q\cdot Z_q +\int_{\R^d} LZ_q\cdot Z_q=\frac12\int_{\R^d}\biggl(\partial_t\tilde{A}^0(V)+\sum_j\d_{j}(\wt A^j(V))\biggr)Z_q\cdot Z_q\hfill\cr\hfill
+ \int_{\R^d}(R^1_q+R^2_q)\cdot Z_q+ \int_{\R^d} \ddq r(Z)\cdot Z_q .}$$
Hence, thanks to Property \eqref{partdissip3},  we obtain
\begin{multline}\label{eq:Zq1}
\frac{1}{2}\frac{d}{dt}\norme{Z_q}^2_{L_{\wt A^0(V)}^2}+\kappa_0\norme{NZ_{q}}^2_{L^2}\leq\frac12\int_{\R^d}\biggl(\partial_t\wt{A}^0(V)
+\sum_j\d_{j}(\wt A^j(V))\biggr)Z_q\cdot Z_q\\
+ \int_{\R^d}(R^1_q+R^2_q)\cdot Z_q+ \int_{\R^d} \ddq r(Z)\cdot Z_q.\end{multline}
For the first term in the right-hand side,  we have
$$
\int_{\mathbb{R}^d}  \partial_t(\wt A^0(V))Z_q\cdotp Z_q\lesssim\norme{\d_tZ}_{L^\infty}\norme{Z_q}^2_{L^2}.$$
Hence, using the fact that  
\begin{equation}\label{eq:Zt}
\d_tZ =(\wt A^0(V))^{-1}\biggl(\wt H(\bar V+Z)-\sum_{j=1}^d \wt A^j(V)\d_j Z\biggr),\end{equation}
the smallness condition \eqref{eq:smallZbis}
and the structure of $\wt H,$  
we  get
\begin{equation}\label{eq:dta0}
\int_{\mathbb{R}^d}  \partial_t(\wt A^0(V))Z_q\cdotp Z_q\lesssim\norme{(\nabla Z,Z_2)}_{L^\infty}\norme{Z_q}^2_{L^2}.\end{equation}
For the second term in the right-hand side of \eqref{eq:Zq1}, we may write
\begin{equation}\label{eq:nablaZ}\int_{\mathbb{R}^d}\sum_{j=1}^d \partial_{j}(\wt A^j(V))Z_q\cdotp Z_q
\lesssim\norme{\nabla Z}_{L^\infty}\norme{Z_q}^2_{L^2}.\end{equation}
Bounding the commutators terms in \eqref{eq:Zq1} relies on Cauchy-Schwarz inequality and  Inequality \eqref{eq:com1} that give
$$\begin{aligned}
\int_{\mathbb{R}^d}  (R^1_q\!+\!R^2_q)\cdotp Z_q &\lesssim c_q2^{-qs}
\Bigl(\sum_j\bigl\|\nabla(\wt A^j(V))\bigr\|_{\dot{\mathbb{B}}^{\frac{d}{2}}_{2,1}}\norme{\nabla Z}_{\dot{\mathbb{B}}^{s-1}_{2,1}}
+\bigl\|\nabla(\wt A^0(V))\bigr\|_{\dot{\mathbb{B}}^{\frac{d}{2}}_{2,1}}\norme{\partial_tZ}_{\dot{\mathbb{B}}^{s-1}_{2,1}}\Bigr)
\norme{Z_q}_{L^2}\\&\lesssim c_q2^{-qs}\norme{\nabla Z}_{\dot{\mathbb{B}}^{\frac{d}{2}}_{2,1}}
\bigl(\norme{Z}_{\dot{\mathbb{B}}^s_{2,1}}+\norme{\partial_tZ}_{\dot{\mathbb{B}}^{s-1}_{2,1}}\bigr)\norme{Z_q}_{L^2}.
\end{aligned}$$
To bound $\d_tZ,$ we need  the following lemma. 
\begin{Lemme} \label{dtZ}
Under assumption \eqref{eq:smallZ}, we have   for all  $\sigma\in]-d/2, d/2]$,
$$
\norme{\d_tZ}_{\dot{\mathbb{B}}^{\sigma}_{2,1}}\lesssim 
\norme{\nabla Z}_{\dot{\mathbb{B}}^{\sigma}_{2,1}}+\min\bigl(\norme{W}_{\dot{\mathbb{B}}^{\sigma}_{2,1}},\norme{Z_2}_{\dot{\mathbb{B}}^{\sigma}_{2,1}}\bigr)\cdotp$$
\end{Lemme}
\begin{proof}
Using \eqref{eq:Zt}, Propositions \ref{LP}, \ref{Composition}  and \ref{ProprV}   yields 
$$\begin{aligned}
\norme{\d_tZ}_{\dot{\mathbb{B}}^{\sigma}_{2,1}}&\leq \biggl\|\sum_{j=1}^d \wt A^j(V)\d_jZ\biggr\|_{\dot{\mathbb{B}}^{\sigma}_{2,1}}+\norme{LZ}_{\dot{\mathbb{B}}^{\sigma}_{2,1}}+\norme{r(Z)}_{\dot{\mathbb{B}}^{\sigma}_{2,1}}
\\&\lesssim \bigl(1+\norme{Z}_{\dot{\mathbb{B}}^{\frac d2}_{2,1}}\bigr)\norme{\nabla Z}_{\dot{\mathbb{B}}^{\sigma}_{2,1}}+\norme{Z_2}_{\dot{\mathbb{B}}^{\sigma}_{2,1}}+\norme{Z}_{\dot{\mathbb{B}}^{\frac d2}_{2,1}}\norme{Z_2}_{\dot{\mathbb{B}}^{\sigma}_{2,1}}.
\end{aligned}$$
Since we assumed that $\|Z\|_{\dot\B^{\frac d2}_{2,1}}$ is small, we have
$$\norme{\d_tZ}_{\dot{\mathbb{B}}^{\sigma}_{2,1}}\lesssim 
\norme{\nabla Z}_{\dot{\mathbb{B}}^{\sigma}_{2,1}}+\norme{Z_2}_{\dot{\mathbb{B}}^{\sigma}_{2,1}}.$$
Note that, actually, $\d_tZ_1$ can be bounded by just $\nabla Z$ and that we have $\d_tZ_2=-(\wt A^0_{2,2}(V))^{-1}\mathds{L} W$  by definition of $W,$
whence the final result. 
\end{proof}
Finally, Proposition \ref{ProprV} ensures that 
$$\begin{aligned}
\int_{\R^d} \ddq  r(Z)\cdot Z_q&\lesssim c_q2^{-qs}\norme{r(Z)}_{\dot{\mathbb{B}}^s_{2,1}}\norme{Z_q}_{L^2}\\
&\lesssim c_q2^{-qs}\Bigl(\|Z_2\|_{\dot\B^{\frac d2}_{2,1}}\|Z\|_{\dot\B^s_{2,1}} + 
\|Z\|_{\dot\B^{\frac d2}_{2,1}}\|Z_2\|_{\dot\B^s_{2,1}}\Bigr)\norme{Z_q}_{L^2}\cdotp
\end{aligned}$$
Putting  all the above estimates together completes the proof of  \eqref{L2estimateHF}.
\medbreak
For proving \eqref{L2estimateBF}, since  we do not know how to control 
$\d_tZ$ in $L^1_T(\dot\B^{s'-1}_{2,1})$ for  $s'=d/2-1$ (which is the value that we will take eventually),   
we proceed slightly differently, writing the equation satisfied by $Z_q$ 
as follows: 
$$\bar A^0\d_tZ_{q}+\sum_{j=1}^d\wt A^j(V)\d_jZ_{q}+ LZ_q=R^1_q+R^3_q+\dot{\Delta}_q( r(Z))
\with R^3_q\triangleq\ddq\Bigl(\bigl(\bar A^0-\wt A^0(V)\bigr)\d_tZ\Bigl)\cdotp$$
Arguing as for proving \eqref{eq:Zq1}, we now get
\begin{multline}\label{eq:Zq2}
\frac{1}{2}\frac{d}{dt}\norme{Z_q}^2_{L_{\bar A^0}^2}+\kappa_0\norme{NZ_{q}}^2_{L^2}\leq
\frac12\int_{\R^d}\biggl(\sum_j\d_{j}(\wt A^j(V))\biggr)Z_q\cdot Z_q\\+ \int_{\R^d}(R^1_q+R^3_q)\cdot Z_q+ \int_{\R^d} \ddq r(Z)\cdot Z_q.\end{multline}
The term  $R_q^1$   may be estimated as above (with $s'$ instead of $s$), and $\ddq(r(Z))$ may be bounded
by means of \eqref{eq:R}. 
As regards   $R^3_q,$ we write that
$$\norme{R^3_q}_{L^2}\lesssim c_q2^{-qs'}\norme{\wt A^0(V)-\wt A^0(\bar{V})}_{\dot{\mathbb{B}}^{\frac d2}_{2,1}}\norme{\d_tZ}_{\dot{\mathbb{B}}^{s'}_{2,1}}.$$
Thus, using  composition, product estimates and Lemma \ref{dtZ}, we obtain
\begin{eqnarray*}
\biggl|\int_{\mathbb{R}^d} R^3_q\cdotp Z_q\biggr|&\lesssim& c_q2^{-qs'}\norme{Z}_{\dot{\mathbb{B}}^{\frac d2}_{2,1}}\norme{(\nabla Z,Z_2)}_{\dot{\mathbb{B}}^{s'}_{2,1}}\norme{Z_q}_{L^2},
\end{eqnarray*}
which leads to the desired estimate.
 \end{proof}

\subsubsection{Cross estimates}

Proposition \ref{L2estimateProp} only  allows to exhibit 
 the integrability properties of the components of $Z$ experiencing direct
 dissipation.
To recover  the dissipation for all  the components, we have to look 
at the time derivative of the quantity $\cI_q$ defined in \eqref{def:Iq}. 
To achieve it, we apply to  \eqref{EqFourier} the method that has been  explained in Section \ref{s:hyp} and leads to \eqref{eq:I}.
The only  change lies   in the (harmless) additional source term $G_q$.  In the end, integrating on $\R^d$ the obtained identity, then using the fact that ${\rm Supp}\, \wh Z_q\subset \bigl\{3\cdot 2^q/4\leq |\xi|\leq 8\cdot 2^q/3\bigr\}$ yields 
$$\frac{d}{dt}\mathcal{I}_q
+\frac{2^q}2 \sum_{k=1}^{n-1}\varepsilon_k \int_{\R^d} |NM_\omega^k \wh Z_q|^2\,d\xi
\leq \frac {2^{-q}\kappa_0}2 \|NZ_q\|_{L^2}^2 +  C\|\ddq G\|_{L^2}\|Z_q\|_{L^2}.$$
The last term may be bounded by means of Propositions \ref{LP} and \ref{Composition} (keeping
all the time in mind that \eqref{eq:smallZ} is satisfied). More precisely, 
for $G_1,$ we have for all $\sigma\in]-d/2,d/2],$ 
\begin{eqnarray}\label{eq:G1} \norme{G_1}_{\dot{\mathbb{B}}^{\sigma}_{2,1}}&\!\!\lesssim\!\!& \sum_{j=1}^d\bigl\|\bigl(\wt A^0(V)^{-1}\wt A^j(V)-(\bar A^0)^{-1}\bar A^j\bigr)\d_jZ\bigr\|_{\dot{\mathbb{B}}^{\sigma}_{2,1}}
\nonumber\\
&\!\!\lesssim\!\!& \norme{Z}_{\dot{\mathbb{B}}^{\frac{d}{2}}_{2,1}}\norme{\nabla Z}_{\dot{\mathbb{B}}^{\sigma}_{2,1}}.\end{eqnarray}
Similarly,
\begin{equation}\label{eq:G2}
\norme{G_2}_{\dot{\mathbb{B}}^{\sigma}_{2,1}}\lesssim\norme{\left((\wt A^0(V))^{-1}-(\bar A_0)^{-1}\right)LZ}_{\dot{\mathbb{B}}^{\sigma}_{2,1}}  \lesssim \norme{Z}_{\dot{\mathbb{B}}^{\frac{d}{2}}_{2,1}}\norme{Z_2}_{\dot{\mathbb{B}}^{\sigma}_{2,1}}\end{equation}
and, using Proposition \ref{ProprV}, 
\begin{equation}\label{eq:G3}
\norme{G_3}_{\dot{\mathbb{B}}^{\sigma}_{2,1}}=\norme{\bar A^0\wt A^0(V)^{-1}r(Z)}_{\dot{\mathbb{B}}^{\frac{d}{2}}_{2,1}} \lesssim \norme{Z}_{\dot{\mathbb{B}}^{\frac{d}{2}}_{2,1}}\norme{Z_2}_{\dot{\mathbb{B}}^{\sigma}_{2,1}}.\end{equation}
Hence, one can conclude that for all $\sigma\in]-d/2,d/2],$ we have
\begin{multline}\label{eq:Iq}
\frac{d}{dt}\mathcal{I}_q
+\frac{2^q}2 \sum_{k=1}^{n-1}\varepsilon_k \int_{\R^d} |NM_\omega^k \wh Z_q|^2\,d\xi\\
\leq \frac {2^{-q}\kappa_0}2 \|NZ_q\|_{L^2}^2 +  Cc_q2^{-q\sigma}\|(\nabla Z, Z_2)\|_{\dot\B^{\sigma}_{2,1}}
\|Z\|_{\dot\B^{\frac d2}_{2,1}}\|Z_q\|_{L^2}.\end{multline}

\subsubsection{Closure of the estimates : a first attempt}

Remember that since Condition (SK) is satisfied, the quantity $\cN_{\bar V}$  
defined in \eqref{eq:Nomega} is positive for any choice 
of positive parameters $\e_0,\cdots,\e_{n-1}.$ 
Consequently, if  we set 
$$\cH_q:= \frac{\kappa_0}2\|NZ_{q}\|^2 +  \min (1,2^{2q}) \sum_{k=1}^{n-1}\varepsilon_k \int_{\R^d} |NM_\omega^k \wh Z_q|^2\,d\xi$$
and use Fourier-Plancherel theorem and the equivalence \eqref{eq:equivalence}, we see that 
(up to a change of  $\kappa_0$), we have  for all $q\in\Z,$ 
\begin{equation}\label{eq:cHq}\cH_q\geq \kappa_0 \min(1,2^{2q})\cL_q.\end{equation}
Our goal is to use this inequality  to bound the quantity $\cZ$ defined in \eqref{eq:X} in terms of $\cZ_0$ only. 

Let us start with the bounds for the  low frequencies.  
Putting together Inequality \eqref{L2estimateBF}  with $s'=d/2-1$ and  the cross estimate \eqref{eq:Iq}
then, using \eqref{eq:cHq}, we get for all $q<0,$ 
$$\displaylines{
\frac d{dt} \cL_q + \kappa_0 2^{2q} \cL_q \lesssim 
\norme{\nabla Z}_{L^\infty}\norme{Z_q}^2_{L^2}+c_q2^{-q(\frac d2-1)}
\biggl(\norme{\nabla Z}_{\dot{\mathbb{B}}^{\frac{d}{2}}_{2,1}}\norme{Z}_{\dot{\mathbb{B}}^{\frac d2-1}_{2,1}}\hfill\cr\hfill +\norme{Z}_{\dot{\mathbb{B}}^{\frac d2}_{2,1}}\norme{Z_2}_{\dot{\mathbb{B}}^{\frac d2-1}_{2,1}}+\norme{Z}^2_{\dot{\mathbb{B}}^{\frac d2}_{2,1}}
+\|\nabla Z\|_{\dot\B^{\frac d2}_{2,1}}\|Z\|_{\dot\B^{\frac d2}_{2,1}}\biggr)\|Z_q\|_{L^2}.}$$
 Hence,   using \eqref{eq:equivalence}, applying Lemma \ref{SimpliCarre}, multiplying by $2^{q(\frac d2-1)},$
 using the embedding $\dot\B^{\frac d2}_{2,1}\hookrightarrow L^\infty$   and summing up on   $q<0$ gives 
$$
\displaylines{\|Z(t)\|^\ell_{\dot\cB^{\frac d2-1}_{2,1}}
  + \kappa_0\int_0^t  \|Z\|^\ell_{\dot\cB^{\frac d2+1}_{2,1}}\,d\tau
 \leq  \|Z_0\|^\ell_{\dot\cB^{\frac d2-1}_{2,1}}\hfill\cr\hfill +
 \int_0^t\Bigl(\norme{Z}_{\dot{\mathbb{B}}^{\frac{d}{2}+1}_{2,1}}\norme{Z}_{\dot{\mathbb{B}}^{\frac d2-1}_{2,1}}
 +\|Z\|_{\dot\B^{\frac d2}_{2,1}}^2 +\|Z\|_{\dot\B^{\frac d2}_{2,1}} \norme{Z_2}_{\dot{\mathbb{B}}^{\frac d2-1}_{2,1}}
 + \|Z\|_{\dot\B^{\frac d2+1}_{2,1}}\|Z\|_{\dot\B^{\frac d2}_{2,1}}\Bigr),}$$
where we used the notation
\begin{equation}\label{eq:Bsigma}
\|Z\|^\ell_{\dot\cB^{\sigma}_{2,1}}\triangleq   \sum_{q<0} 2^{q\sigma} \sqrt{\cL_q}.\end{equation}

To handle the high frequencies, we combine 
 Inequality  \eqref{L2estimateHF}  with $s=d/2+1,$  the cross estimate \eqref{eq:Iq}  and \eqref{eq:cHq}, to get for all $q\geq0,$
 \begin{multline}\label{eq:Lya3HF}\frac d{dt}\cL_q + \kappa_0 \cL_q
 \lesssim \norme{(\nabla Z,Z_2)}_{L^\infty}\norme{Z_q}^2_{L^2}\\+c_q2^{-q(\frac d2+1)}\biggl(\norme{Z}_{\dot{\mathbb{B}}^{\frac{d}{2}+1}_{2,1}}^2 +\norme{Z}_{\dot{\mathbb{B}}^{\frac d2+1}_{2,1}}\norme{Z}_{\dot{\mathbb{B}}^{\frac d2}_{2,1}}
+ \|Z_2\|_{\dot\B^{\frac d2}_{2,1}}\|Z\|_{\dot\B^{\frac d2}_{2,1}}\biggr)\norme{Z_q}_{L^2} \cdotp\end{multline}
 Hence, using the equivalence  \eqref{eq:equivalence}, Lemma \ref{SimpliCarre},  multiplying by $2^{q(\frac d2+1)}$ and summing up on 
 $q\geq0$ gives 
 $$\displaylines{\|Z(t)\|^h_{\dot\cB^{\frac d2+1}_{2,1}}
  + \kappa_0\int_0^t  \|Z\|^h_{\dot\cB^{\frac d2+1}_{2,1}}
 \leq  \|Z_0\|^h_{\dot\cB^{\frac d2+1}_{2,1}}+
\int_0^t\Bigl(\|Z\|_{\dot\B^{\frac d2+1}_{2,1}}^2 +  \|Z\|_{\dot\B^{\frac d2+1}_{2,1}}\|Z\|_{\dot\B^{\frac d2}_{2,1}}
+  \|Z_2\|_{\dot\B^{\frac d2}_{2,1}}\|Z\|_{\dot\B^{\frac d2}_{2,1}}\Bigr)}
$$
where we used the notation
$$\|Z\|^h_{\dot\cB^{\sigma}_{2,1}}\triangleq   \sum_{q\geq0} 2^{q\sigma} \sqrt{\cL_q}.$$
Let us introduce the functional   
\begin{equation}\label{def:cL}
\cL\triangleq \|Z\|^\ell_{\dot\cB^{\frac d2-1}_{2,1}}
  + \|Z\|^h_{\dot\cB^{\frac d2+1}_{2,1}}\end{equation}
   which, in light of \eqref{eq:equivalence}, is  equivalent to 
   $ \|Z\|^\ell_{\dot\B^{\frac d2-1}_{2,1}}
  + \|Z\|^h_{\dot\B^{\frac d2+1}_{2,1}},$ and thus to    $\|Z\|_{\dot\B^{\frac d2-1}_{2,1}\cap\dot\B^{\frac d2+1}_{2,1}}.$ 
  \smallbreak
  Adding up the above inequalities for the low and high frequencies, we get up to a change of $\kappa_0$ and  for all $t\in[0,T],$ 
$$\displaylines{\cL(t) + \kappa_0\int_0^t  \|Z\|_{\dot\B^{\frac d2+1}_{2,1}}
 \leq    \cL(0)+C\int_0^t\Bigl(\|Z\|_{\dot\B^{\frac d2+1}_{2,1}}^2 
\hfill\cr\hfill+  \|Z\|_{\dot\B^{\frac d2+1}_{2,1}}\|Z\|_{\dot\B^{\frac d2}_{2,1}}
 +  \|Z\|_{\dot\B^{\frac d2+1}_{2,1}}\|Z\|_{\dot\B^{\frac d2-1}_{2,1}}
+  \|Z_2\|_{\dot\B^{\frac d2-1}_{2,1}}\|Z\|_{\dot\B^{\frac d2}_{2,1}}
 +\|Z\|_{\dot\B^{\frac d2}_{2,1}}^2\Bigr)\cdotp}$$
Hence, using the  interpolation inequality
  \begin{equation}\label{eq:interpoZ}
  \|Z\|_{\dot\B^{\frac d2}_{2,1}} \lesssim   \sqrt{\|Z\|_{\dot\B^{\frac d2-1}_{2,1}}  \|Z\|_{\dot\B^{\frac d2+1}_{2,1}}}
  \lesssim\cL,\end{equation}
  and eliminating some redundant terms,   we end up with 
  \begin{equation}\label{eq:cL}
  \cL(t) +  \kappa_0\int_0^t  \|Z\|_{\dot\B^{\frac d2+1}_{2,1}}
 \leq   \cL(0)+ C\int_0^t \|Z\|_{\dot\B^{\frac d2+1}_{2,1}} \cL  
+ C\int_0^t \|Z_2\|_{\dot\B^{\frac d2-1}_{2,1}}^\ell \|Z\|_{\dot\B^{\frac d2}_{2,1}}.\end{equation}
 As we start from small data, we expect $\cL$ to be small as well so that the first term in the first integral 
 in the right-hand side may be  absorbed by the second term on the left.  
 However,  at this stage, we have no proper control on 
   $\|Z_2\|_{\dot\B^{\frac d2-1}_{2,1}}^\ell.$
Studying   the evolution of the damped mode $W,$ 
which is the aim of the next section, will enable us to overcome the difficulty.

\subsubsection{The damped mode} 
As underlined in the introduction, the function
\begin{equation*}
W\triangleq -\mathds{L}^{-1}A^0_{2,2}(V)\d_tZ_2=Z_2+\sum_{j=1}^d \mathds{L}^{-1}\bigl(\wt A^j_{2,1}(V)\d_jZ_1+\wt A_{2,2}^j(V)\d_jZ_2\bigr)-\mathds{L}^{-1}Q(Z)\cdotp
\end{equation*}
is expected  to have \emph{better} integrability properties in low frequencies than the whole solution. 
This will be a consequence of the following proposition.
\begin{Prop}\label{PropW} Let $Z$ be a smooth solution of \eqref{eq:Z} on $[0,T]\times\R^d$ satisfying \eqref{eq:smallZ},  
and  denote $\bar A^0_{2,2}\triangleq A^0_{2,2}(\bar V).$  Assume that $\sigma\in]-d/2,d/2].$ 
Then  we have for all $q<0,$  
$$\displaylines{\quad\frac12\frac d{dt}\|W_q\|_{L^2_{\bar A^0_{2,2}}}^2 + \kappa_0\|W_q\|_{L^2}^2  \lesssim
\Bigl(\|\nabla^2 Z_q\|_{L^2}+ \|\nabla W_{q}\|_{L^2}\Bigr)\|W_q\|_{L^2} \hfill\cr\hfill
+ c_q2^{-q\sigma}
 \|(W, Z_2)\|_{\dot\B^{\sigma}_{2,1}}\|Z\|_{\dot\B^{\frac d2}_{2,1}}\|W_q\|_{L^2}
+ c_q2^{-q\min(\sigma,\frac d2-1)}
 \|(\nabla Z, W)\|_{\dot\B^{\frac d2}_{2,1}}\|Z\|_{\dot\B^{\min(\sigma+1,\frac d2)}_{2,1}}\|W_q\|_{L^2}.\quad}$$
\end{Prop}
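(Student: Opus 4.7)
The plan is to perform an energy estimate on the frequency-localized equation for $W$ derived from \eqref{eq:W}, using the constant weight $\bar A^0_{2,2}$ in order to avoid commutators with $\wt A^0_{2,2}(V)$ and relying on the coercivity inherited from \eqref{partdissip2} on $\mathcal{M}^\perp$ to produce the $\kappa_0\|W_q\|_{L^2}^2$ dissipation term. Since the right-hand side of \eqref{eq:W} involves $\d_t$ acting on spatial derivatives of $Z$, the heart of the proof will be to split those terms into a genuinely \emph{linear} contribution (after substituting $\d_tZ_1,\d_tZ_2$ from the equations) and \emph{nonlinear} remainders estimated by Besov product and composition rules.

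Concretely, I would rewrite \eqref{eq:W} in the form
$$\bar A^0_{2,2}\d_tW+\mathds{L} W = E-(\wt A^0_{2,2}(V)-\bar A^0_{2,2})\d_tW,$$
with $E$ the right-hand side of \eqref{eq:W}. Applying $\ddq$ (the commutator with the constant matrix $\bar A^0_{2,2}$ vanishes), taking the $L^2$ scalar product with $W_q$ and using that $\bar A^0_{2,2}$ is symmetric positive definite gives
$$\frac12\frac d{dt}\|W_q\|_{L^2_{\bar A^0_{2,2}}}^2+\kappa_0\|W_q\|_{L^2}^2
\lesssim\bigl(\|E_q\|_{L^2}+\|\ddq[(\wt A^0_{2,2}(V)-\bar A^0_{2,2})\d_tW]\|_{L^2}\bigr)\|W_q\|_{L^2}.$$

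The next step is to expand $E$ by Leibniz's rule. Each term $\d_t(\wt A^j_{2,k}(V)\d_jZ_k)$ splits into a linear piece $\wt A^j_{2,k}(V)\d_j\d_tZ_k$ and a quadratic piece $D\wt A^j_{2,k}(V)\d_tV\cdot\d_jZ_k$. In the linear piece one substitutes, as in the proof of Lemma \ref{dtZ}, the expressions $\d_tZ_1=-(\wt A^0_{1,1}(V))^{-1}\sum_k(\wt A^k_{1,1}(V)\d_kZ_1+\wt A^k_{1,2}(V)\d_kZ_2)$ and $\d_tZ_2=-(\wt A^0_{2,2}(V))^{-1}\mathds{L} W$. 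After freezing the coefficients at $\bar V$ this produces exactly the $\|\nabla^2 Z_q\|_{L^2}+\|\nabla W_q\|_{L^2}$ term in the statement, the remainders featuring $\wt A^j_{2,k}(V)-\wt A^j_{2,k}(\bar V)$ being genuinely nonlinear. All quadratic pieces, together with the contribution from $\d_tQ(Z)$, share the same structure: a (small) factor depending on $Z$ multiplied by either $\nabla Z$, $W$ or $Z_2$. Each such product can then be bounded in $\dot\B^\sigma_{2,1}$ via Propositions \ref{LP}, \ref{Composition} and \ref{ProprV} (using the smallness \eqref{eq:smallZ}), while localization produces the factor $c_q2^{-q\sigma}$.

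The remaining term $\ddq[(\wt A^0_{2,2}(V)-\bar A^0_{2,2})\d_tW]$ is dealt with by writing $\d_tW=\d_tZ_2+\d_tR$ with $R\triangleq W-Z_2$ the correction, which again yields an expression of $\d_tW$ in terms of $\nabla Z$ and $W$; a Besov product estimate with prefactor $\|Z\|_{\dot\B^{d/2}_{2,1}}$ then fits within the announced bound. The main obstacle is purely one of bookkeeping: the two endpoint constraints $-d/2<\sigma\leq d/2$ must each be respected, and the occurrence of $\min(\sigma,d/2-1)$ and $\min(\sigma+1,d/2)$ in the statement reflects that certain nonlinear terms (those involving a product of $\nabla Z$ with $W$, for instance) saturate at the regularity threshold $d/2-1$ rather than at $\sigma$ itself; identifying which subterm goes into which endpoint is what dictates the precise form of the estimate.
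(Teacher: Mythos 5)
Your plan reproduces the paper's argument: localize with $\ddq$, use the constant symmetrizer $\bar A^0_{2,2}$ so that the dissipation $\kappa_0\|W_q\|^2_{L^2}$ comes cleanly from \eqref{partdissip2}, then expand the right-hand side of \eqref{eq:W} by Leibniz, substitute $\d_tZ_1$ from \eqref{GE} and $\d_tZ_2=-(\wt A^0_{2,2}(V))^{-1}\mathds{L}W$ to extract the linear $\|\nabla^2 Z_q\|_{L^2}+\|\nabla W_q\|_{L^2}$ contribution, and control the remaining nonlinear pieces with Propositions \ref{LP}, \ref{Composition}, \ref{ProprV}. The only cosmetic difference is that the paper writes the coefficient-correction as $h_1=\bigl(\Id-\bar A^0_{2,2}(\wt A^0_{2,2}(V))^{-1}\bigr)\mathds{L}W$ so that $\d_tW$ never reappears, whereas you keep $(\wt A^0_{2,2}(V)-\bar A^0_{2,2})\d_tW$ and re-substitute; both give the same bound.
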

\begin{proof}
{}From  \eqref{eq:W}, we gather that 
\begin{equation}\label{eq:eqW}
\bar A^0_{2,2} \d_tW+\mathds{L} W = h\end{equation}
with $ h\triangleq h_1+ \bar A^0_{2,2} \mathds{L}^{-1}(h_2+h_3)$ and 
$$\begin{aligned}
h_1&\triangleq \bigl(\Id- \bar A^0_{2,2}(A^0_{2,2}(V))^{-1}\bigr) \mathds{L} W,\\
 h_2&\triangleq \sum_{j=1}^d\d_t\bigl(A^j_{2,1}(V)\d_jZ_1+A_{2,2}^j(V)\d_jZ_2\bigr),
 \\h_3&=-\partial_tQ(Z)\cdotp\end{aligned}$$
Applying  $\ddq$ to \eqref{eq:eqW} and taking the scalar product 
with $W_q\triangleq \ddq W$ yields, thanks to \eqref{eq:LZ}, 
\begin{equation}\label{eq:W0}\frac12\frac d{dt}\|W_q\|_{L^2_{\bar A^0_{2,2}}}^2 +\kappa_0\|W_q\|_{L^2}^2\leq  
\bigl(\|\ddq h_1\|_{L^2} + C\|\ddq h_2\|_{L^2}+ C\|\ddq h_3\|_{L^2}\bigr)\|W_q\|_{L^2}.
\end{equation}
As \eqref{eq:smallZ} is satisfied, Composition estimates readily give that for all $\sigma\in]-d/2,d/2],$ 
\begin{equation}\label{eq:h1q}
\|h_1\|_{\dot\B^{\sigma}_{2,1}} \lesssim  \|Z\|_{\dot\B^{\frac d2}_{2,1}}\|W\|_{\dot\B^{\sigma}_{2,1}}.
\end{equation}
 For bounding  $h_2,$  we use that
 for all $j\in\{1,\cdots,d\},$ 
 $$\displaylines{\d_t(A^j_{2,1}(V)\d_j Z_1 +A^j_{2,2}(V)\d_j Z_2) = D_VA^j_{2,1}(V)\d_t Z\d_jZ_1 
 +\bar A^j_{2,1}\d_t\d_jZ_1  \hfill\cr\hfill+  \bigl(A^j_{2,1}(V)-A^j_{2,1}(\bar V)\bigr)\d_t\d_jZ_1
+ D_VA^j_{2,2}(V)\d_t Z\d_jZ_2 
 +\bar A^j_{2,2}\d_t\d_jZ_2+  \bigl(A^j_{2,2}(V)-A^j_{2,2}(\bar V)\bigr)\d_t\d_jZ_2.}$$
 For $k=1,2,$ we have, according to product and composition laws, and Lemma \ref{dtZ}, 
  $$\begin{aligned} \|D_VA^j_{2,k}(V)\d_t Z\d_jZ_k\|_{\dot\B^{\sigma}_{2,1}}^\ell &\lesssim 
 \|\d_t Z\|_{\dot\B^{\frac d2}_{2,1}} \|\nabla Z\|_{\dot\B^{\sigma}_{2,1}}\\
  &\lesssim \|(\nabla  Z,W)\|_{\dot\B^{\frac d2}_{2,1}} \|Z\|_{\dot\B^{\sigma+1}_{2,1}}
  \end{aligned}$$
 as well as (provided we also have $\sigma\leq d/2-1$): 
   $$\begin{aligned} \| \bigl(A^j_{2,k}(V)-A^j_{2,k}(\bar V)\bigr)\d_t\d_jZ_k\|^\ell_{\dot\B^{\sigma}_{2,1}} &\lesssim 
 \|\d_t \nabla Z\|_{\dot\B^{\frac d2-1}_{2,1}} \|Z\|_{\dot\B^{\sigma+1}_{2,1}}\\
  &\lesssim \|(\nabla  Z,W)\|_{\dot\B^{\frac d2}_{2,1}} \|Z\|_{\dot\B^{\sigma+1}_{2,1}}.
  \end{aligned}$$
  Multiplying the first equation of \eqref{GE} (on the left) by the matrix $(\wt A^0_{1,1}(V))^{-1}$
  then differentiating with respect to $x_j,$ we discover  that 
 $\d_t\d_jZ_1$ is a combination of terms of type $A(Z)\,D^2 Z$ 
and  $B(Z)\, DZ\otimes DZ.$  Consequently, we have for all $q\leq0$   (still if  $\sigma\leq d/2-1$): 
 $$
\| \ddq(\d_t\d_jZ_1)\|_{L^2} \lesssim \|D^2Z_q\|_{L^2}
+c_q2^{-q\sigma} \|Z\|_{\dot\B^{\sigma+1}_{2,1}} \|\nabla Z\|_{\dot\B^{\frac d2}_{2,1}}.
$$
Note that if $\sigma\in]d/2-1,d/2],$ then the above inequalities  are valid (owing to \eqref{eq:comparaison})
if we change $\sigma+1$ to $d/2.$ 
\smallbreak
Finally, we have
$$
\d_t\d_jZ_2=-\d_j\bigl((\bar A^0_{2,2})^{-1}\mathds{L} W\bigr) +\d_j\bigl((\bar A^0_{2,2})^{-1}-\bigl(\wt A^0_{2,2}(V)\bigr)^{-1}\bigr)\mathds{L} W.$$
Hence, for all $q\leq0,$ and thanks to \eqref{eq:comparaison}, 
$$\begin{aligned}
\|\ddq(\d_t\d_jZ_2)\|_{L^2}&\lesssim \|\nabla W_q\|_{L^2} + c_q2^{-q\sigma}\|\bigl((\bar A^0_{2,2})^{-1}-\bigl(\wt A^0_{2,2}(V)\bigr)^{-1}\bigr)\mathds{L} W\|_{\dot\B^{\sigma}_{2,1}}\\
&\lesssim  \|\nabla W_q\|_{L^2} + c_q2^{-q\sigma}\|Z\|_{\dot\B^{\frac d2}_{2,1}}\|W\|_{\dot\B^{\sigma}_{2,1}}.\end{aligned}
$$


To bound  $h_3$, we use the fact that  $\partial_tQ(Z)=D_ZQ(Z)\partial_tZ.$ Hence, as  $Q(Z)$ is at least quadratic, we easily obtain 
from Propositions \ref{LP} and \ref{Composition} that
$$\|h_3\|_{\dot\B^{\sigma}_{2,1}} \lesssim  \|Z\|_{\dot\B^{\frac d2}_{2,1}}\|(\nabla Z,W)\|_{\dot\B^{\sigma}_{2,1}}$$
which concludes the proof.
 \end{proof}

It is now easy to obtain dissipative 
estimates for the low frequencies of $W.$
Indeed, starting from the inequality of Proposition \ref{PropW}, 
taking advantage of Lemma \ref{SimpliCarre}, multiplying the resulting inequality  with $2^{q\sigma}$
and summing up on $q<0,$ we get whenever $\sigma\in]-d/2,d/2],$ 
 \begin{multline}\label{eq:Ws}
\cW^{\sigma}(t)+ \kappa_0\int_0^t \|W\|_{\dot{\mathbb{B}}^{\frac{d}{2}}_{2,1}}^\ell \leq
\cW^{\sigma}(0)+  C\int_0^t\|(\nabla^2 Z,\nabla W)\|^\ell_{\dot\B^\sigma_{2,1}} \\
+  C\int_0^t  \|(W, Z_2)\|_{\dot\B^{\sigma}_{2,1}}\|Z\|_{\dot\B^{\frac d2}_{2,1}}
+C\int_0^t  \|(\nabla Z, W)\|_{\dot\B^{\frac d2}_{2,1}}\|Z\|_{\dot\B^{\min(\sigma+1,\frac d2)}_{2,1}}
\end{multline}
with   $\cW^\sigma\triangleq  \sum_{q<0} 2^{q\sigma} \|\ddq W\|_{L^2_{\bar A^0_{2,2}}}.$
\medbreak
Let us first apply \eqref{eq:Ws} with  $\sigma=d/2.$ 
Then we get (discarding the redundant terms): 
  \begin{multline}\label{eq:Wd2}
\cW^{\frac d2}(t)+ \kappa_0\int_0^t \|W\|_{\dot{\mathbb{B}}^{\frac{d}{2}}_{2,1}}^\ell \leq
\cW^{\frac d2}(0) \\+ C\int_0^t  \|(\nabla^2Z,\nabla W)\|_{\dot\B^{\frac d2}_{2,1}}^\ell
+ C\int_0^t  \|(\nabla Z,Z_2,W)\|_{\dot\B^{\frac d2}_{2,1}} \|Z\|_{\dot\B^{\frac d2}_{2,1}}.\end{multline}
In order to close the estimates, we also need the inequality corresponding to $\sigma=d/2-1,$ namely
  \begin{multline}\label{eq:Wd2-1}
\cW^{\frac d2-1}(t)+ \kappa_0\int_0^t \|W\|_{\dot{\mathbb{B}}^{\frac{d}{2}-1}_{2,1}}^\ell \leq
\cW^{\frac d2-1}(0)+C\int_0^t\|(\nabla Z,W)\|_{\dot{\mathbb{B}}^{\frac{d}{2}}_{2,1}}^\ell\\+ C\int_0^t\|(W,\nabla Z)\|_{\dot\B^{\frac d2}_{2,1}}\|Z\|_{\dot\B^{\frac d2}_{2,1}}+
C\int_0^t\|(Z_2,W)\|_{\dot\B^{\frac d2-1}_{2,1}}\|Z\|_{\dot\B^{\frac d2}_{2,1}}.
\end{multline}
Since
 \begin{equation}\label{eq:Z2}
 Z_2= W-\sum_{j=1}^d \mathds{L}^{-1}\bigl(A^j_{2,1}(V)\d_jZ_1+A_{2,2}^j(V)\d_jZ_2\bigr)+\mathds{L}^{-1}Q(Z)\end{equation} 
and $\|Z\|_{\dot{\mathbb{B}}^{\frac d2}_{2,1}}$ is small, we have for all $\sigma\in]-d/2,d/2],$ 
\begin{equation}\label{eq:Wsigma}
\|W-Z_2\|_{\dot\B^{\sigma}_{2,1}}\lesssim \|\nabla Z\|_{\dot\B^{\sigma}_{2,1}}+\|Z\|_{\dot{\mathbb{B}}^{\frac d2}_{2,1}}\|Z_2\|_{\dot{\mathbb{B}}^\sigma_{2,1}}.\end{equation}
Hence, $W$ may be omitted in the last term of Inequality \eqref{eq:Wd2}, and \eqref{eq:Wd2-1} becomes  
 \begin{multline}\label{eq:Wd2-1b}
\cW^{\frac d2-1}(t)+ \kappa_0\int_0^t \|W\|_{\dot{\mathbb{B}}^{\frac{d}{2}-1}_{2,1}}^\ell \leq
\cW^{\frac d2-1}(0)+C\int_0^t\|(\nabla Z,W)\|_{\dot{\mathbb{B}}^{\frac{d}{2}}_{2,1}}^\ell\\+ C\int_0^t\|\nabla Z\|_{\dot\B^{\frac d2}_{2,1}}\|Z\|_{\dot\B^{\frac d2}_{2,1}}+ C\int_0^t \|Z\|_{\dot\B^{\frac d2}_{2,1}}^2+C\int_0^t\|Z_2\|_{\dot\B^{\frac d2-1}_{2,1}}\|Z\|_{\dot\B^{\frac d2}_{2,1}}.
\end{multline}

\subsubsection{Global a priori estimates}
We are now ready  to establish the following proposition which will be the key 
to the proof of the existence part of  Theorem \ref{ThmGlobal}.  
\begin{Prop}\label{g:bound}
 Let $Z$ be a smooth solution of \eqref{eq:Z}  on $[0,T]$ satisfying the smallness condition \eqref{eq:smallZ}. 
 Then, there exist three  (small) positive parameters $\kappa_0,$ $\e$ and $\e'$ such that 
 $$ \wt\cL\triangleq \cL + \e\cW^{\frac d2}+ \e' \cW^{\frac d2-1}$$ with $\cL$  and $\cW^\sigma$
 defined in  \eqref{eq:cL} and \eqref{eq:Ws}, respectively, satisfies for all $0\leq t_0\leq t\leq T,$   
 \begin{equation}\label{eq:wtLt}\wt\cL(t)+\kappa_0\int_{t_0}^t\bigl(\|Z\|_{\dot\B^{\frac d2+1}_{2,1}}
 +\e\|W\|_{\dot\B^{\frac d2}_{2,1}}^\ell+\e'\|W\|_{\dot\B^{\frac d2-1}_{2,1}}^\ell\bigr)
 \leq \wt\cL(t_0).\end{equation}
  Furthermore, there exists a positive constant $C$ such that
\begin{equation} \label{eq:Xt} \cZ(t)\leq C \cZ_0\quad\hbox{for all }\ t\in[0,T],\end{equation}
where $\cZ_0$ and $\cZ$ have been defined in \eqref{eq:smalldata} and \eqref{eq:X}, respectively. 
\end{Prop}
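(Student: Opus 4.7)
\medbreak
The plan is to add the three integral inequalities \eqref{eq:cL}, \eqref{eq:Wd2} and \eqref{eq:Wd2-1b} with weights $1$, $\e$ and $\e'$ respectively, and then to absorb all right-hand-side contributions into the dissipation on the left, using the smallness \eqref{eq:smallZ} for the nonlinear terms and the freedom in choosing $\e,\e'$ for the linear ones. The delicate point is the $\|Z_2\|^\ell_{\dot\B^{\frac d2-1}_{2,1}}\|Z\|_{\dot\B^{\frac d2}_{2,1}}$ term generated by \eqref{eq:cL}: its low-frequency regularity level is lower than that of the $\|Z\|_{\dot\B^{\frac d2+1}_{2,1}}$ parabolic dissipation available in $\cL$, so it cannot be closed from $\cL$ alone. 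This is precisely where the damped mode $W$ and the identity \eqref{eq:Wsigma} relating $Z_2$ and $W$ enter.

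\medbreak
After summing, the purely linear contributions $\|\nabla^2 Z\|^\ell_{\dot\B^{\frac d2}_{2,1}}$, $\|\nabla Z\|^\ell_{\dot\B^{\frac d2}_{2,1}}$ and $\|\nabla W\|^\ell_{\dot\B^{\frac d2}_{2,1}}$ coming from Proposition \ref{PropW} are, by the low-frequency comparison \eqref{eq:comparaison}, dominated by $\|Z\|^\ell_{\dot\B^{\frac d2+1}_{2,1}}$ and $\|W\|^\ell_{\dot\B^{\frac d2}_{2,1}}$, and therefore fit into the left-hand side provided one first fixes $\e$ small relative to $\kappa_0$ and then $\e'$ small relative to $\e$ (accepting a harmless reduction of $\kappa_0$). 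Quadratic terms carrying a factor $\|Z\|_{\dot\B^{\frac d2}_{2,1}}$ are dealt with by the interpolation
\begin{equation*}
\|Z\|_{\dot\B^{\frac d2}_{2,1}} \lesssim \sqrt{\cL\,\|Z\|_{\dot\B^{\frac d2+1}_{2,1}}}
\end{equation*}
followed by Young's inequality, so that they become $\cL\cdot\|Z\|_{\dot\B^{\frac d2+1}_{2,1}}$, which is absorbed by the dissipation as long as $\wt\cL$ stays small along the bootstrap. For the bad term I would apply \eqref{eq:Wsigma} at $\sigma=\frac d2-1$, yielding
\begin{equation*}
\|Z_2\|^\ell_{\dot\B^{\frac d2-1}_{2,1}} \lesssim \|W\|^\ell_{\dot\B^{\frac d2-1}_{2,1}}+\|\nabla Z\|^\ell_{\dot\B^{\frac d2-1}_{2,1}}+\|Z\|_{\dot\B^{\frac d2}_{2,1}}\|Z_2\|_{\dot\B^{\frac d2-1}_{2,1}}.
\end{equation*}
After multiplication by $\|Z\|_{\dot\B^{\frac d2}_{2,1}}$, the first piece is absorbed by the new dissipation $\kappa_0\e'\|W\|^\ell_{\dot\B^{\frac d2-1}_{2,1}}$ using that $\|Z\|_{\dot\B^{\frac d2}_{2,1}}$ is small, the second is bounded by $\|\nabla Z\|^\ell_{\dot\B^{\frac d2-1}_{2,1}}\leq\|Z\|^\ell_{\dot\B^{\frac d2}_{2,1}}\leq\cL$ times $\|Z\|_{\dot\B^{\frac d2}_{2,1}}$ and is handled by the same Young inequality as above, while the third is cubic in the smallness. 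This produces \eqref{eq:wtLt} up to renaming $\kappa_0$.

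\medbreak
To derive \eqref{eq:Xt}, I observe that by \eqref{eq:equivalence} and the smallness of $\e,\e'$ the functional $\wt\cL$ is equivalent to $\|Z\|^\ell_{\dot\B^{\frac d2-1}_{2,1}}+\|Z\|^h_{\dot\B^{\frac d2+1}_{2,1}}$, so that $\wt\cL(0)\lesssim \cZ_0$, and \eqref{eq:wtLt} at $t_0=0$ directly furnishes the four contributions of $\cZ$ involving $Z$ in $L^\infty_t$, $Z$ in $L^1_t(\dot\B^{\frac d2+1}_{2,1})$ and $W$ in $L^1_t(\dot\B^{\frac d2-1}_{2,1})$. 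For the remaining term $\|Z_2\|^\ell_{L^1_t(\dot\B^{\frac d2}_{2,1})}$, I would apply \eqref{eq:Wsigma} at $\sigma=\frac d2$ and integrate in time, using the bound on $\|W\|^\ell_{L^1_t(\dot\B^{\frac d2}_{2,1})}$ provided by the $\e$-weighted part of the dissipation together with $\|\nabla Z\|^\ell_{L^1_t(\dot\B^{\frac d2}_{2,1})}\leq\|Z\|_{L^1_t(\dot\B^{\frac d2+1}_{2,1})}$. Finally $\|Z_2\|^\ell_{L^2_t(\dot\B^{\frac d2-1}_{2,1})}$ follows from the Hölder-in-time interpolation $\|f\|_{L^2_t}\leq\|f\|^{1/2}_{L^\infty_t}\|f\|^{1/2}_{L^1_t}$ applied to $f=\|Z_2\|^\ell_{\dot\B^{\frac d2-1}_{2,1}}$, after using \eqref{eq:Wsigma} once more to trade $Z_2$ for $W$ and $\nabla Z$ in the $L^1_t$ norm. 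The main obstacle, as indicated above, is the first paragraph's closure of the $\|Z_2\|^\ell_{\dot\B^{\frac d2-1}_{2,1}}$ term; once the damped-mode estimates are in hand, the remainder is careful bookkeeping of the order in which $\kappa_0$, $\e$, $\e'$ and the bootstrap smallness are fixed.
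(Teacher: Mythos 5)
Your argument is structurally the same as the paper's: weight \eqref{eq:cL}, \eqref{eq:Wd2}, \eqref{eq:Wd2-1b} by $1,\e,\e'$, pick $\e'$ small relative to $\e$ and $\e$ small relative to $\kappa_0$ to absorb the linear contributions, invoke \eqref{eq:interpoZ} and the smallness to shed the quadratic terms carrying $\|Z\|_{\dot\B^{\frac d2}_{2,1}}$, and handle the critical term $\|Z_2\|^\ell_{\dot\B^{\frac d2-1}_{2,1}}\|Z\|_{\dot\B^{\frac d2}_{2,1}}$ via \eqref{eq:Wsigma} and the damped-mode dissipation; that is precisely the paper's route.

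There is one genuine gap, localized in the very last step (the bound for $\|Z_2\|^\ell_{L^2_t(\dot\B^{\frac d2-1}_{2,1})}$). You propose to interpolate \emph{in time only}, writing $\|f\|_{L^2_t}\leq\|f\|^{1/2}_{L^\infty_t}\|f\|^{1/2}_{L^1_t}$ with $f=\|Z_2\|^\ell_{\dot\B^{\frac d2-1}_{2,1}}$ and then applying \eqref{eq:Wsigma} to control the $L^1_t$ factor. But the $\nabla Z$ contribution coming out of \eqref{eq:Wsigma} at $\sigma=\frac d2-1$ gives $\|\nabla Z\|^\ell_{L^1_t(\dot\B^{\frac d2-1}_{2,1})}\simeq \|Z\|^\ell_{L^1_t(\dot\B^{\frac d2}_{2,1})}$, and this quantity is \emph{not} controlled by $\wt\cL$: by \eqref{eq:comparaison} the low-frequency $\dot\B^{\frac d2}_{2,1}$ norm is \emph{larger} than the $\dot\B^{\frac d2+1}_{2,1}$ norm, and only the latter is integrable in time. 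Equivalently, the quantity $\cZ$ you are trying to close contains $\|Z_2\|^\ell_{L^1_t(\dot\B^{\frac d2}_{2,1})}$ but not $\|Z_2\|^\ell_{L^1_t(\dot\B^{\frac d2-1}_{2,1})}$, and the latter cannot be deduced from the former in low frequencies. The way around is to apply the damped-mode decomposition \eqref{eq:Wsigma} \emph{before} interpolating and then interpolate the $\nabla Z$ piece simultaneously in time and in regularity, i.e.
\begin{equation*}
\|\nabla Z\|_{L^2_t(\dot\B^{\frac d2-1}_{2,1})}\lesssim
\sqrt{\|Z\|_{L^\infty_t(\dot\B^{\frac d2-1}_{2,1})}\,\|Z\|_{L^1_t(\dot\B^{\frac d2+1}_{2,1})}},
\end{equation*}
and treat $\|W\|^\ell_{L^2_t(\dot\B^{\frac d2-1}_{2,1})}$ by Cauchy--Schwarz in time between its $L^\infty_t$ and $L^1_t$ bounds, both of which are directly controlled. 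Everything else you wrote (including the $\sigma=\frac d2$ step for the $L^1_t(\dot\B^{\frac d2}_{2,1})$ bound on $Z_2$) is sound.
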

\begin{proof}
{}From \eqref{eq:cL}, \eqref{eq:Wd2}, \eqref{eq:Wd2-1b} and \eqref{eq:Z2}, we  get after a few simplifications, 
$$\displaylines{
\wt\cL(t) + \kappa_0\int_0^t\Bigl(\|Z\|_{\dot\B^{\frac d2+1}_{2,1}} + \e\|W\|_{\dot\B^{\frac d2}_{2,1}}^\ell
+ \e'\|W\|_{\dot\B^{\frac d2-1}_{2,1}}^\ell\Bigr) 
\leq \wt\cL(0)
+C(\e+\e')\int_0^t\|Z\|_{\dot\B^{\frac d2+1}_{2,1}} 
\hfill\cr\hfill+C\e'\int_0^t\|W\|_{\dot\B^{\frac d2}_{2,1}}^\ell
 +C \int_0^t\|Z\|_{\dot\B^{\frac d2+1}_{2,1}}\cL
 + C\int_0^t \|Z_2\|_{\dot\B^{\frac d2-1}_{2,1}}^\ell \|Z\|_{\dot\B^{\frac d2}_{2,1}}.}$$
 Hence, choosing (positive) $\e$ and $\e'$ so that
 $$ 2C\e' \leq\kappa_0\e\andf 2C(\e+\e')\leq\kappa_0,$$
 using again \eqref{eq:Wsigma}
and the interpolation inequality  \eqref{eq:interpoZ} eventually yields:
 \begin{multline}\label{eq:cL1}
\wt\cL(t) + \kappa_0\int_0^t\Bigl(\|Z\|_{\dot\B^{\frac d2+1}_{2,1}} + \e\|W\|_{\dot\B^{\frac d2}_{2,1}}^\ell
+ \e'\|W\|_{\dot\B^{\frac d2-1}_{2,1}}^\ell\Bigr) 
\leq \wt\cL(0)\\
 +C \int_0^t\bigl(\|Z\|_{\dot\B^{\frac d2+1}_{2,1}}+\|W\|_{\dot\B^{\frac d2-1}_{2,1}}^\ell\bigr) \cL.\end{multline}
Let us denote  
$$T_0\triangleq \sup\bigl\{ t\in[0,T],\;      \sup_{\tau\in[0,t]} \wt\cL(\tau)\leq 2\wt\cL(0)\bigr\}\cdotp$$ 
Discarding the trivial case $\wt\cL(0)=0$ (corresponding to the stationary solution $\bar V$), the continuity of $\wt\cL$
ensures that $T_0>0.$ 
Now, for all $t\in[0,T_0],$ Inequality \eqref{eq:cL1} ensures that $$
\wt\cL(t) + \kappa_0\int_0^t\Bigl(\|Z\|_{\dot\B^{\frac d2+1}_{2,1}} + \e\|W\|_{\dot\B^{\frac d2}_{2,1}}^\ell
+ \e'\|W\|_{\dot\B^{\frac d2-1}_{2,1}}^\ell\Bigr) 
\leq \wt\cL(0) + 2C\wt\cL(0)\int_0^t\bigl(\|Z\|_{\dot\B^{\frac d2+1}_{2,1}}+\|W\|_{\dot\B^{\frac d2-1}_{2,1}}^\ell\bigr)\cdotp$$
Consequently, if the initial data are so small that
$4C\wt\cL(0) \leq \e'\kappa_0,$ then we deduce that
 $$\wt\cL(t) + \frac{\kappa_0}2\int_0^t\Bigl(\|Z\|_{\dot\B^{\frac d2+1}_{2,1}} + \e\|W\|_{\dot\B^{\frac d2}_{2,1}}^\ell
+ \e'\|W\|_{\dot\B^{\frac d2-1}_{2,1}}^\ell\Bigr)  \leq \wt\cL(0),$$
and thus  $T_0=T.$  Hence  \eqref{eq:wtLt} holds (with $\kappa_0/2$) on $[0,T].$
Clearly, the argument may be started from any time $t_0\in[0,T],$ which gives  \eqref{eq:wtLt} in full generality.
\medbreak
Let us finally establish \eqref{eq:Xt}. First, since $\cL$ is equivalent to $\|Z\|_{\dot\B^{\frac d2-1}_{2,1}}^\ell
+ \|Z\|_{\dot\B^{\frac d2-1}_{2,1}}^h,$ it is easy to see that, under Assumption \eqref{eq:smallZ},
we also have $\wt\cL \simeq  \|Z\|_{\dot\B^{\frac d2-1}_{2,1}}^\ell
+ \|Z\|_{\dot\B^{\frac d2+1}_{2,1}}^h.$ Combining with  \eqref{eq:wtLt}, we thus  already get
$$\norme{Z}^\ell_{L^\infty_t(\dot{\mathbb{B}}^{\frac{d}{2}-1}_{2,1})}+\norme{Z}^h_{L^\infty_t(\dot{\mathbb{B}}^{\frac{d}{2}+1}_{2,1})} +\norme{Z}_{L^1_t(\dot{\mathbb{B}}^{\frac{d}{2}+1}_{2,1})}+\norme{W}^\ell_{L^1_t(\dot{\mathbb{B}}^{\frac{d}{2}-1}_{2,1})}
\leq C\cZ_0\quad\hbox{for all }\ t\in[0,T].$$
Combining with  \eqref{eq:Wsigma},  we  discover that 
$$ \int_0^t \|Z_2\|^\ell_{\dot\B^{\frac d2}_{2,1}}
 \leq   \int_0^t \|W\|^\ell_{\dot\B^{\frac d2}_{2,1}}
+   C \int_0^t \|\nabla Z\|^\ell_{\dot\B^{\frac d2}_{2,1}}+C\int_0^t \|Z\|_{\dot{\mathbb{B}}^{\frac d2}_{2,1}}\|Z_2\|_{\dot{\mathbb{B}}^{\frac d2}_{2,1}}\lesssim \cZ_0$$
and
$$\displaylines{\|Z_2\|_{L^2_t(\dot{\mathbb{B}}^{\frac d2-1}_{2,1})}^\ell \leq \|W\|_{L^2_t(\dot{\mathbb{B}}^{\frac d2-1}_{2,1})}^\ell
+C\|\nabla Z\|_{L^2_t(\dot{\mathbb{B}}^{\frac d2-1}_{2,1})}\hfill\cr\hfill+C\|Z\|_{L^\infty_T(\dot{\mathbb{B}}^{\frac d2}_{2,1})}\|Z_2\|^h_{L^2_T(\dot{\mathbb{B}}^{\frac{d}{2}-1}_{2,1})}
+C\|Z\|_{L^\infty_T(\dot{\mathbb{B}}^{\frac d2}_{2,1})}\|Z_2\|^\ell_{L^2_T(\dot{\mathbb{B}}^{\frac{d}{2}-1}_{2,1})}.}$$
Owing to \eqref{eq:smallZ}, the last term may be absorbed by the left-hand side. Furthermore, one can 
bound the last but one thanks to \eqref{eq:comparaison} and, 
by H\"older inequality, interpolation and \eqref{eq:wtLt}, 
$$\begin{aligned}
\|\nabla Z\|_{L^2_t(\dot{\mathbb{B}}^{\frac d2-1}_{2,1})}&\lesssim
 \sqrt{ \|Z\|_{L^\infty_t(\dot{\mathbb{B}}^{\frac d2-1}_{2,1})}
  \|Z\|_{L^1_t(\dot{\mathbb{B}}^{\frac d2+1}_{2,1})}}\lesssim\cZ_0\\ 
 \|W\|_{L^2_t(\dot{\mathbb{B}}^{\frac d2-1}_{2,1})}^\ell &\leq \sqrt{ \|W\|_{L^\infty_t(\dot{\mathbb{B}}^{\frac d2-1}_{2,1})}^\ell 
  \|W\|_{L^1_t(\dot{\mathbb{B}}^{\frac d2-1}_{2,1})}^\ell} \lesssim \cZ_0, 
  \end{aligned}
  $$
  which completes the proof of the proposition. 
    \end{proof}


\subsection{Proof of  Theorem \ref{ThmGlobal}}

The starting point of the proof of existence is the following local well-posedness result 
that may be found in \cite{XK1}.
\begin{Prop} \label{ExistLocXK}
 For any data $Z_0$ in the nonhomogeneous Besov space $\mathbb{B}^{\frac{d}{2}+1}_{2,1}$, the following
 results hold true:
 \begin{enumerate}
\item Existence: there exists a positive time $T_1$, depending only  
the coefficients of the matrices $A^j,$ on $H$ and on $\norme{Z_0}_{\mathbb{B}^{\frac{d}{2}+1}_{2,1}}$
such that System \eqref{eq:Z} has a unique classical solution $Z$ with 
$$ Z \in \mathcal{C}^1([0, T_1] \times \mathbb{R}^d) \andf Z\in\mathcal{ C}([0,T_1];\mathbb{B}^{\frac{d}{2}+1}_{2,1})\cap\mathcal{ C}^1([0,T_1];\mathbb{B}^{\frac{d}{2}}_{2,1}).$$
\item Blow-up criterion:
if $T^*$ is finite, then
$$
 \int_0^{T^*}\norme{\nabla Z}_{L^\infty}dt=\infty.$$
\end{enumerate}
\end{Prop}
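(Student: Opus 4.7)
The plan is to adapt the standard iterative scheme for quasilinear symmetrizable hyperbolic systems to the critical Besov setting $\mathbb{B}^{\frac d2+1}_{2,1}$, which lies just above the Lipschitz embedding $\mathbb{B}^{\frac d2+1}_{2,1}\hookrightarrow \mathcal{C}^{0,1}$. Concretely, I would construct an approximating sequence $(Z^n)_{n\in\mathbb{N}}$ by setting $Z^0\equiv 0$ and defining $Z^{n+1}$ to be the unique solution of the \emph{linear} symmetric hyperbolic system
\[
\wt A^0(\bar V+Z^n)\d_tZ^{n+1}+\sum_{j=1}^d\wt A^j(\bar V+Z^n)\d_jZ^{n+1}+\mathds{L} Z^{n+1}=r(Z^n),\qquad Z^{n+1}_{|t=0}=Z_0.
\]
Each linear problem is classically well-posed in the desired functional framework on some time interval depending on $Z^n$, thanks to the symmetry of the $\wt A^j$ and the positive definiteness of $\wt A^0$.

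The core of the work is to derive a uniform bound for $(Z^n)$ in $L^\infty_{T_1}(\mathbb{B}^{\frac d2+1}_{2,1})$ on an interval $T_1$ depending only on $\|Z_0\|_{\mathbb{B}^{\frac d2+1}_{2,1}}$ and on the structural constants. This is obtained by localizing via $\dot\Delta_q$, performing the $L^2_{\wt A^0}$ energy estimate analogous to Proposition \ref{L2estimateProp} (with $Z^{n+1}$ in place of $Z$ and $\bar V+Z^n$ as the frozen coefficient), then summing with the weights $2^{q(\frac d2+1)}$ in $\ell^1(\mathbb{Z})$. The key technical ingredient is the commutator estimate
\[
\bigl\|\bigl[\wt A^j(\bar V+Z^n),\dot\Delta_q\bigr]\d_jZ^{n+1}\bigr\|_{L^2}
\lesssim c_q 2^{-q(\frac d2+1)}\norme{\nabla Z^n}_{\mathbb{B}^{\frac d2}_{2,1}}\norme{Z^{n+1}}_{\mathbb{B}^{\frac d2+1}_{2,1}},
\]
which works precisely because $\frac d2+1$ is above the critical index, together with composition estimates for $\wt A^j$ and the quadratic remainder $r$. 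A standard bootstrap argument then yields the uniform bound on a short time interval $T_1$, and the lower-order damping term $\mathds{L}$ is harmless at this stage since it acts on a fixed $n_2$-dimensional subspace.

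Next, I would show that $(Z^n)$ is Cauchy in the weaker norm $L^\infty_{T_1}(\mathbb{B}^{\frac d2}_{2,1})$ by writing the equation satisfied by $Z^{n+1}-Z^n$ and performing the energy estimate at one regularity level below; the loss of one derivative is standard and absorbed by the uniform higher-regularity bound. The limit $Z$ inherits the uniform bound (hence lies in $L^\infty_{T_1}(\mathbb{B}^{\frac d2+1}_{2,1})$), and $\mathcal{C}^1$ time regularity follows directly from the equation, which identifies $\d_tZ$ in $\mathcal{C}([0,T_1];\mathbb{B}^{\frac d2}_{2,1})$ once product and composition laws are applied to the right-hand side. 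Uniqueness follows from the same $\mathbb{B}^{\frac d2}_{2,1}$ difference estimate.

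For the blow-up criterion, I would revisit the basic energy estimate but replace the rough bound $\norme{\nabla\wt A^j(V)}_{\mathbb{B}^{\frac d2}_{2,1}}\lesssim \norme{Z}_{\mathbb{B}^{\frac d2+1}_{2,1}}$ in the commutator term by a sharper logarithmic interpolation of the form
\[
\norme{Z(t)}_{\mathbb{B}^{\frac d2+1}_{2,1}}\leq C\norme{Z_0}_{\mathbb{B}^{\frac d2+1}_{2,1}}\exp\biggl(C\int_0^t\bigl(1+\norme{\nabla Z(\tau)}_{L^\infty}\bigr)d\tau\biggr),
\]
obtained via Gronwall after expressing the commutators with a single loss of derivative absorbed by $\|\nabla Z\|_{L^\infty}$. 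This prevents blow-up as long as $\int_0^{T^\ast}\norme{\nabla Z}_{L^\infty}\,dt<\infty$. The main obstacle is the bookkeeping of commutator and composition estimates exactly at the critical regularity, in particular ensuring that all remainder terms can be absorbed either by the a priori bound or by $\|\nabla Z\|_{L^\infty}$ in the blow-up argument; the remaining steps are essentially parallel to known iterative schemes for symmetric hyperbolic systems in $\mathbb{B}^{\frac d2+1}_{2,1}$.
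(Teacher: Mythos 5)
The paper does not prove Proposition~\ref{ExistLocXK}: it is quoted verbatim as a known local well-posedness result and attributed to the paper of Kawashima and Xu \cite{XK1} (``the following local well-posedness result that may be found in \cite{XK1}''). There is therefore no internal proof to compare against. Your sketch reconstructs the standard Friedrichs-type iterative scheme for quasilinear symmetrizable hyperbolic systems at the critical Besov index $\mathbb{B}^{\frac d2+1}_{2,1}$ (linearize, uniform bound via commutator and composition estimates at level $\frac d2+1$, Cauchy sequence one level lower, blow-up via a Gronwall argument where the losses are paid by $\norme{\nabla Z}_{L^\infty}$), which is exactly the kind of argument the cited reference and standard texts on Besov hyperbolic theory carry out; the presence of the zeroth-order damping $\mathds{L}$ is indeed harmless for local theory as you note. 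Two minor points to tighten were you to write this out: you should work with the nonhomogeneous blocks $\Delta_q$ (with a single low-frequency piece) rather than $\dot\Delta_q$ summed over all of $\mathbb{Z}$, since the target space is nonhomogeneous; and the blow-up refinement requires a genuinely sharper commutator bound (controlled by $\norme{\nabla Z}_{L^\infty}$ plus at most a logarithm of the high norm) rather than merely a different interpolation of the same rough estimate—this is the part that actually needs care, as you yourself flag.
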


The proof of the existence part   of Theorem \ref{ThmGlobal} is structured as follows. First, 
we truncate the low frequencies of the data and use the above theorem to construct
a sequence $(Z^n)_{n\in\N}$ of (a priori local) approximate solutions. Then we use the previous part 
to establish that those solutions are actually global and uniformly bounded in $E.$  
In order to pass to the limit, we show that  $(Z^n)_{n\in\N}$ is a Cauchy sequence 
in $\cC([0,T];\dot\B^{\frac d2}_{2,1})$ for all $T>0.$ 
Then, we eventually check that the limit is indeed a solution of \eqref{eq:Z} and has  the required regularity.

\subsubsection*{First step. Construction of approximate solutions} 

Fix some initial data $Z_0\in \dot{\mathbb{B}}^{\frac{d}{2}-1}_{2,1}\cap\dot{\mathbb{B}}^{\frac{d}{2}+1}_{2,1}$ satisfying \eqref{eq:smalldata} and approximate it by
$$Z_{0}^n=({\rm Id}-\dot S_n)Z_0, \qquad n\geq1.$$
By construction,  $Z_{0,n}$ belongs to $\B^{\frac d2+1}_{2,1}.$ 
Consequently,  Theorem \ref{ExistLocXK} provides us with a unique maximal solution $Z^n\in \mathcal{C}([0,T_n[;{\mathbb{B}^{\frac{d}{2}+1}_{2,1}})\cap\mathcal{C}^1([0,T_n[;{\mathbb{B}^{\frac{d}{2}}_{2,1}})$. 

\subsubsection*{Second step. Uniform estimates} 
 Taking advantage of Proposition \ref{g:bound} and denoting by $\cZ^n$ the function $\cZ$ pertaining to $Z^n,$  we get  
 $\cZ^n\leq C\cZ^n_0$ as long as  $Z^n$ satisfies the smallness condition \eqref{eq:smallZ}. 
   Owing to the definition of $Z_0^n,$  we have $\cZ^n_0\leq\cZ_0$
   and we clearly have  $\|Z^n(t)\|_{\dot\B^{\frac d2}_{2,\infty}} \lesssim \cZ^n(t).$
    Hence using  a classical bootstrap argument, one can conclude that, if  $\cZ_0$ is small enough, then 
\begin{equation}\label{X22}
 \cZ^n(t)\leq  C\cZ_0, \quad\hbox{for all }\ t\in[0,T_n[.
\end{equation}
 In order to show that the solution $Z^n$ is global (that is  $T_n=+\infty$), one can 
  use the blow-up criterion 
of Theorem \ref{ExistLocXK}. However, we first have to justify that the nonhomogeneous Besov norm $\B^{\frac{d}{2}+1}_{2,1}$ of the solution is under control \emph{up to time $T_n.$}
Indeed, using the classical energy method for \eqref{eq:Z},  then the Gronwall lemma, 
we discover that   for all $t<T_n,$ 
 $$\norme{Z^n(t)}_{L^2}\leq   C\norme{Z^n_0}_{L^2} \exp\biggl(C\int_0^t\norme{\nabla Z^n}_{L^\infty}\biggr)\cdotp$$ 
 Now,  \eqref{X22} and the embedding of $\dot\B^{\frac d2}_{2,1}$ in $L^\infty$ ensure that $\nabla \cZ^n$ is in $L^1_{T_n}(L^\infty),$ from which  we deduce that 
  $Z^n$ is in  $L^\infty_{T_n}(L^2),$ and thus in 
  $L^\infty_{T_n}({\mathbb{B}}^{\frac{d}{2}+1}_{2,1})$ owing, again,  to \eqref{X22}. 
It is now easy to conclude :  we have $\nabla  Z^n$ in  $L^1_{T_n}(L^\infty),$ and 
$Z^n$  is in 
$\mathcal{ C}([0,T];\mathbb{B}^{\frac{d}{2}+1}_{2,1})\cap\mathcal{ C}^1([0,T];\mathbb{B}^{\frac{d}{2}}_{2,1})$ for all $T<T_n.$ Hence $T_n=+\infty$ and   \eqref{X22} is satisfied for all time.

\subsubsection*{Third step. Convergence} 
The following stability result will ensure 
both  the convergence of $(Z^n)_{n\in\mathbb{N}}$ and the uniqueness of our solution.
\begin{Prop} \label{PropUniq}Let $\wt{Z}=Z^1-Z^2$ where $Z^1$ and $Z^2$ are two solutions of \eqref{eq:Z}, having respectively $Z_{0}^1$ and $Z_{0}^2$ as initial data,
and belonging to the space $E$. There exists a constant $c$ such that 
if both $\|Z^1\|_{L^\infty_T(\dot\B^{\frac d2}_{2,1})}$ and $\|Z^2\|_{L^\infty_T(\dot\B^{\frac d2}_{2,1})}$
 are smaller than $c,$ then we have for all $t\in[0,T],$ 
\begin{eqnarray}\label{eq:Gronwall} \|{\wt{Z}}\|_{L^\infty_t(\dot{\mathbb{B}}^{\frac{d}{2}}_{2,1})}\lesssim \|{\wt{Z}_0}\|_{\dot{\mathbb{B}}^{\frac{d}{2}}_{2,1}}+ \int_0^t\biggl( \norme{(Z^1,Z^2)}^h_{\dot{\mathbb{B}}^{\frac{d}{2}+1}_{2,1}}+\norme{(Z^1,Z^2)}^\ell_{\dot{\mathbb{B}}^{\frac{d}{2}}_{2,1}}\biggr)\norme{\wt{Z}}_{\dot{\mathbb{B}}^{\frac{d}{2}}_{2,1}}.\end{eqnarray}
\end{Prop}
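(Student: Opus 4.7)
The strategy is a standard $L^2$-energy estimate on the difference $\wt Z\triangleq Z^1-Z^2,$ performed after Littlewood-Paley localization at the critical level $\dot\B^{d/2}_{2,1},$ in the exact spirit of Proposition \ref{L2estimateProp}. Subtracting the two copies of \eqref{eq:Z} produces
$$\wt A^0(V^1)\d_t\wt Z+\sum_{j=1}^d\wt A^j(V^1)\d_j\wt Z+L\wt Z=\delta F,$$
with the difference source term
$$\delta F\triangleq-\bigl(\wt A^0(V^1)-\wt A^0(V^2)\bigr)\d_tZ^2-\sum_{j=1}^d\bigl(\wt A^j(V^1)-\wt A^j(V^2)\bigr)\d_jZ^2+r(Z^1)-r(Z^2).$$

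Applying $\ddq$ and taking the scalar product with $\wt Z_q\triangleq\ddq\wt Z,$ then using the symmetry of $\wt A^j(V^1)$ and discarding the nonnegative contribution $(L\wt Z_q|\wt Z_q)\geq 0,$ one obtains, mimicking \eqref{eq:Zq1},
$$\frac12\frac d{dt}\|\wt Z_q\|_{L^2_{\wt A^0(V^1)}}^2\lesssim\bigl(\|R^1_q\|_{L^2}+\|R^2_q\|_{L^2}+\|\ddq\delta F\|_{L^2}\bigr)\|\wt Z_q\|_{L^2}+\|(\d_tZ^1,\nabla Z^1)\|_{L^\infty}\|\wt Z_q\|_{L^2}^2,$$
where $R^1_q\triangleq\sum_j[\wt A^j(V^1),\ddq]\d_j\wt Z$ and $R^2_q\triangleq[\wt A^0(V^1),\ddq]\d_t\wt Z$ are the standard commutators. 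After Lemma \ref{SimpliCarre}, multiplying by $2^{qd/2}$ and summing over $q\in\Z$ reduces matters to bounding $\|\delta F\|_{\dot\B^{d/2}_{2,1}}$ together with the two commutator contributions at regularity $d/2.$

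For these estimates, the product, composition and commutator laws at the threshold level $d/2$ (Propositions \ref{LP}, \ref{Composition}, \ref{ProprV} and Inequality \eqref{eq:com1}), combined with the smallness of $\|Z^i\|_{\dot\B^{d/2}_{2,1}}$ and Lemma \ref{dtZ} (to trade each $\d_t Z^i$ for $\nabla Z^i$ and $Z_2^i$), would produce a global bound of the form
$$\|\delta F\|_{\dot\B^{d/2}_{2,1}}+\sum_{q\in\Z}c_q2^{q\frac d2}\|(R^1_q,R^2_q)\|_{L^2}\lesssim\|\wt Z\|_{\dot\B^{d/2}_{2,1}}\Bigl(\|\nabla(Z^1,Z^2)\|_{\dot\B^{d/2}_{2,1}}+\|(Z_2^1,Z_2^2)\|_{\dot\B^{d/2}_{2,1}}\Bigr).$$
The critical step to match the right-hand side of \eqref{eq:Gronwall} is then to split each factor into its low- and high-frequency parts and to use \eqref{eq:comparaison} to dominate $\|\cdot\|^\ell_{\dot\B^{d/2+1}_{2,1}}$ by $\|\cdot\|^\ell_{\dot\B^{d/2}_{2,1}},$ which turns the prefactor into exactly $\|(Z^1,Z^2)\|^h_{\dot\B^{d/2+1}_{2,1}}+\|(Z^1,Z^2)\|^\ell_{\dot\B^{d/2}_{2,1}}.$ Gronwall's lemma in time would then finish the proof.

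The main obstacle is precisely this last bookkeeping: the critical functional setting of $E$ gives no control of $\|(Z^1,Z^2)\|^\ell_{\dot\B^{d/2+1}_{2,1}}$ in $L^1_T,$ so every product bound must be organized so that the unavoidable loss of one derivative falls only on the high frequencies, while the low frequencies are handled at regularity $d/2$ only. A secondary technical point is the treatment of $R^2_q$: as in the passage from \eqref{L2estimateHF} to \eqref{L2estimateBF}, it is more convenient to freeze the principal coefficient to $\bar A^0$ and to treat the perturbation $(\wt A^0(V^1)-\bar A^0)\d_t\wt Z$ as an additional source term, which sidesteps the need to estimate $\d_t\wt Z$ at regularity below $d/2-1.$
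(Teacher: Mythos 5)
Your proposal follows the same overall route as the paper --- localize, do an energy estimate on $\ddq\wt Z$ at the single exponent $d/2$, use commutator/product/composition bounds, and conclude by Gronwall --- and it is essentially correct in spirit. The one genuine difference is the decomposition of the difference equation. You write the source as $\delta F=-(\wt A^0(V^1)-\wt A^0(V^2))\d_tZ^2-\sum_j(\wt A^j(V^1)-\wt A^j(V^2))\d_jZ^2+r(Z^1)-r(Z^2),$ which forces you to invoke Lemma~\ref{dtZ} to trade $\d_tZ^2$ for $(\nabla Z^2,Z^2_2)$, and raises the $R^2_q=[\ddq,\wt A^0(V^1)]\d_t\wt Z$ commutator that you rightly flag as delicate at regularity $d/2$. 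The paper circumvents both of these by first dividing through by $\wt A^0$ and then re-multiplying by $\wt A^0(V^1)$, so that the off-balance source becomes $\wt A^0(V^1)\sum_j\bigl((\wt A^0(V^1))^{-1}\wt A^j(V^1)-(\wt A^0(V^2))^{-1}\wt A^j(V^2)\bigr)\d_jZ^2,$ with no $\d_tZ^2$ at all; the only remaining commutator is then $[\ddq,\wt A^j(V^1)]\d_j\wt Z,$ handled directly by \eqref{eq:com1} at $s=d/2.$ Your alternative fix of freezing $\bar A^0$ would also work (it is exactly what the paper does for \eqref{L2estimateBF}), but it is not the route taken here for uniqueness. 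Beyond that, your sketch leaves the final bookkeeping --- splitting the prefactors into $\|\cdot\|^h_{\dot\B^{d/2+1}_{2,1}}+\|\cdot\|^\ell_{\dot\B^{d/2}_{2,1}}$ and verifying these are $L^1_T$ for solutions in $E$ --- as the ``main obstacle'' without executing it; this is a real but fillable gap, and once the paper's conjugated form for the source is adopted the accounting becomes routine via Propositions~\ref{LP}, \ref{Composition}, inequality~\eqref{eq:compo} and estimate~\eqref{eq:dr}.
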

\begin{proof} Let $V^1\triangleq \bar V+Z^1$ and $V^2\triangleq \bar V+Z^2.$ Observe that 
$\wt{Z}$ is a solution of
$$\displaylines{\quad
\wt A^0(V^1)\d_t\wt{Z}+\sum_{j=1}^d\wt A^j(V^1)\d_j\wt{Z}\hfill\cr\hfill=-\wt A^0(V^1)\sum_{j=1}^d\bigl(\wt A^0(V^1)^{-1}\wt A^j(V^1)-\wt A^0(V^2)^{-1}\wt A^j(V^2)\bigr)\d_jZ^2-L\wt{Z}+r(Z^1)-r(Z^2).}$$ 
Applying $\dot{\Delta}_q$, taking the scalar product with $\wt{Z}_q$, integrating on $\mathbb{R}_+\times\mathbb{R}^d$ and using Lemma \ref{SimpliCarre}, we get for all $q\in\Z,$
$$\displaylines{\norme{\wt{Z}_q}_{L^2_{\wt A^0(V^1)}}+\kappa_0\int_0^t \norme{L\wt{Z}_q}_{L^2}\leq\norme{\wt{Z}_{0,q}}_{L^2}+\int_0^t \norme{\nabla \wt A^j(V^1)}_{L^\infty}\norme{\wt{Z}_q}_{L^2} \hfill\cr\hfill
+ \int_0^t \biggl\|\dot{\Delta}_q\sum_{j=1}^d\bigl(\wt A^0(V^1)^{-1}\wt A^j(V^1)-\wt A^0(V^2)^{-1}\wt A^j(V^2)\bigr)\d_jZ^{2}\biggr\|_{L^2}\hfill\cr\hfill+ \int_0^t \norme{\dot{\Delta}_q(r(Z^1)-r(Z^2))}_{L^2}
+\sum_j\bigl\|[\ddq,\wt A_j(V^1)]\d_j\wt Z\|_{L^2}.}
$$
Multiplying this inequality  by $2^{q\frac{d}{2}}$ and using commutator estimates, we get
\begin{multline} \label{eq:36} 2^{q\frac{d}{2}}\norme{\wt{Z}_q}_{L^2}\lesssim 2^{q\frac{d}{2}}\norme{\wt{Z}_{0,q}}_{L^2}+\int_0^t \|\nabla Z^1\|_{\dot{\mathbb{B}}^{\frac{d}{2}}_{2,1}}2^{q\frac{d}{2}}\bigr\|\wt{Z}_q\bigr\|_{L^2} \\  +\int_0^t2^{q\frac{d}{2}}  \biggl\|\dot{\Delta}_q\sum_{j=1}^d\left(\wt A^0(V^1)^{-1}\wt A^j(V^1)-\wt A^0(V^2)^{-1}\wt A^j(V^2)\right)\d_jZ^2\biggr\|_{L^2}\\ + \int_0^t  2^{q\frac{d}{2}}\norme{\dot{\Delta}_q(r(Z^1)-r(Z^2))}_{L^2}.\end{multline} 
Thanks to Propositions \ref{LP} and Inequality \eqref{eq:compo}, 
$$\norme{\left(\wt A^0(V^1)^{-1}\wt A^j(V^1)-\wt A^0(V^2)^{-1}\wt A^j(V^2)\right)\d_jZ^2}_{\dot{\mathbb{B}}^{\frac{d}{2}}_{2,1}}\lesssim 
\|\wt{Z}\bigr\|_{\dot{\mathbb{B}}^{\frac{d}{2}}_{2,1}}
\|\nabla Z^2\|_{\dot{\mathbb{B}}^{\frac{d}{2}}_{2,1}}$$
and, according to  Inequality \eqref{eq:dr}, we have 
$$  \norme{r(Z^1)-r(Z^2)}_{\dot{\mathbb{B}}^{\frac{d}{2}}_{2,1}}\lesssim  \bigl\|\wt{Z}\bigr\|_{\dot{\mathbb{B}}^{\frac{d}{2}}_{2,1}}\norme{(Z^1,Z^2)}_{\dot{\mathbb{B}}^{\frac{d}{2}}_{2,1}}.$$
Hence, summing \eqref{eq:36} on $q\in\mathbb{Z}$, we end up with 
$$\displaylines{ \norme{\wt{Z}}_{L^\infty_T(\dot{\mathbb{B}}^{\frac{d}{2}}_{2,1})}\lesssim \norme{\wt{Z}_0}_{\dot{\mathbb{B}}^{\frac{d}{2}}_{2,1}}+\int_0^t \norme{Z^1}_{\dot{\mathbb{B}}^{\frac{d}{2}+1}_{2,1}}\norme{\wt{Z}}_{\dot{\mathbb{B}}^{\frac{d}{2}}_{2,1}}\hfill\cr\hfill+\int_0^t \norme{\wt{Z}}_{\dot{\mathbb{B}}^{\frac{d}{2}}_{2,1}}\norme{\nabla Z^2}_{\dot{\mathbb{B}}^{\frac{d}{2}}_{2,1}}+\int_0^t \norme{\wt{Z}}_{\dot{\mathbb{B}}^{\frac{d}{2}}_{2,1}}\norme{(Z^1,Z^2)}_{\dot{\mathbb{B}}^{\frac{d}{2}}_{2,1}}.}$$ Splitting in low and high frequencies yields the desired estimate.
\end{proof}
The above lemma combined with the fact  that  $(Z_0^n)_{n\in\N}$ converges
to $Z_0$ in $\dot\B^{\frac d2}_{2,1}$ ensures 
 that $(Z^n)_{n\in\mathbb{N}}$ is a Cauchy sequence in $L^\infty_T(\dot{\mathbb{B}}^{\frac{d}{2}}_{2,1})$ and thus has a limit $Z$  in that space, and passing to the limit in \eqref{eq:Z} is straightforward. 
 Furthermore, using  the Fatou property of Besov spaces, we obtain that $Z^\ell\in L^\infty_T(\dot{\mathbb{B}}^{\frac{d}{2}-1}_{2,1})\cap L_T^1(\dot{\mathbb{B}}^{\frac{d}{2}+1}_{2,1}) \;\text{and}\; Z^h\in L^\infty_T(\dot{\mathbb{B}}^{\frac{d}{2}+1}_{2,1})\cap L_T^1(\dot{\mathbb{B}}^{\frac{d}{2}+1}_{2,1})$ for all $T>0,$ together with the desired bounds. 
 Time continuity of the solution may be obtained  by adapting the arguments of \cite[Chap. 4]{HJR}.

\subsubsection*{Fourth step. Uniqueness} Knowing that $Z^1$ and $Z^2$ are in $E,$ we have for all $T>0,$ 
  $$ \int_0^{T}\bigl(\|(Z^1,Z^2)\|_{\dot\B^{\frac{d}{2}}_{2,1}}^\ell
 +\|(Z^1,Z^2)\|_{\dot\B^{\frac{d}{2}+1}_{2,1}}^h\bigr)<\infty.$$
 Furthermore, one can assume with no loss of generality that $Z^1$ is the solution that we constructed before
 and thus satisfies the smallness assumption \eqref{eq:smallZ}. 
 Owing to time continuity and since $Z^2(0)=Z^1(0),$ the solution $Z^2$ also satisfies 
 \eqref{eq:smallZ} on some nontrivial time interval $[0,T],$ and combining
 Inequality \eqref{eq:Gronwall} with  Gronwall lemma allows to conclude that  $Z^1$ and $Z^2$ coincide on $[0,T].$
A  bootstrap argument then yields uniqueness on the whole half-line $\R_+.$  \qed


\subsection{Proof of Theorem \ref{ThmDecay}} \label{ss:decay}

The overall strategy is taken from the work by  Z. Xin and J. Xu in \cite{XuXin}.

\subsubsection*{First step : uniform bound in  $\dot\B^{-\sigma_1}_{2,\infty}$}

In order to establish \eqref{eq:Zs1},  one can look at System \eqref{eq:Z} as 
$$\displaylines{\bar A^0\d_tZ +\sum_{j=1}^d \bar A^j\d_jZ + LZ = f + g+ h\cr
\with  f\triangleq\sum_{j=1}^d\bigl(\bar A^j- \wt A^j(V)\bigr)\partial_{j}Z,\quad \displaystyle g\triangleq r(Z)\andf 
h\triangleq\bigl(\bar A^0-\wt A^0({V})\bigr)\partial_{t}Z,}$$ 
then   apply $\ddq$  and perform $L^2$ estimates for each $Z_q.$
\medbreak
After using Lemma \ref{SimpliCarre}, multiplying by $2^{-q\sigma_1}$ then taking the supremum on $\Z,$
 we end up (omitting the term coming from $L$ that has the `good' sign) with 
\begin{equation}\norme{Z(t)}_{\dot{\mathbb{B}}^{-\sigma_1}_{2,\infty}}\lesssim\norme{Z_0}_{\dot{\mathbb{B}}^{-\sigma_1}_{2,\infty}}+\int_0^t\norme{(f,g,h)}_{\dot{\mathbb{B}}^{-\sigma_1}_{2,\infty}}.
\end{equation}
Setting $f_j=\left( \wt A^j(V)-\bar A^j\right)\partial_{j}Z$ and  using Inequality \eqref{eq:prod3}  yields
$$\norme{f_j}_{\dot{\mathbb{B}}^{-\sigma_1}_{2,\infty}}\lesssim \norme{\wt A^j(V)- \bar A^j)}_{\dot{\mathbb{B}}^{-\sigma_1}_{2,\infty}}\norme{\partial_{j}Z}_{\dot{\mathbb{B}}^{\frac{d}{2}}_{2,1}}.$$

In order to bound $\wt A^j(V)- \bar A^j$ in $\dot\B^{-\sigma_1}_{2,\infty},$ one cannot use directly Proposition \ref{Composition} as  $-\sigma_1$ may be negative.
 However, applying Taylor formula,  product laws and  a composition estimate (see the details in the proof
 of \cite[Th. 4.1]{CBD1}), we can still obtain  if \eqref{eq:smallZ} is satisfied, 
 $$\norme{\wt A^j(V)- \bar A^j}_{\dot{\mathbb{B}}^{-\sigma_1}_{2,\infty}}\lesssim \norme{Z}_{\dot{\mathbb{B}}^{-\sigma_1}_{2,\infty}},$$
whence
$$\norme{f}_{\dot{\mathbb{B}}^{-\sigma_1}_{2,\infty}}\lesssim \norme{Z}_{\dot{\mathbb{B}}^{\frac{d}{2}+1}_{2,1}}\norme{Z}_{\dot{\mathbb{B}}^{-\sigma_1}_{2,\infty}}.
$$
For $g$, using similar arguments as in Proposition \ref{ProprV} combined with  Inequality \eqref{eq:prod3} yield
$$
\norme{g}^\ell_{\dot{\mathbb{B}}^{-\sigma_1}_{2,\infty}} \lesssim \norme{Z}_{\dot{\mathbb{B}}^{-\sigma_1}_{2,\infty}}\norme{Z_2}_{\dot{\mathbb{B}}^{\frac{d}{2}}_{2,1}}.
$$
Concerning $h$, we have, keeping Lemma \ref{dtZ} in mind, that 
$$\begin{aligned}
\norme{h}_{\dot{\mathbb{B}}^{-\sigma_1}_{2,\infty}} &\lesssim \norme{Z}_{\dot{\mathbb{B}}^{-\sigma_1}_{2,\infty}}\norme{\d_tZ}_{\dot{\mathbb{B}}^{\frac{d}{2}}_{2,1}}\\&\lesssim \norme{Z}_{\dot{\mathbb{B}}^{-\sigma_1}_{2,\infty}}\norme{(\nabla Z,Z_2)}_{\dot{\mathbb{B}}^{\frac{d}{2}}_{2,1}}.
\end{aligned}$$
Thus, regrouping all those estimates, we obtain 
\begin{equation}\norme{Z(t)}_{\dot{\mathbb{B}}^{-\sigma_1}_{2,\infty}}\lesssim\norme{Z_0}_{\dot{\mathbb{B}}^{-\sigma_1}_{2,\infty}}+\int_0^t\norme{(\nabla Z,Z_2)}_{\dot{\mathbb{B}}^{\frac{d}{2}}_{2,1}}\norme{Z}_{\dot{\mathbb{B}}^{-\sigma_1}_{2,\infty}},\qquad t\geq0.\end{equation}
Since, as pointed out before, we have
$$
\int_0^t \norme{(\nabla Z,Z_2)}_{\dot{\mathbb{B}}^{\frac{d}{2}}_{2,1}} \lesssim \cZ(t)\lesssim \cZ_0$$
and because the smallness condition \eqref{eq:smalldata} is satisfied, applying Gronwall inequality 
completes the proof of  \eqref{eq:Zs1}.
\medbreak
For the sake of completeness, one has to justify that if $Z_0$ is in $\dot B^{-\sigma_1}_{2,\infty}$ (in 
addition to \eqref{eq:smalldata}), then the solution constructed in Theorem \ref{ThmGlobal}  is 
 in $\dot B^{-\sigma_1}_{2,\infty}$ for all time. 
 This may be checked by following the construction scheme of the previous subsection.
 Indeed, recall that  the approximated solutions $Z^n$ are in $\cC^1(\R_+;\B^{\frac d2}_{2,1}).$ 
 Then, discarding the linear term $LZ^n$ (that may be handled by suitable conjugation), we get 
 $\d_t Z^n \in\cC(\R_+;L^1).$ As $L^1\hookrightarrow \dot B^{-\frac d2}_{2,\infty}$ and $\sigma_1\geq d/2,$ 
 the low frequencies  of $\d_t Z^n$ (and thus the whole $\d_tZ^n$) are  in $\cC(\R_+;\dot B^{-\sigma_1}_{2,\infty})$
 As $Z_0^n$ itself is in  $\dot B^{-\sigma_1}_{2,\infty}$  (since $\cF(Z_0^n)$ is supported away from $0$), 
 we  have $Z^n\in\cC^1(\R_+; \dot B^{-\sigma_1}_{2,\infty}).$
 Consequently, \eqref{eq:Zs1} holds for $Z^n$ and, passing to the limit, ensures that it holds for $Z,$ too.

\subsubsection*{Second step :   proof of  generic decay estimates}

According to Proposition \ref{g:bound}, the functional $\wt\cL$ introduced therein 
is nonincreasing and   equivalent to 
$\|Z\|^\ell_{\dot\B^{\frac d2-1}_{2,1}}+  \|Z\|^h_{\dot\B^{\frac d2+1}_{2,1}}.$ 
Furthermore, there exist positive $\kappa_0,$ $\e$ and $\e'$ such that 
denoting $\wt\cH\triangleq \|Z\|_{\dot\B^{\frac{d}{2}+1}_{2,1}} +\e\|W\|^\ell_{\dot\B^{\frac{d}{2}}_{2,1}}
+\e'\|W\|^\ell_{\dot\B^{\frac{d}{2}-1}_{2,1}},$ we have
$$\wt\cL(t)+\kappa_0\int_{t_0}^{t} \wt\cH\leq\wt\cL(t_0)\quad\hbox{ for all }\ 0\leq t_0\leq t.$$
Hence    and one may conclude as in \cite{CBD1} 
that $\wt\cL$ is differentiable almost everywhere  and satisfies 
\begin{equation}\label{eq:lyapunovdecay}
\frac d{dt}\wt\cL+c'\wt\cH \leq 0\quad\hbox{a. e.  on }\ \R^+.\end{equation}

Granted with this information and \eqref{eq:Zs1}, one can prove the first decay estimate of Theorem \ref{ThmDecay}
by following the general argument of  \cite{XuXin}. 
The starting point is that, provided $-\sigma_1<d/2-1,$ 
$$ \norme{Z}^\ell_{\dot{\mathbb{B}}^{\frac{d}{2}-1}_{2,1}}\lesssim \biggl(\norme{Z}^\ell_{\dot{\mathbb{B}}^{-\sigma_1}_{2,\infty}}\biggr)^{\theta_0}\biggl(\norme{Z}^\ell_{\dot{\mathbb{B}}^{\frac{d}{2}+1}_{2,1}}\biggr)^{(1-\theta_0)} \with \theta_0=\frac{2}{d/2+1+\sigma_1}\cdotp$$
Inequality  \eqref{eq:Zs1} thus implies that  
$$\norme{Z}^\ell_{\dot{\mathbb{B}}^{\frac{d}{2}+1}_{2,1}}\gtrsim  
 \bigl(\norme{Z}^\ell_{\dot{\mathbb{B}}^{\frac{d}{2}-1}_{2,1}}\bigr)^{\frac1{1-\theta_0}} \|Z_0\|_{\dot\B^{-\sigma_1}_{2,\infty}}^{-\frac{\theta_0}{1-\theta_0}}.$$
For  the high frequencies term, using the estimate of Theorem \ref{ThmGlobal}, one can just write:
$$\norme{Z}^h_{\dot{\mathbb{B}}^{\frac{d}{2}+1}_{2,1}}\gtrsim
 \biggl(\norme{Z}^h_{\dot{\mathbb{B}}^{\frac{d}{2}+1}_{2,1}}\biggr)^{\frac{1}{1-\theta_0}}
 \|Z_0\|_{\dot\B^{\frac d2-1}_{2,1}\cap\dot\B^{\frac d2+1}_{2,1}}^{-\frac{\theta_0}{1-\theta_0}}.$$
 Hence, there exists a (small) constant $c$ such that 
 $$\frac{d}{dt} \wt\cL +cC_0^{-\frac{\theta_0}{1-\theta_0}}
 \wt\cL^{\frac{1}{1-\theta_0}}\leq 0\with C_0\triangleq \|Z_0\|_{\dot\B^{-\sigma_1}_{2,\infty}\cap \dot\B^{\frac d2+1}_{2,1}}.$$
Integrating, this gives us
$$\wt\cL(t)\leq \biggl(1+c\,\frac{\theta_0}{1-\theta_0}\biggl(\frac{\wt\cL(0)}{C_0}\biggr)^{\frac{\theta_0}{1-\theta_0}}t\biggr)^{1-\frac1{\theta_0}}\wt\cL(0)$$
whence, since $\wt\cL \leq \wt\cL(0)\lesssim \cZ_0 \lesssim C_0,$
\begin{equation}\norme{Z(t)}^\ell_{\dot{\mathbb{B}}^{\frac{d}{2}-1}_{2,1}}+\norme{Z(t)}^h_{\dot{\mathbb{B}}^{\frac{d}{2}+1}_{2,1}}\lesssim(1+t)^{-\alpha_1}\cZ_0 \with \alpha_1=\frac{d/2-1+\sigma_1}{2}\cdotp\label{Decayalpha}
\end{equation}
 The decay rates in $\dot{\mathbb{B}}^{\sigma}_{2,1}$ for all $\sigma\in ]-\sigma_1,d/2-1]$  follow from 
  Inequalities \eqref{eq:Zs1} and  \eqref{Decayalpha},  and interpolation inequalities.

\subsubsection*{Third step:  decay enhancement for the damped mode}

{}From \eqref{eq:W0} and Lemma \ref{SimpliCarre}, 
  one can  get for all $\sigma\in]-\sigma_1,d/2-1],$
$$ \cW^\sigma(t)  \leq e^{-ct}\cW^\sigma(0) +C\int_0^te^{-c(t-\tau)}\norme{h(\tau)}^\ell_{\dot{\mathbb{B}}^{\sigma}_{2,1}} d\tau.
$$
Hence, since $\cW^\sigma\approx \|W\|^\ell_{\dot{\mathbb{B}}^{\sigma}_{2,1}}$ and using 
the estimates of $h$ pointed out in the proof of Proposition \ref{PropW},  we get
$$\displaylines{
 \|W(t)\|^\ell_{\dot{\mathbb{B}}^{\sigma}_{2,1}}\lesssim e^{-t}   \|W_0\|^\ell_{\dot{\mathbb{B}}^{\sigma}_{2,1}}
 \hfill\cr\hfill+\int_0^te^{-(t-\tau)}\Bigl(\|Z\|_{\dot\B^{\frac d2}_{2,1}}\|(Z_2,W)\|_{\dot\B^\sigma_{2,1}}
 +\|(\nabla Z,W)\|_{\dot\B^{\frac d2}_{2,1}}\|Z\|_{\dot\B^{\sigma+1}_{2,1}} 
 +\|(\nabla^2Z,\nabla W)\|_{\dot\B^\sigma_{2,1}}^\ell \Bigr),}$$
 whence 
 $$\displaylines{
 \|W(t)\|^\ell_{\dot{\mathbb{B}}^{\sigma}_{2,1}}\lesssim e^{-t}   \|W_0\|^\ell_{\dot{\mathbb{B}}^{\sigma}_{2,1}}
 \hfill\cr\hfill+\int_0^te^{-(t-\tau)}\Bigl(\|(\nabla Z,Z_2)\|_{\dot\B^\sigma_{2,1}}\|Z\|_{\dot\B^{\frac d2}_{2,1}}
 +\|(\nabla Z,Z_2)\|_{\dot\B^{\frac d2}_{2,1}}\|Z\|_{\dot\B^{\sigma+1}_{2,1}} 
 +\|\nabla Z\|_{\dot\B^\sigma_{2,1}}^\ell \Bigr)\cdotp}$$
 In light of  the previous step,  the worst decay comes from the last term.
 In order to be allowed to use the corresponding estimate however, we need $\sigma +1 \leq d/2-1.$
 If that condition is satisfied then, setting $\beta=(\sigma+\sigma_1+1)/2,$   the above inequality implies that 
 $$ \langle t\rangle^\beta  \|W(t)\|^\ell_{\dot{\mathbb{B}}^{\sigma}_{2,1}}\lesssim   \langle t\rangle^\beta    e^{-ct}   
 \|W_0\|^\ell_{\dot{\mathbb{B}}^{\sigma}_{2,1}} + C_0\int_0^t\frac{\langle t\rangle^\beta }
 { \langle \tau\rangle^\beta }\, e^{-c(t-\tau)}\,d\tau\lesssim C_0.$$      
 In the case $\sigma+1> d/2-1,$ one can  use the fact that $\|W(t)\|^\ell_{\dot{\mathbb{B}}^{\sigma}_{2,1}}\lesssim 
  \|W(t)\|^\ell_{\dot{\mathbb{B}}^{\frac d2-2}_{2,1}},$ and  the above argument thus just implies that 
 $$ \norme{W(t)}^\ell_{\dot{\mathbb{B}}^{\sigma}_{2,1}}\lesssim (1+t)^{-\alpha_1}.$$
Keeping in mind \eqref{eq:Wsigma}, one can  conclude that $ \norme{Z_2}^\ell_{\dot{\mathbb{B}}^{\sigma}_{2,1}}$ satisfies the 
same decay estimates as $W.$

\subsubsection*{Last step : high frequencies decay}

Let us start from \eqref{eq:Lya3HF}.  The usual method based on  Lemma \ref{SimpliCarre} leads
after multiplying   by $\langle t \rangle^{2\alpha_1}$ (where $\alpha_1$ comes from \eqref{Decayalpha})
 yields
\begin{multline}
\norme{\langle t\rangle^{2\alpha_1}Z(t)}^h_{\dot{\mathbb{B}}^{\frac{d}{2}+1}_{2,1}}\leq e^{-ct}\norme{Z_0}^h_{\dot{\mathbb{B}}^{\frac{d}{2}+1}_{2,1}}+\int_0^t\langle t\rangle^{2\alpha_1}e^{-c(t-\tau)}\norme{Z}_{\dot{\mathbb{B}}^{\frac{d}{2}+1}_{2,1}} \biggl(\norme{Z}^h_{\dot{\mathbb{B}}^{\frac{d}{2}+1}_{2,1}}+\norme{Z}^\ell_{\dot{\mathbb{B}}^{\frac{d}{2}}_{2,1}}\biggr)d\tau \\+\int_0^t\langle t\rangle^{2\alpha_1}e^{-c(t-\tau)}\norme{Z}^\ell_{\dot{\mathbb{B}}^{\frac{d}{2}}_{2,1}}\norme{Z_2}^\ell_{\dot{\mathbb{B}}^{\frac{d}{2}}_{2,1}}\,d\tau. \label{DecayHfZ}
\end{multline}
Thanks to \eqref{Decayalpha}, the first quadratic  term may be bounded as follows:
$$\int_0^t\langle t\rangle^{2\alpha_1}e^{-c(t-\tau)}\norme{Z}_{\dot{\mathbb{B}}^{\frac{d}{2}+1}_{2,1}} \norme{Z}^h_{\dot{\mathbb{B}}^{\frac{d}{2}+1}_{2,1}}\leq\int_0^t\biggl(\frac{\langle t \rangle}{\langle\tau\rangle}\biggr)^{2\alpha_1} e^{-c(t-\tau)} \bigl(\langle\tau\rangle^{\alpha_1}\norme{Z}_{\dot{\mathbb{B}}^{\frac{d}{2}+1}_{2,1}}\bigr)^2d\tau
\lesssim C_0,$$
and the other  terms of the right-hand side of \eqref{DecayHfZ} may be bounded similarly.  
This completes the proof of Theorem \ref{ThmDecay}.

\section{The proof of Theorem \ref{Thmd2} and application to the
compressible Euler system}\label{s:app}

This section is devoted to the proof of Theorem \ref{Thmd2}, that is to say 
to a refinement of Theorem \ref{ThmGlobal} 
corresponding to the case where  System \eqref{GEQSYM} satisfies the extra conditions listed in \eqref{StructAssum}.
As an application, we shall obtain a global existence statement for  the compressible Euler with damping,
in a new functional framework, and will specify the dependency of the estimates 
with respect to the relaxation (or damping) parameter. 

\subsection{Proof of Theorem \ref{Thmd2}}

Proving existence and uniqueness being very similar to what we did before, 
we focus on establishing a priori estimates for a smooth solution $Z$
of \eqref{eq:Z} on $[0,T]\times\R^d,$ satisfying the smallness condition \eqref{eq:smallZ}. 
The general strategy is the same as in the previous section, and we shall mainly underline the places where having the structure
\eqref{StructAssum} comes into play. 

The first difference is in the following refinement of 
Lemma \ref{dtZ}
\begin{Lemme} \label{dtZbis}
Under hypotheses \eqref{StructAssum} and \eqref{eq:smallZ}, we have   for all  $\sigma\in]-d/2, d/2]$,
$$\begin{aligned}
\norme{\d_tZ_1}_{\dot{\mathbb{B}}^{\sigma}_{2,1}}&\lesssim 
\norme{\nabla Z_2}_{\dot{\mathbb{B}}^{\sigma}_{2,1}}+ \|Z_2\|_{\dot\B^{\frac d2}_{2,1}}
\|\nabla Z_1\|_{\dot\B^{\sigma}_{2,1}},\\
\norme{\d_tZ_2}_{\dot{\mathbb{B}}^{\sigma}_{2,1}}&\lesssim 
\norme{W}_{\dot{\mathbb{B}}^{\sigma}_{2,1}}.\end{aligned}$$
\end{Lemme}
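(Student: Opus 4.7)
The plan is to derive the two bounds separately, as they have quite different flavors. The second estimate, for $\d_tZ_2$, is essentially tautological: inverting the defining relation \eqref{def:WW} gives
$$\d_tZ_2 = -\bigl(\wt A^0_{2,2}(V)\bigr)^{-1}\mathds{L} W.$$
Writing $(\wt A^0_{2,2}(V))^{-1}=(\wt A^0_{2,2}(\bar V))^{-1}+\bigl[(\wt A^0_{2,2}(V))^{-1}-(\wt A^0_{2,2}(\bar V))^{-1}\bigr]$ and combining Proposition \ref{LP} with the composition estimate (Proposition \ref{Composition}), the first piece contributes a constant matrix times $\mathds{L} W$, while the perturbative piece gives a factor $\|Z\|_{\dot\B^{d/2}_{2,1}}\|W\|_{\dot\B^{\sigma}_{2,1}}$, absorbed thanks to the smallness assumption \eqref{eq:smallZ}.

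For $\d_tZ_1$, I would start from the first line of \eqref{GE}, which may be solved as
$$\d_tZ_1 = -\bigl(\wt A^0_{1,1}(V)\bigr)^{-1}\sum_{j=1}^d\bigl(\wt A^j_{1,1}(V)\,\d_jZ_1+\wt A^j_{1,2}(V)\,\d_jZ_2\bigr)\cdotp$$
The factor $(\wt A^0_{1,1}(V))^{-1}$ is a bounded Lipschitz function of $Z$, so by the usual composition/product arguments it acts as a harmless multiplier on $\dot\B^{\sigma}_{2,1}$ for $\sigma\in]-d/2,d/2]$. The term involving $\wt A^j_{1,2}(V)\d_jZ_2$ is estimated as in the proof of Lemma \ref{dtZ}, yielding the contribution $\|\nabla Z_2\|_{\dot\B^{\sigma}_{2,1}}$.

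The crux is the first contribution, $\wt A^j_{1,1}(V)\,\d_jZ_1$, and this is where \eqref{StructAssum} intervenes. Since $A^j_{1,1}(\bar V)=0$ and $D_{V_1}A^j_{1,1}(\bar V)=0$ (and the analogous properties are inherited by $\wt A^j=SA^j$ thanks to the block structure of $S$ at $\bar V$), a Taylor expansion in $Z=(Z_1,Z_2)$ around $\bar V$ gives
$$\wt A^j_{1,1}(V) = \Phi_j(V)\,Z_2 + \Psi_j(V)(Z_1,Z),$$
where $\Phi_j$, $\Psi_j$ are smooth matrix-valued functions and the second term is a bilinear combination of components of $Z_1$ with components of $Z$. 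Applying Proposition \ref{Composition} and the usual product rules yields
$$\|\wt A^j_{1,1}(V)\|_{\dot\B^{d/2}_{2,1}}\lesssim \|Z_2\|_{\dot\B^{d/2}_{2,1}}+\|Z_1\|_{\dot\B^{d/2}_{2,1}}\|Z\|_{\dot\B^{d/2}_{2,1}},$$
which under \eqref{eq:smallZ} collapses to $\lesssim \|Z_2\|_{\dot\B^{d/2}_{2,1}}$. Combining with the standard tame product estimate yields $\|\wt A^j_{1,1}(V)\d_jZ_1\|_{\dot\B^{\sigma}_{2,1}}\lesssim \|Z_2\|_{\dot\B^{d/2}_{2,1}}\|\nabla Z_1\|_{\dot\B^{\sigma}_{2,1}}$, completing the proof.

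The main obstacle is the rigorous derivation of the refined Taylor decomposition for $\wt A^j_{1,1}(V)$: one has to carefully track how the vanishing of $A^j_{1,1}(\bar V)$ and of its $V_1$-derivative propagate to the symmetrized matrix, and then separate the linear-in-$Z_2$ leading part from the genuinely quadratic cross terms without losing one derivative. Once this structural decomposition is in place, the rest is a direct application of the standard paraproduct machinery.
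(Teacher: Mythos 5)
Your treatment of $\d_tZ_2$ matches the paper's (both invert \eqref{def:WW} and the perturbative factor is absorbed by smallness). For $\d_tZ_1$ you start from the same identity \eqref{eq:dtZ1}, but the step that is meant to give
$\norme{\wt A^j_{1,1}(V)\d_jZ_1}_{\dot{\mathbb{B}}^{\sigma}_{2,1}}\lesssim\norme{Z_2}_{\dot{\mathbb{B}}^{d/2}_{2,1}}\norme{\nabla Z_1}_{\dot{\mathbb{B}}^{\sigma}_{2,1}}$
has a genuine gap.

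From the pointwise conditions $A^j_{1,1}(\bar V)=0$ and $D_{V_1}A^j_{1,1}(\bar V)=0$ alone, Taylor's formula gives
$\wt A^j_{1,1}(V)=\Phi_j(V)\,Z_2+\Psi_j(V)[Z,Z]$
with a remainder that is bilinear in the \emph{full} vector $Z=(Z_1,Z_2)$; in particular it can contain a purely $Z_1\otimes Z_1$ contribution, since nothing in \eqref{StructAssum} is assumed about $D^2_{V_1,V_1}A^j_{1,1}(\bar V)$. (Your written remainder $\Psi_j(V)(Z_1,Z)$ is not what Taylor gives, but it too retains a $Z_1\otimes Z_1$ piece.) All you can conclude is
$\norme{\wt A^j_{1,1}(V)}_{\dot{\mathbb{B}}^{d/2}_{2,1}}\lesssim\norme{Z_2}_{\dot{\mathbb{B}}^{d/2}_{2,1}}+\norme{Z}^2_{\dot{\mathbb{B}}^{d/2}_{2,1}}$.
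The sentence claiming this ``collapses to $\lesssim\norme{Z_2}_{\dot{\mathbb{B}}^{d/2}_{2,1}}$ under \eqref{eq:smallZ}'' is false: smallness of $\norme{Z}_{\dot{\mathbb{B}}^{d/2}_{2,1}}$ makes $\norme{Z}^2_{\dot{\mathbb{B}}^{d/2}_{2,1}}$ negligible compared with $\norme{Z}_{\dot{\mathbb{B}}^{d/2}_{2,1}}$, not compared with $\norme{Z_2}_{\dot{\mathbb{B}}^{d/2}_{2,1}}$ ($Z_1$ has no reason to be dominated by $Z_2$). The surviving cross term $\norme{Z}^2_{\dot{\mathbb{B}}^{d/2}_{2,1}}\norme{\nabla Z_1}_{\dot{\mathbb{B}}^{\sigma}_{2,1}}$ does not fit the right-hand side of the lemma.

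What the paper actually uses here (and again in the proof of Proposition~\ref{L2estimatePropBFd2}) is the stronger structural fact that $(\wt A^0_{1,1}(V))^{-1}\wt A^j_{1,1}(V)$ — and more generally $\wt A^j_{k,1}(V)-\wt A^j_{k,1}(\bar V)$ — factors as $h(Z_2)\,F(Z)$ with $h$ linear and $F$ smooth, i.e.\ the block vanishes \emph{identically} on $\{Z_2=0\}$, not merely to first order at $\bar V$. That is precisely what holds for Euler, where $A^j_{1,1}(V)=u_j\,\Id$, and it is what delivers $\norme{\wt A^j_{1,1}(V)}_{\dot{\mathbb{B}}^{d/2}_{2,1}}\lesssim\norme{Z_2}_{\dot{\mathbb{B}}^{d/2}_{2,1}}$ with no quadratic leftover. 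The fix is not finer Taylor bookkeeping at the single point $\bar V$; it is to invoke this full factorization through $Z_2$, as the paper's proof does.
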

\begin{proof}
The second inequality has been proved before (see Lemma \ref{dtZ}). 
The first one relies on  the decomposition 
\begin{equation}\label{eq:dtZ1}
\partial_tZ_1=- \sum_{j=1}^d
(\wt A^0_{1,1}(V))^{-1}\left(\wt A_{1,1}^j(V)\partial_{j}Z_1+\wt A_{1,2}^j(V)\partial_{j}Z_2\right)\cdotp\end{equation}
As the function $V\mapsto \wt A^0_{1,1}(V))^{-1}\wt A_{1,1}^j(V)$ vanishes at $\bar V$
and is linear with respect to $Z_2,$  Propositions \ref{LP}, \ref{Composition} and Condition \eqref{eq:smallZ}
guarantee the  desired  inequality.

\end{proof}

\subsubsection{Basic energy estimates}
As for Theorem \ref{ThmGlobal},  the first step consists in proving estimates 
  for $\|Z_q\|_{L^2_{\wt A_0(V)}}^2$ and $\|Z_q\|_{L^2_{\bar A_0}}^2.$ 
   \begin{Prop}\label{L2estimatePropBFd2}
Let $Z$ be a smooth solution \eqref{eq:Z} on $[0,T]$ satisfying \eqref{eq:smallZ}. 
 Then, under Condition \eqref{StructAssum}, we have
 for all  $q\geq 0$,
\begin{equation}
\frac12\frac{d}{dt}\norme{Z_q}^2_{L_{\wt A_0(V)}^2}+\kappa_0\norme{Z_{2,q}}^2_{L^2}\lesssim
c_q2^{-q(\frac d2+1)}\norme{(W,\nabla Z)}_{\dot{\mathbb{B}}^{\frac d2}_{2,1}}\norme{Z}_{\dot{\mathbb{B}}^{\frac d2+1}_{2,1}}\norme{Z_q}_{L^2},\label{eq:L2estimateHF}
\end{equation}
and  for all  $q\leq 0,$ 
\begin{multline}
\frac12\frac{d}{dt}\norme{Z_q}^2_{L_{\bar A_0}^2}+\kappa_0\norme{Z_{2,q}}^2_{L^2}\\\lesssim
c_q2^{-q\frac d2}\bigl(\norme{Z}_{\dot{\mathbb{B}}^{\frac{d}{2}}_{2,1}}\norme{(W,\nabla Z_2)}_{\dot{\mathbb{B}}^{\frac d2}_{2,1}}
+\norme{\nabla Z}_{\dot{\mathbb{B}}^{\frac d2}_{2,1}}\norme{Z_2}_{\dot{\mathbb{B}}^{\frac d2}_{2,1}}+ \norme{Z_2}^2_{\dot{\mathbb{B}}^{\frac d2}_{2,1}}\bigr)\norme{Z_q}_{L^2}.\label{eq:L2estimateBF}\end{multline}
\end{Prop}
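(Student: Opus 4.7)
The plan is to replay the proof of Proposition \ref{L2estimateProp}: apply $\ddq$ to \eqref{eq:Z}, take the $L^2$ scalar product with $Z_q$, use the symmetry of the $\wt A^j(V)$ together with \eqref{partdissip3} to extract the damping $\kappa_0\|Z_{2,q}\|_{L^2}^2$, and then estimate the $\d_t\wt A^0$ and $\d_j\wt A^j$ contributions, the commutators via \eqref{eq:com1}, and the nonlinear remainder $\ddq r(Z)$. The whole point of the refinement is to push the low-frequency index up to $d/2$ and to replace the $\|Z_2\|$-factors of Proposition \ref{L2estimateProp} by $\|W\|_{\dot\B^{d/2}_{2,1}}$ or $\|\nabla Z\|_{\dot\B^{d/2}_{2,1}}$; this is made possible by the sharper Lemma \ref{dtZbis} (itself a consequence of \eqref{StructAssum}) and by the quadraticity of $r$ in $Z_2$.

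For the high-frequency inequality \eqref{eq:L2estimateHF}, I would keep the weight $\wt A^0(V)$ and arrive at an identity analogous to \eqref{eq:Zq1}. The terms $\int\d_t\wt A^0(V)\,Z_q\cdot Z_q$ and $\int\d_j\wt A^j(V)\,Z_q\cdot Z_q$ are bounded by $\|(\d_tZ,\nabla Z)\|_{L^\infty}\|Z_q\|_{L^2}^2$; the embedding $\dot\B^{d/2}_{2,1}\hookrightarrow L^\infty$ and Lemma \ref{dtZbis} convert this into $\|(W,\nabla Z)\|_{\dot\B^{d/2}_{2,1}}\|Z_q\|_{L^2}^2$, and I then absorb one $\|Z_q\|_{L^2}$ using $\|Z_q\|_{L^2}\leq c_q2^{-q(d/2+1)}\|Z\|_{\dot\B^{d/2+1}_{2,1}}$. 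The commutators $R^1_q,R^2_q$ are controlled by \eqref{eq:com1} with $s=d/2+1$ combined with Lemma \ref{dtZbis} at $\sigma=d/2$, which directly yields the target shape. Finally, since $r$ is quadratic in $Z_2$, product laws give $\|r(Z)\|_{\dot\B^{d/2+1}_{2,1}}\lesssim\|Z_2\|_{\dot\B^{d/2}_{2,1}}\|Z_2\|_{\dot\B^{d/2+1}_{2,1}}$; I use \eqref{eq:Wsigma} and \eqref{eq:smallZ} to trade the low-regularity factor for $\|W\|_{\dot\B^{d/2}_{2,1}}+\|\nabla Z\|_{\dot\B^{d/2}_{2,1}}$, and estimate $\|Z_2\|_{\dot\B^{d/2+1}_{2,1}}\leq\|Z\|_{\dot\B^{d/2+1}_{2,1}}$.

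For the low-frequency inequality \eqref{eq:L2estimateBF}, I switch to the constant weight $\bar A^0$ and rewrite the localized equation in the style of \eqref{eq:Zq2} with the extra remainder $R^3_q\triangleq\ddq\bigl((\bar A^0-\wt A^0(V))\d_tZ\bigr)$; no $\d_t\wt A^0$ term then arises. The $\d_j\wt A^j(V)$ contribution gives $\|\nabla Z\|_{L^\infty}\|Z_q\|_{L^2}^2$; exploiting \eqref{StructAssum} to write $\wt A^j_{1,1}(V)=D_{V_2}\wt A^j_{1,1}(\bar V)Z_2+O(|Z|^2)$ (and analogously for $\wt A^j_{2,1}$), one factorizes a $Z_2$ and, together with smallness, rewrites this bound as $c_q2^{-qd/2}\|\nabla Z\|_{\dot\B^{d/2}_{2,1}}\|Z_2\|_{\dot\B^{d/2}_{2,1}}\|Z_q\|_{L^2}$. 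The commutator $R^1_q$ is handled identically. The remainder $R^3_q$ is bounded by $c_q2^{-qd/2}\|\wt A^0(V)-\bar A^0\|_{\dot\B^{d/2}_{2,1}}\|\d_tZ\|_{\dot\B^{d/2}_{2,1}}$, and Lemma \ref{dtZbis} converts this into the $\|Z\|_{\dot\B^{d/2}_{2,1}}\|(W,\nabla Z_2)\|_{\dot\B^{d/2}_{2,1}}$ contribution (the Lemma \ref{dtZbis} estimate for $\d_tZ_1$ produces the small-times-$\|\nabla Z_1\|$ piece that is absorbed thanks to \eqref{eq:smallZ}). Finally, $\ddq r(Z)$ produces the $\|Z_2\|_{\dot\B^{d/2}_{2,1}}^2$ term by quadraticity. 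The main obstacle is arranging the right-hand sides in the tight form of \eqref{eq:L2estimateHF}--\eqref{eq:L2estimateBF}: every intermediate $\|Z_2\|_{\dot\B^{d/2}_{2,1}}$ factor coming from the nonlinearities must be tradable, via \eqref{eq:Wsigma} and smallness, for $\|W\|_{\dot\B^{d/2}_{2,1}}+\|\nabla Z\|_{\dot\B^{d/2}_{2,1}}$, without leaving a stray $\|Z_2\|_{\dot\B^{d/2+1}_{2,1}}$ that could not be bounded by $\|Z\|_{\dot\B^{d/2+1}_{2,1}}$; this balancing is exactly what \eqref{StructAssum} and Lemma \ref{dtZbis} provide.
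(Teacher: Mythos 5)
Your overall plan is the right one and matches the paper's: replay the energy argument of Proposition \ref{L2estimateProp}, feed in Lemma \ref{dtZbis}, use the block structure \eqref{StructAssum} to put $Z_2$ factors on the right places, and use the quadraticity of $r$ in $Z_2$ via \eqref{eq:R'}--\eqref{eq:R''}. The high-frequency part as you sketch it does go through. However, in the low-frequency case there is a concrete gap in your treatment of the term $\int_{\R^d}\sum_j\partial_j(\wt A^j(V))\,Z_q\cdot Z_q$: you claim that after exploiting \eqref{StructAssum} you can ``factorize a $Z_2$'' and replace the naive bound $\|\nabla Z\|_{L^\infty}\|Z_q\|_{L^2}^2$ by $c_q2^{-qd/2}\|\nabla Z\|_{\dot\B^{d/2}_{2,1}}\|Z_2\|_{\dot\B^{d/2}_{2,1}}\|Z_q\|_{L^2}$, which is exactly what Inequality \eqref{eq:L2estimateBF} requires. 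But \eqref{StructAssum} gives control only on the blocks $A^j_{1,1}$, $A^j_{1,2}$, $A^j_{2,1}$; it imposes nothing on $A^j_{2,2}$, so $\partial_j(\wt A^j_{2,2}(V))$ is genuinely of size $\nabla Z$ with no $Z_2$ factor, and the corresponding piece $\int\partial_j(\wt A^j_{2,2}(V))\,Z_{2,q}\cdot Z_{2,q}$ cannot be recast in the target form.

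The missing idea is a block decomposition of the quadratic form followed by absorption into the damping. Namely, one has to split
\begin{equation*}
\int_{\R^d}\sum_{j}\partial_j(\wt A^j(V))\,Z_q\cdot Z_q
=\sum_j\int_{\R^d}\Bigl(\partial_j(\wt A^j_{1,1})Z_{1,q}\cdot Z_{1,q}
+\partial_j(\wt A^j_{2,1})Z_{2,q}\cdot Z_{1,q}
+\partial_j(\wt A^j_{1,2})Z_{1,q}\cdot Z_{2,q}
+\partial_j(\wt A^j_{2,2})Z_{2,q}\cdot Z_{2,q}\Bigr),
\end{equation*}
bound the first three by $\bigl(\|Z_2\|_{L^\infty}\|\nabla Z\|_{L^\infty}+\|\nabla Z_2\|_{L^\infty}\bigr)\|Z_{1,q}\|_{L^2}^2$ (those prefactors do carry $Z_2$ or $\nabla Z_2$ factors, thanks to \eqref{StructAssum}), and bound the last one by $\|\nabla Z\|_{L^\infty}\|Z_{2,q}\|_{L^2}^2$, which is then absorbed into $\kappa_0\|Z_{2,q}\|_{L^2}^2$ on the left-hand side using the smallness \eqref{eq:smallZbis}. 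This absorption step is essential and cannot be avoided, since \eqref{eq:L2estimateBF} contains no term of the form $\|\nabla Z\|_{\dot\B^{d/2}_{2,1}}\|Z\|_{\dot\B^{d/2}_{2,1}}$. Once it is inserted, the rest of your outline (commutator $R^1_q$ via \eqref{eq:com1} together with the factorization $\wt A^j_{k,1}(V)-\wt A^j_{k,1}(\bar V)=h(Z_2)F(Z)$, the $R^3_q$ term via Lemma \ref{dtZbis}, and $r(Z)$ via \eqref{eq:R'}) closes the argument exactly as in the paper.
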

\begin{proof}
The starting point is still \eqref{eq:Zq1} but we  now take advantage of 
Lemma \ref{dtZbis} and  refine the estimates for $R_q^1$ and \eqref{eq:nablaZ}.
More precisely, we  have 
$$
R_q^1=\sum_{j=1}^d\begin{pmatrix} [\wt A^j_{1,1}(V),\ddq]\d_jZ_1 +
 [\wt A^j_{1,2}(V),\ddq]\d_jZ_2\\[1ex]
  [\wt A^j_{2,1}(V),\ddq]\d_jZ_1+
   [\wt A^j_{2,2}(V),\ddq]\d_jZ_2\end{pmatrix}\cdotp
   $$
   Hence, using Inequality \eqref{eq:com1}, we get for all $\sigma\in]-d/2,d/2+1],$ 
   $$\displaylines{
   \|R_q^1\|_{L^2}\lesssim c_q2^{-q\sigma}\Bigl(
   \|\nabla(\wt A^j_{1,1}(V)),\nabla(\wt A^j_{2,1}(V))\|_{\dot\B^{\frac d2}_{2,1}}
   \|Z_1\|_{\dot\B^\sigma_{2,1}}\hfill\cr\hfill + \|\nabla(\wt A^j_{1,2}(V)),\nabla(\wt A^j_{2,2}(V))\|_{\dot\B^{\frac d2}_{2,1}}\|Z_2\|_{\dot\B^\sigma_{2,1}} \Bigr)\cdotp}$$
   At this point, one can use that \eqref{StructAssum} ensures that 
   for all $j\in\{1,\cdots,d\}$ and $k\in\{1,2\},$ there exist 
   a linear map $h$ and a smooth map $F$ such that  
   $\wt A^j_{k,1}(V)-\wt A^j_{k,1}(\bar V)= h(Z_2) F(Z).$ 
   Consequently, product laws and composition estimates give us 
   $$\begin{aligned}
   \|\nabla(\wt A^j_{k,1}(V))\|_{\dot\B^{\frac d2}_{2,1}} &\lesssim \|\nabla(h(Z_2)) \otimes F(Z)\|_{\dot\B^{\frac d2}_{2,1}} + \|h(Z_2) \otimes \nabla(F(Z))\|_{\dot\B^{\frac d2}_{2,1}}\\
   &\lesssim \|\nabla Z_2\|_{\dot\B^{\frac d2}_{2,1}} (1+ \|Z\|_{\dot\B^{\frac d2}_{2,1}})
   +\|Z_2\|_{\dot\B^{\frac d2}_{2,1}}\|\nabla Z\|_{\dot\B^{\frac d2}_{2,1}}\\
   &\lesssim   \|\nabla Z_2\|_{\dot\B^{\frac d2}_{2,1}}  +\|Z_2\|_{\dot\B^{\frac d2}_{2,1}}\|\nabla Z\|_{\dot\B^{\frac d2}_{2,1}},\end{aligned}
   $$
  whence  
   \begin{equation}\label{eq:Rq1}
    \|R_q^1\|_{L^2}\lesssim c_q2^{-q\sigma}
    \Bigl( \|\nabla Z_2\|_{\dot\B^{\frac d2}_{2,1}}  +\|Z_2\|_{\dot\B^{\frac d2}_{2,1}}\|\nabla Z\|_{\dot\B^{\frac d2}_{2,1}}\Bigr) \|Z_1\|_{\dot\B^\sigma_{2,1}}+ \|\nabla Z\|_{\dot\B^{\frac d2}_{2,1}}\|Z_2\|_{\dot\B^\sigma_{2,1}}.
    \end{equation}
    Let us also observe that Inequality \eqref{eq:R''} of Proposition \ref{ProprV} gives us
 $$ \|r(Z)\|_{\dot\B^{\frac d2+1}_{2,1}}\lesssim 
   \|Z_2\|_{\dot\B^{\frac d2+1}_{2,1}}\|Z_2\|_{\dot\B^{\frac d2}_{2,1}}
   +\|Z\|_{\dot\B^{\frac d2+1}_{2,1}} \|Z_2\|_{\dot\B^{\frac d2}_{2,1}}^2.$$
    Remembering that
    $$  \|R_q^2\|_{L^2}\lesssim c_q 2^{-q(\frac d2+1)} \|\nabla Z\|_{\dot\B^{\frac d2}_{2,1}}
    \|\d_tZ\|_{\dot\B^{\frac d2}_{2,1}},$$
    and  using Lemma \ref{dtZbis} as well as \eqref{eq:nablaZ} and \eqref{eq:dta0}, we eventually get \eqref{eq:L2estimateHF}.
\medbreak
For proving \eqref{eq:L2estimateBF}, the starting point is \eqref{eq:Zq2}.
The term corresponding to $R_q^1$ (resp. $r(Z)$) can be bounded according to \eqref{eq:Rq1} 
(resp. \eqref{eq:R''}) with $\sigma=d/2.$
In order to bound the term corresponding to $R_q^3,$ 
we observe that, in light of Lemma \ref{dtZbis},
$$\begin{aligned}
\|(\wt A^0_{1,1}(V)-\wt A^0_{1,1}(\bar V))\d_tZ_1\|_{\dot\B^{\frac d2}_{2,1}}
&\lesssim \|Z\|_{\dot\B^{\frac d2}_{2,1}} \|\d_tZ_1\|_{\dot\B^{\frac d2}_{2,1}}\\
&\lesssim  \|Z\|_{\dot\B^{\frac d2}_{2,1}}
\bigl(\norme{\nabla Z_2}_{\dot{\mathbb{B}}^{\frac d2}_{2,1}}+ \|Z_2\|_{\dot\B^{\frac d2}_{2,1}}
\|\nabla Z_1\|_{\dot\B^{\frac d2}_{2,1}}\bigr),\\
\|(\wt A^0_{2,2}(V)-\wt A^0_{2,2}(\bar V))\d_tZ_2\|_{\dot\B^{\frac d2}_{2,1}}
&\lesssim \|Z\|_{\dot\B^{\frac d2}_{2,1}} \|W\|_{\dot\B^{\frac d2}_{2,1}}.
\end{aligned}
$$
Finally, we have to  refine Inequality \eqref{eq:nablaZ}. To this end, we use the decomposition
$$\displaylines{\int_{\mathbb{R}^d}\sum_{j=1}^d \partial_{j}(\wt A^j(V))Z_q\cdotp Z_q=
\sum_{j=1}^d\int_{\R^d}\biggl(\d_j\bigl(\wt A^j_{1,1}(V)\bigr) Z_{1,q}\cdot Z_{1,q}+ 
\d_j\bigl(\wt A^j_{2,1}(V)\bigr) Z_{2,q}\cdot Z_{1,q}\hfill\cr\hfill+ 
\d_j\bigl(\wt A^j_{1,2}(V)\bigr) Z_{1,q}\cdot Z_{2,q}+ 
\d_j\bigl(\wt A^j_{2,2}(V)\bigr) Z_{2,q}\cdot Z_{2,q}\biggr)\cdotp}$$
The structure assumptions \eqref{StructAssum} and the symmetry of the system
ensure  that 
$$\|\d_j\bigl(\wt A^j_{1,1}(V)\bigr)\|_{L^\infty}+
\|\d_j\bigl(\wt A^j_{1,2}(V)\bigr)\|_{L^\infty}
+\|\d_j\bigl(\wt A^j_{2,1}(V)\bigr)\|_{L^\infty}\lesssim \|Z_2\|_{L^\infty}\|\nabla Z\|_{L^\infty}
+\|\nabla Z_2\|_{L^\infty}.$$
Hence, remembering \eqref{eq:smallZbis}, 
$$\int_{\mathbb{R}^d}\sum_{j=1}^d \partial_{j}(\wt A^j(V))Z_q\cdotp Z_q\lesssim
\bigl( \|Z_2\|_{L^\infty}\|\nabla Z\|_{L^\infty}
+\|\nabla Z_2\|_{L^\infty}\bigr)\|Z_{1,q}\|_{L^2}^2+\|\nabla Z\|_{L^\infty}\|Z_{2,q}\|_{L^2}^2.$$
 Plugging all the above inequalities in  \eqref{eq:Zq2}, we end up with 
 \eqref{eq:L2estimateBF}. 
   \end{proof}

\subsubsection{Cross estimates}

 Remember that for all $q\in\Z,$ we have
 \begin{equation}\label{eq:Iqbis}\frac{d}{dt}\mathcal{I}_q
+\frac{2^q}2 \sum_{k=1}^{n-1}\varepsilon_k \int_{\R^d} |NM_\omega^k \wh Z_q|^2\,d\xi
\leq \frac {2^{-q}\kappa_0}2 \|NZ_q\|_{L^2}^2 +  C\|\ddq G\|_{L^2}\|Z_q\|_{L^2}.\end{equation}
In our new regularity context, we have  to add up  $2^q\cI_q$ if $q<0$ (resp. 
$2^{-q}\cI_q$ if $q\geq0$ ) to $\cL_q,$ 
then to multiply by $2^{q\frac d2}$ (resp. 
$2^{q(\frac d2+1)}$).  This amounts to bounding 
$\|G\|_{\dot\B^{\frac d2+1}_{2,1}}^\ell$ and $\|G\|_{\dot\B^{\frac d2}_{2,1}}^h.$
To this end, we have to refine  the estimates  \eqref{eq:G1}, \eqref{eq:G2}
and \eqref{eq:G3} taking our structure assumption   \eqref{StructAssum}  into account. 

As a first, we see that   \eqref{eq:comparaison} and  Proposition \ref{ProprV} ensure that
\begin{equation}\label{G3}
\|G_3\|_{\dot\B^{\frac d2+1}_{2,1}}^\ell+ \|G_3\|_{\dot\B^{\frac d2}_{2,1}}^h\lesssim
\|G_3\|_{\dot\B^{\frac d2}_{2,1}}\lesssim \|Z_2\|_{\dot\B^{\frac d2}_{2,1}}^2.\end{equation}
Next, we have, thanks to Propositions \ref{LP} and \ref{ProprV}, 
\begin{equation}\label{G2}\|G_2\|_{\dot\B^{\frac d2+1}_{2,1}}^\ell+ \|G_2\|_{\dot\B^{\frac d2}_{2,1}}^h\lesssim
\|G_2\|_{\dot\B^{\frac d2+1}_{2,1}}\lesssim 
\|Z\|_{\dot\B^{\frac d2}_{2,1}}\|Z_2\|_{\dot\B^{\frac d2+1}_{2,1}}+ 
\|Z\|_{\dot\B^{\frac d2+1}_{2,1}}\|Z_2\|_{\dot\B^{\frac d2}_{2,1}}.\end{equation}
In order to improve the estimate for $G_1,$ we use that $\bar A_0^{-1}G_1$ 
is the sum for $j=1$ to $d$ of
$$
\begin{pmatrix}
\bigl((\wt A^0_{1,1}(V))^{-1}\wt A^j_{1,1}(V)-(\bar A^0_{1,1})^{-1}\bar A^j_{1,1}\bigr)\d_jZ_1
+\bigl((\wt A^0_{1,1}(V))^{-1}\wt A^j_{1,2}(V)-(\bar A^0_{1,1})^{-1}\bar A^j_{1,2}\bigr)\d_jZ_2\\
\bigl((\wt A^0_{2,2}(V))^{-1}\wt A^j_{2,1}(V)-(\bar A^0_{2,2})^{-1}\bar A^j_{2,1}\bigr)\d_jZ_1
+\bigl((\wt A^0_{2,2}(V))^{-1}\wt A^j_{2,2}(V)-(\bar A^0_{2,2})^{-1}\bar A^j_{2,2}\bigr)\d_jZ_2
\end{pmatrix}\cdotp
$$
Hence, owing to  \eqref{StructAssum},  we just have 
$$
\|G_1\|_{\dot\B^{\frac d2}_{2,1}} \lesssim \|Z_2\|_{\dot\B^{\frac d2}_{2,1}}\|\nabla Z_1\|_{\dot\B^{\frac d2}_{2,1}}+ \|Z\|_{\dot\B^{\frac d2}_{2,1}}\|\nabla Z_2\|_{\dot\B^{\frac d2}_{2,1}}.$$
Together with \eqref{G3} and \eqref{G2}, we can conclude that 
\begin{equation}\label{eq:Gq}
\|G\|_{\dot\B^{\frac d2+1}_{2,1}}^\ell+ \|G\|_{\dot\B^{\frac d2}_{2,1}}^h
\lesssim \|Z_2\|_{\dot\B^{\frac d2}_{2,1}}^2+ \|Z_2\|_{\dot\B^{\frac d2}_{2,1}}\|Z\|_{\dot\B^{\frac d2+1}_{2,1}}+ \|Z\|_{\dot\B^{\frac d2}_{2,1}}\|Z_2\|_{\dot\B^{\frac d2+1}_{2,1}}.\end{equation}

\subsubsection{Provisional assessment} 

Let $\cL'\triangleq \sum_{q<0} 2^{q\frac d2}\sqrt\cL_q+ \sum_{q\geq0} 2^{q(\frac d2+1)}\sqrt\cL_q.$
Putting together Inequalities \eqref{eq:L2estimateHF}, \eqref{eq:L2estimateBF}, \eqref{eq:Iqbis}
and \eqref{eq:Gq},  using Lemma \ref{SimpliCarre} and discarding the redundant terms, we end up with 
\begin{multline}\label{eq:cL'}
\cL'(t)+\kappa_0\int_0^t\bigl(\|Z\|_{\dot\B^{\frac d2+2}_{2,1}}^\ell+ \|Z\|_{\dot\B^{\frac d2+1}_{2,1}}^h\bigr)
\leq \cL'(0)\\ +C\int_0^t \bigl(\|(W,\nabla Z_2)\|_{\dot\B^{\frac d2}_{2,1}} +
\|Z_2\|_{\dot\B^{\frac d2}_{2,1}}\|Z\|_{\dot\B^{\frac d2+1}_{2,1}}\bigr)\cL'
+C\int_0^t\bigl(\|Z_2\|_{\dot\B^{\frac d2}_{2,1}}^2+ \|Z_2\|_{\dot\B^{\frac d2}_{2,1}}\|Z\|_{\dot\B^{\frac d2+1}_{2,1}}+\|Z\|_{\dot\B^{\frac d2+1}_{2,1}}^2\bigr)\cdotp\end{multline}

In order to close the estimates, we need to exhibit the $L^1$-in-time integrability of $W$ and  $\nabla Z_2$ in $\dot{\mathbb{B}}^{\frac{d}{2}}_{2,1}$ and the $L^2$-in-time integrability of $Z_2$ in $\dot{\mathbb{B}}^{\frac{d}{2}}_{2,1}$. 

\subsubsection{Bounds for the damped mode} 

With the notations we used to prove  \eqref{eq:W0},  remember that
\begin{equation}\label{eq:W0bis}\frac12\frac d{dt}\|W_q\|_{L^2_{\bar A^0_{2,2}}}^2 +\kappa_0\|W_q\|_{L^2}^2\leq  
\bigl(\|\ddq h_1\|_{L^2} + C\|\ddq h_2\|_{L^2}+ C\|\ddq h_3\|_{L^2}\bigr)\|W_q\|_{L^2}.
\end{equation}
{}From Lemma \ref{dtZbis}, Propositions \ref{LP}, \ref{Composition} and \ref{ProprV} and, since 
$$
\d_tQ(Z)=D_{Z_1}Q(Z)\d_tZ_1+ D_{Z_2}Q(Z)\d_tZ_2,$$  we readily get
$$\displaylines{\|h_1\|_{\dot\B^{\frac d2}_{2,1}}^\ell + \|h_1\|_{\dot\B^{\frac d2+1}_{2,1}}^\ell
\lesssim  \|h_1\|_{\dot\B^{\frac d2}_{2,1}}
\lesssim \|Z\|_{\dot\B^{\frac d2}_{2,1}} \|W\|_{\dot\B^{\frac d2}_{2,1}},\cr
\|h_3\|_{\dot\B^{\frac d2}_{2,1}}^\ell + \|h_3\|_{\dot\B^{\frac d2+1}_{2,1}}^\ell
\lesssim  \|h_3\|_{\dot\B^{\frac d2}_{2,1}}
\lesssim \|Z_2\|_{\dot\B^{\frac d2}_{2,1}}^2\|\nabla Z\|_{\dot\B^{\frac d2}_{2,1}}
+ \|Z_2\|_{\dot\B^{\frac d2}_{2,1}} \|W\|_{\dot\B^{\frac d2}_{2,1}}.}$$

 For bounding  $h_2,$  we need to refine the decomposition we did in the previous 
 section. More precisely, we now write that  for all $j\in\{1,\cdots,d\},$ 
 $$\displaylines{\d_t(A^j_{2,1}(V)\d_j Z_1 +A^j_{2,2}(V)\d_j Z_2) = D_{V_1}A^j_{2,1}(V)\d_t Z_1\d_jZ_1 
 +D_{V_2}A^j_{2,1}(V)\d_t Z_2\d_jZ_1\hfill\cr\hfill 
 +A^j_{2,1}(\bar V)\d_t\d_jZ_1  +  \bigl(A^j_{2,1}(V)-A^j_{2,1}(\bar V)\bigr)\d_t\d_jZ_1
\hfill\cr\hfill+ D_VA^j_{2,2}(V)\d_t Z\d_jZ_2 
 +A^j_{2,2}(\bar V)\d_t\d_jZ_2+ \bigl(A^j_{2,2}(V)-A^j_{2,2}(\bar V)\bigr)\d_t\d_jZ_2.}$$
Since   $A^j_{2,1}$ is linear with respect to $V_2,$ we  get
after using  \eqref{eq:dtZ1} and Lemma \ref{dtZbis} that  
$$\displaylines{
\|\ddq h_2\|_{L^2}\lesssim \|\nabla^2Z_{2,q}\|_{L^2} +\|\nabla W_q\|_{L^2}
+c_q2^{-q\frac d2}\Bigl(\|\nabla Z\|_{\dot\B^{\frac d2}_{2,1}}^2+ \|Z_2\|_{\dot\B^{\frac d2}_{2,1}}^2
\hfill\cr\hfill+\|W\|_{\dot\B^{\frac d2}_{2,1}}\|(Z,\nabla Z)\|_{\dot\B^{\frac d2}_{2,1}} +\|Z_2\|_{\dot\B^{\frac d2}_{2,1}}
\|\nabla Z\|_{\dot\B^{\frac d2}_{2,1}} +\|Z\|_{\dot\B^{\frac d2}_{2,1}}
\|\nabla Z_2\|_{\dot\B^{\frac d2}_{2,1}}\Bigr)\cdotp}$$
Hence, reverting to \eqref{eq:W0bis}, using Lemma \ref{SimpliCarre} and keeping 
the  notation  \eqref{eq:Ws}, we end up for $\sigma\in[d/2,d/2+1]$  with 
\begin{multline}\label{eq:Wd2b}
\cW^{\sigma}(t) +  \kappa_0\int_0^t 
\|W\|_{\dot{\mathbb{B}}^{\sigma}_{2,1}}^\ell\leq \cW^{\sigma}(0) 
+C\int_0^t\|(\nabla^2 Z_2,\nabla W)\|^\ell_{\dot\B^{\sigma}_{2,1}} 
\\+C\int_0^t \Bigl( \|Z_2\|_{\dot\B^{\frac d2}_{2,1}}^2+\|\nabla Z\|_{\dot\B^{\frac d2}_{2,1}}^2
+\|W\|_{\dot\B^{\frac d2}_{2,1}}\|(Z,\nabla Z)\|_{\dot\B^{\frac d2}_{2,1}} +\|Z_2\|_{\dot\B^{\frac d2}_{2,1}}
\|\nabla Z\|_{\dot\B^{\frac d2}_{2,1}} +\|Z\|_{\dot\B^{\frac d2}_{2,1}}
\|\nabla Z_2\|_{\dot\B^{\frac d2}_{2,1}}\Bigr)\cdotp
\end{multline}
In order to compare $W$ with $Z_2,$ one can use the decomposition:
$$\displaylines{W-Z_2=\mathds{L}^{-1}\sum_{j=1}^d\Bigl(\bar A^j_{2,1}\d_jZ_1+\bigl(\wt A^j_{2,1}(V)-\bar A^j_{2,1}\bigr)\d_jZ_1
+\bar A^j_{2,2}\d_jZ_2+\bigl(A^j_{2,2}(V)-\bar A^j_{2,2}\bigr)\d_jZ_2\Bigr)\hfill\cr\hfill-\mathds{L}^{-1}Q(Z),}$$
which implies that
\begin{equation}\label{eq:WZ2h}
\|W- Z_2\|_{\dot\B^{s}_{2,1}}^h
\lesssim \|\nabla  Z\|_{\dot\B^{\frac d2}_{2,1}}^h 
+ \|Z_2\|_{\dot\B^{\frac d2}_{2,1}}\|\nabla Z_1\|_{\dot\B^{\frac d2}_{2,1}}
+ \|Z\|_{\dot\B^{\frac d2}_{2,1}}\|\nabla Z_2\|_{\dot\B^{\frac d2}_{2,1}}+\|Z_2\|_{\dot\B^{\frac d2}_{2,1}}^2\end{equation}
and that,  for all $s\geq d/2,$ 
\begin{equation}\label{eq:WZ2l}
\|W- Z_2\|_{\dot\B^{s}_{2,1}}^\ell
\lesssim \|\nabla  Z\|_{\dot\B^{s}_{2,1}}^\ell 
+ \|Z_2\|_{\dot\B^{\frac d2}_{2,1}}\|\nabla Z_1\|_{\dot\B^{\frac d2}_{2,1}}
+ \|Z\|_{\dot\B^{\frac d2}_{2,1}}\|\nabla Z_2\|_{\dot\B^{\frac d2}_{2,1}}+\|Z_2\|_{\dot\B^{\frac d2}_{2,1}}^2.\end{equation}

\subsubsection{Closure of the estimates}

As in the previous section, if we set
$$\wt\cL'\triangleq \cL'+\e \cW^{\frac d2+1}+\e'\cW^{\frac d2}\andf 
\wt\cH'\triangleq  \|Z\|_{\dot\B^{\frac d2+2}_{2,1}}^\ell+ \|Z\|_{\dot\B^{\frac d2+1}_{2,1}}^h+\e\|W\|_{\dot{\mathbb{B}}^{\frac d2+1}_{2,1}}^\ell+\e'\|W\|_{\dot{\mathbb{B}}^{\frac d2}_{2,1}}^\ell$$
with suitable $\e$ and $\e',$ then putting together \eqref{eq:cL'} and \eqref{eq:Wd2b} yields
\begin{multline}\label{eq:wtcL'}
\wt\cL'(t) +\kappa_0\int_0^t\wt\cH' \leq\wt\cL'(0)+ C\int_0^t \bigl(\|Z_2\|_{\dot\B^{\frac d2+1}_{2,1}} +
\|W\|_{\dot\B^{\frac d2}_{2,1}} +
\|Z_2\|_{\dot\B^{\frac d2}_{2,1}}\|Z\|_{\dot\B^{\frac d2+1}_{2,1}}\bigr)\cL'
\\+C\int_0^t\bigl(\|Z_2\|_{\dot\B^{\frac d2}_{2,1}}^2+ \|Z_2\|_{\dot\B^{\frac d2}_{2,1}}\|Z\|_{\dot\B^{\frac d2+1}_{2,1}}+\|Z\|_{\dot\B^{\frac d2+1}_{2,1}}^2\bigr)\cdotp\end{multline}
Note that we have
$$
\bigl(\|Z_2\|_{\dot\B^{\frac d2}_{2,1}}^h\bigr)^2\lesssim
\bigl(\|Z_2\|_{\dot\B^{\frac d2+1}_{2,1}}^h\bigr)^2\lesssim \|Z\|_{\dot\B^{\frac d2+1}_{2,1}}^h\cL'
$$
and 
\begin{eqnarray}\label{eq:above}
\|Z\|_{\dot\B^{\frac d2+1}_{2,1}}^2+
\|Z_2\|_{\dot\B^{\frac d2}_{2,1}}^h\|Z\|_{\dot\B^{\frac d2+1}_{2,1}}&\lesssim&
\|Z\|_{\dot\B^{\frac d2+1}_{2,1}}^2\nonumber\\
&\lesssim& \|Z\|_{\dot\B^{\frac d2+2}_{2,1}}^\ell  \|Z\|_{\dot\B^{\frac d2}_{2,1}}^\ell
+\bigl(\|Z\|_{\dot\B^{\frac d2+1}_{2,1}}^h\bigr)^2\nonumber\\
&\lesssim& \bigl(\|Z\|_{\dot\B^{\frac d2+2}_{2,1}}^\ell+ \|Z\|_{\dot\B^{\frac d2+1}_{2,1}}^h\bigr)\cL'.
\end{eqnarray}
Hence, using also \eqref{eq:WZ2h}, and \eqref{eq:WZ2l}  with $s=d/2+1,$ we see that \eqref{eq:wtcL'} becomes just
$$
\wt\cL'(t) +\kappa_0\int_0^t\wt\cH' \\\leq\wt\cL'(0)+ C\int_0^t \wt\cH' \cL'
+\int_0^t\|Z_2\|_{\dot\B^{\frac d2}_{2,1}}^\ell\bigl(\|Z_2\|_{\dot\B^{\frac d2}_{2,1}}^\ell+\|Z\|_{\dot\B^{\frac d2+1}_{2,1}}\bigr)\cdotp$$
To handle the last integral, let us write that, by virtue of \eqref{eq:WZ2l} with $s=d/2,$ we have 
$$
(\|Z_2\|_{\dot\B^{\frac d2}_{2,1}}^\ell)^2\lesssim \bigl(\|W\|^\ell_{\dot\B^{\frac d2}_{2,1}}\bigr)^2+\|Z_2\|_{\dot\B^{\frac d2}_{2,1}}^4
+\bigl(\|Z\|^\ell_{\dot\B^{\frac d2+1}_{2,1}}\bigr)^2
+ \|Z_2\|_{\dot\B^{\frac d2}_{2,1}}^2\|\nabla Z_1\|_{\dot\B^{\frac d2}_{2,1}}^2
+ \|Z\|_{\dot\B^{\frac d2}_{2,1}}^2\|\nabla Z_2\|_{\dot\B^{\frac d2}_{2,1}}^2.
$$
The last three terms of the right-hand side may be bounded (owing to \eqref{eq:above} and to  \eqref{eq:smallZ})  by $\wt\cH'\cL',$ and we have
$$
\bigl(\|W\|^\ell_{\dot\B^{\frac d2}_{2,1}}\bigr)^2\lesssim \wt\cH'\wt\cL'$$
$$\|Z_2\|_{\dot\B^{\frac d2}_{2,1}}^4\lesssim \|Z\|_{\dot\B^{\frac d2}_{2,1}}^2\|Z_2\|_{\dot\B^{\frac d2}_{2,1}}^2\lesssim (\mathcal{L}')^2\|Z_2\|_{\dot\B^{\frac d2}_{2,1}}^2. $$ 
Finally, we have 
$$\|Z_2\|_{\dot\B^{\frac d2}_{2,1}}^\ell\|Z\|_{\dot\B^{\frac d2+1}_{2,1}}
\lesssim \|W\|_{\dot\B^{\frac d2}_{2,1}}^\ell\|Z\|_{\dot\B^{\frac d2+1}_{2,1}}+
\|Z\|_{\dot\B^{\frac d2+1}_{2,1}}^2 +\|Z_2\|_{\dot\B^{\frac d2}_{2,1}}^2\|Z\|_{\dot\B^{\frac d2+1}_{2,1}}.$$
 Hence there exists a constant $C$ (that may depend on $\e$ and $\e'$
 but not on the solution) such that for all $t\in[0,T],$ we have
 $$\wt\cL'(t) +\kappa_0\int_0^t\wt\cH' \\\leq\wt\cL'(0)+ C\int_0^t \wt\cH' \wt\cL'+ C\int_0^t (\cL'+ (\cL')^2)\|Z_2\|_{\dot\B^{\frac d2}_{2,1}}^2.$$
  Then, one can conclude exactly has in the previous section that  if
 $\wt\cL'(0)$ (or, equivalently, $\cZ'_0$) is small enough, then 
  $\wt\cL'$ is 
 a Lyapunov functional  such   that  for some  (new) positive real numbers $\kappa_0$ and $C,$
 \begin{equation}\label{eq:cL'afsd} \wt\cL'(t) +\kappa_0\int_0^t\wt\cH'\leq\wt\cL'(0)+C\wt\cL'(0)\int_0^t\|Z_2\|^2_{\dot\B^{\frac d2}_{2,1}}.\end{equation}
Furthermore, \eqref{eq:WZ2l} with $s=d/2$ ensures that
$$\displaylines{\quad\|Z_2\|_{L^2_T(\dot\B^{\frac d2}_{2,1})}^\ell \lesssim \|W\|_{L^2_T(\dot\B^{\frac d2}_{2,1})}^\ell 
+  \|\nabla Z\|_{L^2_T(\dot\B^{\frac d2}_{2,1})}^\ell +
\|Z_2\|_{L_T^\infty(\dot\B^{\frac d2}_{2,1})} \|\nabla Z\|_{L_T^2(\dot\B^{\frac d2}_{2,1})} 
\hfill\cr\hfill+\|Z\|_{L_T^\infty(\dot\B^{\frac d2}_{2,1})}\|\nabla Z_2\|_{L^2_T(\dot\B^{\frac d2}_{2,1})}+ \|Z_2\|_{L^\infty_T(\dot\B^{\frac d2}_{2,1})}\|Z_2\|^h_{L^2_T(\dot\B^{\frac d2}_{2,1})}+ \|Z_2\|_{L^\infty_T(\dot\B^{\frac d2}_{2,1})}\|Z_2\|^\ell_{L^2_T(\dot\B^{\frac d2}_{2,1})}.\quad}$$
Inequality \eqref{eq:cL'afsd} combined with \eqref{eq:smallZ} and an obvious interpolation inequality thus yields
\begin{equation}\label{eq:Z_2}
\|Z_2\|_{L^2_T(\dot\B^{\frac d2}_{2,1})}^\ell \lesssim \cZ'(0).\end{equation}
Similarly, using again \eqref{eq:WZ2l} but  with $s=d/2+1,$ we see that
$$\displaylines{\quad\|Z_2\|_{L^1_T(\dot\B^{\frac d2+1}_{2,1})}^\ell \lesssim \|W\|_{L^1_T(\dot\B^{\frac d2+1}_{2,1})}^\ell 
+  \|\nabla Z\|_{L^1_T(\dot\B^{\frac d2+1}_{2,1})}^\ell +
\|Z_2\|_{L_T^2(\dot\B^{\frac d2}_{2,1})} \|\nabla Z\|_{L_T^2(\dot\B^{\frac d2}_{2,1})} 
\hfill\cr\hfill+\|Z\|_{L_T^\infty(\dot\B^{\frac d2}_{2,1})}\|Z_2\|_{L^1_T(\dot\B^{\frac d2+1}_{2,1})}^h
+\|Z\|_{L_T^\infty(\dot\B^{\frac d2}_{2,1})}\|Z_2\|_{L^1_T(\dot\B^{\frac d2+1}_{2,1})}^\ell.\quad}$$
In light of \eqref{eq:smallZ}, the last term may be absorbed by the left-hand side 
and all the other terms may be bounded either through \eqref{eq:cL'afsd} or
through \eqref{eq:Z_2}.

{}From this point, the rest of the proof of this theorem essentially follows the lines 
of the previous section. \qed


\subsection{The isentropic compressible Euler System with damping}

We consider 
\begin{equation} \left\{ \begin{matrix}\partial_t\rho +\text{div}(\rho u)=0,\\[1ex] \partial_t(\rho u)+\text{div}(\rho u\otimes u)+\nabla P+\lambda\rho u=0, \end{matrix} \right.
\label{CED1}\end{equation} 
with $\lambda>0$ and where $P$ is a   (smooth) pressure law  satisfying\footnote{For simplicity we assume
that the reference density is $1$ so that the steady state is $\bar V=(1,0).$}
\begin{equation}\label{Pression1}P'(\rho)>0\ \text{ for }\ \rho\ \text{ close to }\  1 \andf P'(1)=1.\end{equation}

Considering the new unknown $\displaystyle n(\rho)=\int_1^\rho \frac{P'(s)}{s}\,ds$,
we can rewrite \eqref{CED1}   under the form
\begin{equation} \left\{ \begin{aligned} &\partial_tn+u\cdot\nabla n+\textrm{div}\,u+G(n)\textrm{div}\,u=0,\\ 
&\partial_tu+u\cdot\nabla u+\nabla n+\lambda u=0, \end{aligned} \right.\label{CED4}
\end{equation} where $G(n)$ is defined by the relation\footnote{Observe
that $\rho\mapsto n(\rho)$ is a smooth diffeomorphism  from a neighborhood
of $1$ to a neighborhood of $0$.}
$G(n(\rho))=P'(\rho)-1.$
\medbreak
In order to state our global existence for \eqref{CED4},  we need to introduce the following notations:
 $$\displaylines{z^{\ell,\lambda}\triangleq \sum_{2^q\leq \lambda}\ddq z,\qquad  z^{h,\lambda}\triangleq\sum_{2^q>\lambda}\ddq z,\cr 
 {\norme{z}^{\ell,\lambda}_{\dot{\mathbb{B}}^{s}_{2,1}}\triangleq \sum_{2^q\leq \lambda }2^{qs}\|\ddq z\|_{L^2} \andf
\norme{z}^{h,\lambda}_{\dot{\mathbb{B}}^{s}_{2,1}}\triangleq \sum_{2^q> \lambda}2^{qs}\|\ddq z\|_{L^2}}.}$$ 
\begin{Thm}\label{ThmEulerd2}  Let $(n_0,u_0)$ be in $\dot{\mathbb{B}}^{\frac{d}{2}}_{2,1}\cap\dot{\mathbb{B}}^{\frac{d}{2}+1}_{2,1}.$ Then, there  exist
two  positive constants $c$ and $C$ depending only on $G$ and on $d,$ such that if 
 \begin{eqnarray*} \norme{(n_0,u_0)}^{\ell,\lambda}_{\dot{\mathbb{B}}^{\frac{d}{2}}_{2,1}} +\lambda^{-1} \norme{(n_0,u_0)}^{h,\lambda}_{\dot{\mathbb{B}}^{\frac{d}{2}+1}_{2,1}} \leq c,
\end{eqnarray*} then System \eqref{CED4}  supplemented with initial data
$(n_0,u_0)$ 
admits a unique global-in-time solution $(n,u)$ in the space  defined by \begin{eqnarray*}
&& (n,u)\in \mathcal{C}_b(\mathbb{R}^+;\dot{\mathbb{B}}^{\frac{d}{2}}_{2,1}\cap\dot{\mathbb{B}}^{\frac{d}{2}+1}_{2,1}),\;\;\; 
(n^{h,\lambda},u^{h,\lambda})\in L^1(\mathbb{R}^+;\dot{\mathbb{B}}^{\frac{d}{2}+1}_{2,1}), \,\;\;\; n^{\ell,\lambda}\in L^1(\mathbb{R}^+,\dot{\mathbb{B}}^{\frac{d}{2}+2}_{2,1}), \\&& u^{\ell,\lambda}\in L^1(\mathbb{R}^+;\dot{\mathbb{B}}^{\frac{d}{2}+1}_{2,1}),  \;\;\; u\in L^2(\mathbb{R}^+;\dot{\mathbb{B}}^{\frac{d}{2}}_{2,1}) \andf \nabla n+\lambda u\in L^1(\mathbb{R}^+;\dot{\mathbb{B}}^{\frac{d}{2}}_{2,1}).
\end{eqnarray*}
Moreover we have the following a priori estimate:
\begin{equation}\label{eq:Zlambda}
\cZ_\lambda (t)\lesssim \norme{(n_0,u_0)}^{\ell,\lambda}_{\dot{\mathbb{B}}^{\frac{d}{2}}_{2,1}}+\lambda^{-1}\norme{(n_0,u_0)}^{h,\lambda}_{\dot{\mathbb{B}}^{\frac{d}{2}+1}_{2,1}} \quad\hbox{for all } t\geq 0\end{equation}
where
$$\displaylines{
\cZ_\lambda(T)\triangleq\norme{(n,u)}^{\ell,\lambda}_{L^\infty_T(\dot{\mathbb{B}}^{\frac{d}{2}}_{2,1})}
+\lambda^{-1}\norme{(n,u)}^{h,\lambda}_{L_T^\infty(\dot{\mathbb{B}}^{\frac{d}{2}+1}_{2,1})}\hfill\cr\hfill
+\lambda^{-1}\norme{n}^{\ell,\lambda}_{L^1_T(\dot{\mathbb{B}}^{\frac{d}{2}+2}_{2,1})}+\norme{(n,u)}^{h,\lambda}_{L^1_T(\dot{\mathbb{B}}^{\frac{d}{2}+1}_{2,1})}
+\norme{u}^{\ell,\lambda}_{L^1_T(\dot{\mathbb{B}}^{\frac{d}{2}+1}_{2,1})}+\lambda^{1/2}\norme{u}^{\ell,\lambda}_{L^2_T(\dot{\mathbb{B}}^{\frac{d}{2}}_{2,1})}+\norme{\nabla n+\lambda u}^{\ell,\lambda}_{L^1_T(\dot{\mathbb{B}}^{\frac{d}{2}}_{2,1})}.}$$
If furthermore, $(n_0,u_0)$ belongs to $\dot\B^{-\sigma_1}_{2,\infty}$ for some $\sigma_1\in]-d/2,d/2],$
then  the solution $(n,u)$ satisfies \eqref{eq:Zs1} and the decay  estimates mentioned at the end
of Theorem \ref{Thmd2} hold true. \end{Thm}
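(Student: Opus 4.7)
The plan is to reduce \eqref{CED4} to the framework of Theorem \ref{Thmd2} by the parabolic rescaling
$$
\tilde n(\tau,y)\triangleq n(\tau/\lambda,\,y/\lambda), \qquad \tilde u(\tau,y)\triangleq u(\tau/\lambda,\,y/\lambda),
$$
which, by a direct chain-rule computation, transforms \eqref{CED4} into the same system with damping coefficient equal to $1$. It is therefore enough to apply Theorem \ref{Thmd2} to $(\tilde n,\tilde u)$ and translate the resulting estimates back to $(n,u)$ using the scaling identities
$$
\norme{\tilde f(\tau)}^{\bullet}_{\dot\B^{s}_{2,1}} = \lambda^{d/2-s}\,\norme{f(\tau/\lambda)}^{\bullet,\lambda}_{\dot\B^{s}_{2,1}} \quad(\bullet\in\{\ell,h\}), \qquad \norme{\tilde g}_{L^p_\tau(\R^+)}=\lambda^{1/p}\norme{g}_{L^p_t(\R^+)},
$$
in which the unit-threshold cutoff between low and high frequencies in the $y$-variable corresponds exactly to the $\lambda$-threshold cutoff in the $x$-variable. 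Each power of $\lambda$ in $\cZ_\lambda$ is then obtained by summing the corresponding space and time exponents; for instance the factor $\lambda^{1/2}$ in front of $\norme{u}^{\ell,\lambda}_{L^2_T(\dot\B^{d/2}_{2,1})}$ reflects $\lambda^{0}\cdot\lambda^{1/2}$, and the factor $\lambda^{-1}$ in front of the high-frequency norms reflects $\lambda^{d/2-(d/2+1)}$.

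The main verification is that \eqref{CED4} at $\lambda=1$ fulfils the hypotheses of Theorem \ref{Thmd2}. Symmetrization is achieved with $S(V)\triangleq \mathrm{diag}\bigl(1/(1+G(n)),\,I_d\bigr)$, which is positive definite near $\bar V=(0,0)$ by \eqref{Pression1} and makes all $\wt A^j$ symmetric and $\wt A^0$ block-diagonal; the partial dissipation decomposition is immediate with $Z_1=n$, $Z_2=u$, $\mathcal M=\R\times\{0\}^d$ and $L=\mathrm{diag}(0,I_d)$, and \eqref{partdissip2} holds trivially since $L^2=L$. Moreover $\wt H(V)=(0,-u)^{\top}$ is linear, so the remainder $r$ vanishes identically, making both the condition $r(Z_1,0)=0$ and the quadratic-in-$Z_2$ requirement in \eqref{StructAssum} automatic. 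The Shizuta--Kawashima condition is checked at $\bar V$: one finds $N=\mathrm{diag}(0,I_d)$ and $M_\omega=\bigl(\begin{smallmatrix}0 & \omega^{\top}\\ \omega & 0\end{smallmatrix}\bigr)$, so $N\phi=0$ forces $\phi=(\phi_1,0)$, and then $M_\omega\phi=-\mu\phi$ yields $\phi_1\omega=0$, hence $\phi=0$. The remaining conditions of \eqref{StructAssum} follow from $\wt A^j_{1,1}(V)=u_j/(1+G(n))$ vanishing together with its $V_1$-gradient at $\bar V$, and from $\wt A^j_{2,1}(V)=e_j$ being constant.

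Theorem \ref{Thmd2} therefore provides a unique global solution $(\tilde n,\tilde u)$ with a bound on the analogue $\cZ'$ in the rescaled variables, and the scaling identities transform it into the estimate \eqref{eq:Zlambda} for $\cZ_\lambda$. The only nontrivial term is the control of $\nabla n+\lambda u$ in $L^1(\R^+;\dot\B^{d/2}_{2,1})$: by \eqref{def:WW} applied to the original system (with $\mathds{L}=\lambda I_d$), one has $\lambda W = \lambda u+\nabla n + \mathcal R(n,u)$, where $\mathcal R$ collects the quadratic contributions $u\cdot\nabla u - Q(n,u)$, each of which is bounded in $L^1_t(\dot\B^{d/2}_{2,1})$ by $\cZ_\lambda^2\ll \cZ_\lambda$ via the product laws already used in Section \ref{s:app}. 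The decay estimates are inherited identically, using additionally that $\norme{\tilde f_0}_{\dot\B^{-\sigma_1}_{2,\infty}}=\lambda^{d/2+\sigma_1}\norme{f_0}_{\dot\B^{-\sigma_1}_{2,\infty}}$. The main obstacle is nothing conceptual, only the careful bookkeeping of the $\lambda$-weights across the many terms of $\cZ_\lambda$; the structural verifications are short and algebraic.
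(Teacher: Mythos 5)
Your proof is correct and follows the same route as the paper: hyperbolic rescaling to reduce to $\lambda=1$, invoking Theorem \ref{Thmd2} after verifying the structural hypotheses with the same symmetrizer $\mathrm{diag}\bigl((1+G(n))^{-1},I_d\bigr)$, and recovering the $\nabla n+\lambda u$ estimate from the damped mode $W=u+\lambda^{-1}(\nabla n+u\cdot\nabla u)$ by the same product bound. You spell out the verification of (SK), \eqref{partdissip2} and \eqref{StructAssum} in more detail than the paper; the only slip is calling the change of variables $t\mapsto\lambda t$, $x\mapsto\lambda x$ ``parabolic'' when it is the isotropic (hyperbolic) scaling, which does not affect the argument.
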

\begin{proof}
Performing the rescaling 
$$(n,u)(t,x)\triangleq (\wt n, \wt u)(\lambda t,\lambda x)$$ 
reduces the proof to $\lambda=1$ (and  the inverse scaling
will eventually give the desired dependency with respect to $\lambda$ in the 
above statement). Then, the whole result is a corollary of  Theorem \ref{Thmd2}
provided System \eqref{CED4} satisfies the structural assumption \eqref{StructAssum} at $0.$ 
Indeed, one can take as a symmetrizer the matrix
$\begin{pmatrix} (1+G(n))^{-1}&0\\0& I_d\end{pmatrix}$ 
where the first diagonal block is of size $1\times 1$ and the second one, of size  $d\times d$. 
The blocks of type $A^j_{1,1}$ and $A^j_{2,1}$ depend only (and linearly) on $u,$ 
which is indeed the damped component.  
Finally, the damped mode (in the case $\lambda=1$) is $W=u+\nabla n+u\cdot\nabla u.$
Now,  by virtue of \eqref{eq:Zlambda}, 
$$
\|W-(u+\nabla n)\|_{L^1_T(\dot\B^{\frac d2}_{2,1})} \lesssim 
\|u\|_{L^\infty_T(\dot\B^{\frac d2}_{2,1})} \|\nabla u\|_{L^1_T(\dot\B^{\frac d2}_{2,1})}
\lesssim \bigl(\norme{(n_0,u_0)}^\ell_{\dot{\mathbb{B}}^{\frac{d}{2}}_{2,1}}+\norme{(n_0,u_0)}^h_{\dot{\mathbb{B}}^{\frac{d}{2}+1}_{2,1}}\bigr)^2,$$
 hence $u+\nabla n$ satisfies the same estimates as $W,$ which completes the proof. 
\end{proof}

\section{Appendix}

Here we gather a few technical results that have been used repeatedly in the paper. 
\medbreak
The first one is the justification that one may choose (arbitrarily small) positive parameters $\e_1,\dotsm,\e_{n-1}$
so that, whenever $\wh Z$ satisfies \eqref{eq:Zomega}, Inequality \eqref{eq:I} holds true. 
The proof just consists in bounding suitably the terms of the right-hand side of \eqref{eq:14}. 
\begin{itemize}
\item Terms $\cI^1_k\triangleq \bigl(NM^{k-1}_\omega N\wh Z\cdotp N M_\omega^k\wh Z\bigr)$
with $k\in\{1,\cdots, n-1\}.$ 

Since matrices $M_\omega$ are bounded on $\S^{d-1},$ we may write
$$
\e_k |\cI^1_k|\lesssim \e_k|N\wh Z| |N M_\omega^k\wh Z| \leq \frac{|N\wh Z|^2}{4n\rho}
+C\rho\e_k^2|NM_\omega^k \wh Z|^2.
$$
\item Terms   $\e_k\bigl(NM^{k-1}_\omega \wh Z\cdotp N M_\omega^k N\wh Z\bigr)$ with  $k\in\{2,\cdots, n-1\}$
may be bounded similarly. 
\smallbreak
\item  We have  $\e_1|\bigl(N\wh Z\cdotp N M_\omega N\wh Z\bigr)|
\leq C\e_1|N\wh Z|^2.$
\smallbreak
\item Terms $\cI^2_k\triangleq \rho\bigl(NM_\omega^{k-1}\wh Z\cdotp NM_\omega^{k+1}\wh Z\bigr)$ with $k\in\{1,\cdots, n-2\}.$
We have
$$\begin{aligned}
\e_k |\cI^2_k|&\leq  \e_{k-1}\rho|NM_\omega^{k-1}\wh Z|\, |NM_\omega^{k+1}\wh Z|\\
&\leq \frac14  \rho|N M_\omega^{k-1}\wh Z|^2 +C\rho\frac{\e_k^2}{\e_{k-1}}  |NM_\omega^{k+1}\wh Z|^2. 
\end{aligned}
$$
As we want the two terms to be absorbed by the left-hand side of \eqref{eq:14}, we take $\e_k$ so that 
\begin{equation}\label{eq:e1}
4\e_k^2\leq \e_{k-1}\e_{k+1}.\end{equation}
We keep in mind that $\e_0$ has been set to $(2\pi)^{-d}\kappa_0$ (but   can be taken smaller if needed). 
\smallbreak
\item  Term $\cI^2_{n-1}\triangleq \e_{n-1} \rho\bigl(NM_\omega^{n-2}\wh Z\cdotp NM_\omega^{n}\wh Z\bigr).$
We start with the observation that,  owing to Cayley-Hamilton theorem, there exist coefficients
$c^j_\omega$ (that are uniformly bounded on $\S^{d-1},$ such that
$$M_\omega^n =\sum_{j=0}^{n-1}  c^j_\omega M_\omega^j.$$
Consequently, one may write
$$\begin{aligned}|\cI^2_{n-1}|&\lesssim \e_{n-1}\rho\sum_{j=0}^{n-1} |NM_\omega^{n-2}\wh Z|
|N M_\omega^j\wh Z| \\
&\leq \frac{C\e_{n-1}^2\rho}{\e_j} |NM_\omega^j\wh Z|^2 + \frac{\e_j\rho}4|NM_\omega^j\wh Z|^2.
\end{aligned} $$
Therefore one needs to assume in addition that 
\begin{equation}\label{eq:e2}
4C\e_{n-1}^2\leq \e_{j}\e_{n-2},\qquad j=0,\cdots,n-1.\end{equation}
\end{itemize}
Clearly, one is done if it is possible to find $\e_1,\cdots,\e_{n-1}$ fulfilling 
\eqref{eq:e1} and \eqref{eq:e2}. One can take for instance $\e_k=\e^{m_k}$ with  $\e$ small enough and 
$m_1,\cdots,m_{n-1}$ satisfying for some $\delta>0$ (that can be taken arbitrarily small): 
$$m_k\geq \frac{m_{k-1}+m_{k+1}}2+\delta\andf
m_{n-1}\geq\frac{m_{k}+m_{n-2}}2+ \delta, \quad k=1,\cdots,n-2.$$

\bigbreak
We often used the following  well known result  (see  e.g. \cite{CBD1} for the proof). 
\begin{Lemme}\label{SimpliCarre}
Let $X : [0,T]\to \mathbb{R}^+$ be a continuous function such that $X^2$ is differentiable. Assume that there exists 
 a constant $B\geq 0$ and  a measurable function $A : [0,T]\to \mathbb{R}^+$ 
such that 
 $$\frac{1}{2}\frac{d}{dt}X^2+BX^2\leq AX\quad\hbox{a.e.  on }\ [0,T].$$ 
 Then, for all $t\in[0,T],$ we have
$$X(t)+B\int_0^tX\leq X_0+\int_0^tA.$$
\end{Lemme}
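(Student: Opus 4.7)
The plan is to reduce to the case where we may legitimately divide the differential inequality by $X$, since that would immediately give $X'+BX\leq A$ and the conclusion by straight integration. The obstruction is of course that $X$ may vanish somewhere on $[0,T]$, and we only know that $X^2$ (not $X$) is differentiable. The standard way around this is to regularize by $\varepsilon>0$ and then send $\varepsilon\to 0$.

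Concretely, for $\varepsilon>0$ I would set $X_\varepsilon\triangleq\sqrt{X^2+\varepsilon^2}$. Then $X_\varepsilon\geq\varepsilon>0$ and $X_\varepsilon^2=X^2+\varepsilon^2$ is differentiable almost everywhere on $[0,T]$ with $(X_\varepsilon^2)'=(X^2)'$, so $X_\varepsilon$ itself is differentiable a.e.\ with $X_\varepsilon'=(X^2)'/(2X_\varepsilon)$. Dividing the hypothesis by $X_\varepsilon>0$ and using $X/X_\varepsilon\leq 1$ gives, a.e.\ on $[0,T]$,
\begin{equation*}
X_\varepsilon'+B\frac{X^2}{X_\varepsilon}\leq A\frac{X}{X_\varepsilon}\leq A.
\end{equation*}

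Integrating on $[0,t]$ (which is legitimate since $X_\varepsilon$ is absolutely continuous as the composition of the smooth map $s\mapsto\sqrt{s+\varepsilon^2}$ with the differentiable map $X^2$), I obtain
\begin{equation*}
X_\varepsilon(t)+B\int_0^t\frac{X^2}{X_\varepsilon}\leq X_\varepsilon(0)+\int_0^t A.
\end{equation*}
Now I let $\varepsilon\to 0$. Pointwise, $X_\varepsilon\to X$ and $X^2/X_\varepsilon\to X$, and the bounds $X_\varepsilon\leq X+\varepsilon$ and $X^2/X_\varepsilon\leq X$ allow me to pass to the limit on the left-hand side by monotone (or dominated) convergence on the integral term, and by continuity on the pointwise terms $X_\varepsilon(t)\to X(t)$ and $X_\varepsilon(0)\to X_0$. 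The right-hand side converges trivially. This yields the announced estimate.

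The only real subtlety, which I do not expect to be a serious obstacle, is to justify the integration step under the mere assumption that $X^2$ is differentiable (not necessarily $C^1$): the point is that $X_\varepsilon$ inherits absolute continuity from $X^2$ through the $C^\infty$ map $s\mapsto\sqrt{s+\varepsilon^2}$, which is exactly what is needed to integrate the inequality $X_\varepsilon'\leq A-BX^2/X_\varepsilon$ from $0$ to $t$. Once this is in place, the limit $\varepsilon\to 0$ is automatic.
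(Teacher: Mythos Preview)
The paper does not actually prove this lemma; it simply states that the result is well known and refers the reader to \cite{CBD1}. So there is nothing to compare your argument against here.

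On its own merits, your proof is the standard one: regularize by $X_\varepsilon=\sqrt{X^2+\varepsilon^2}$, divide through, integrate, and let $\varepsilon\to0$. The computation is correct and the limit step is handled properly (monotone convergence on $X^2/X_\varepsilon\uparrow X$, pointwise continuity for the boundary terms). You are also right to flag the one genuine subtlety, namely that integrating $X_\varepsilon'\leq A-BX^2/X_\varepsilon$ from $0$ to $t$ requires some form of the fundamental theorem of calculus for $X_\varepsilon$, and mere differentiability of $X^2$ is not quite the same as absolute continuity. In the applications in this paper $X^2$ is always obtained from an energy identity and is in fact $W^{1,1}$, so the issue does not arise; but as a stand-alone statement your caveat is well placed. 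Beyond this, the argument is complete.
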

The following estimates  are proved in  e.g.  \cite[Chap. 2]{HJR}.  
\begin{Prop}\label{C1}    The following inequalities hold true: 
\begin{itemize}
\item  If $-d/2<s\leq {d}/{2}+1$, then
 \begin{equation}\label{eq:com1}
2^{qs}\norme{[w,\dot{\Delta}_q]\nabla v}_{L^2}\leq Cc_q\norme{\nabla w}_{\dot{\mathbb{B}}^{\frac{d}{2}}_{2,1}}\norme{v}_{\dot{\mathbb{B}}^{s}_{2,1}}  \with\sum_{q\in\mathbb{Z}}c_q=1.
\end{equation}
\item   If $-d/2\leq s<{d}/{2}+1$, then
\begin{equation}\label{eq:com3} 
\sup_{q\in\Z} 2^{qs}\|[w,\ddq]\nabla v\|_{L^2}\leq C\|\nabla w\|_{\dot\B^{\frac d2}_{2,1}} \|v\|_{\dot\B^{s}_{2,\infty}}.\end{equation}
  \end{itemize}
 \end{Prop}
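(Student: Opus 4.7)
The plan is to establish both commutator inequalities by the standard Bony paraproduct approach, as in Chapter 2 of \cite{HJR}. I will focus on \eqref{eq:com1}; the estimate \eqref{eq:com3} follows by the same argument after replacing $\ell^1(\mathbb{Z})$ summability by $\ell^\infty(\mathbb{Z})$, which is exactly why its admissible range of $s$ becomes $[-d/2, d/2+1)$ instead of $(-d/2, d/2+1]$.

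First, I would write $[w,\ddq]\nabla v = w\,\ddq\nabla v - \ddq(w\nabla v)$ and decompose the product $w\nabla v$ à la Bony:
$$w\nabla v = T_w\nabla v + T_{\nabla v}w + R(w,\nabla v),$$
where $T_a b = \sum_{q'}\dot S_{q'-1}a\,\dot\Delta_{q'}b$ and $R(a,b) = \sum_{q'}\dot\Delta_{q'}a\,\widetilde{\dot\Delta}_{q'}b$ with $\widetilde{\dot\Delta}_{q'} = \dot\Delta_{q'-1}+\dot\Delta_{q'}+\dot\Delta_{q'+1}$. Using the Fourier localization of $\ddq$, only the $T_w\nabla v$ piece produces a genuine commutator contribution, so that after regrouping
$$[w,\ddq]\nabla v = \sum_{|q'-q|\leq 4}[\dot S_{q'-1}w,\ddq]\dot\Delta_{q'}\nabla v + E_q^1 + E_q^2,$$
where $E_q^1$ collects the $\ddq$-image of $T_{\nabla v}w$ (and the missing off-diagonal $T_w\nabla v$ pieces) and $E_q^2$ collects the $\ddq$-image of $R(w,\nabla v)$.

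Second, I would handle the ``true'' commutator term by the mean value theorem. Writing $\ddq f = 2^{qd}h(2^q\cdot)\ast f$ with $h\in\mathcal{S}(\mathbb{R}^d)$, a direct computation gives
$$\bigl\|[\dot S_{q'-1}w,\ddq]\dot\Delta_{q'}\nabla v\bigr\|_{L^2}\lesssim 2^{-q}\|\nabla\dot S_{q'-1}w\|_{L^\infty}\|\dot\Delta_{q'}\nabla v\|_{L^2}\lesssim 2^{q'-q}\|\nabla w\|_{\dot{\mathbb{B}}^{d/2}_{2,1}}\|\dot\Delta_{q'}v\|_{L^2},$$
where I used the embedding $\dot{\mathbb{B}}^{d/2}_{2,1}\hookrightarrow L^\infty$ and the Bernstein inequality $\|\dot\Delta_{q'}\nabla v\|_{L^2}\sim 2^{q'}\|\dot\Delta_{q'}v\|_{L^2}$. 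Multiplying by $2^{qs}$ and summing over $|q'-q|\leq 4$ yields a bound of the form $c_q\|\nabla w\|_{\dot{\mathbb{B}}^{d/2}_{2,1}}\|v\|_{\dot{\mathbb{B}}^s_{2,1}}$ with $\sum_q c_q = 1$, valid for every $s\in\mathbb{R}$ (this piece does not constrain $s$).

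Third, for $E_q^1$ and $E_q^2$, I would invoke the classical paraproduct and remainder estimates from \cite{HJR}: on the one hand $\|T_{\nabla v}w\|_{\dot{\mathbb{B}}^s_{2,1}}\lesssim\|\nabla w\|_{\dot{\mathbb{B}}^{d/2}_{2,1}}\|v\|_{\dot{\mathbb{B}}^s_{2,1}}$ provided $s\leq d/2+1$, and on the other hand $\|R(w,\nabla v)\|_{\dot{\mathbb{B}}^s_{2,1}}\lesssim \|\nabla w\|_{\dot{\mathbb{B}}^{d/2}_{2,1}}\|v\|_{\dot{\mathbb{B}}^s_{2,1}}$ provided $s > -d/2$ (the lower bound being needed for the low-frequency convergence of the remainder). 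Combining the three bounds concludes the proof of \eqref{eq:com1}; for \eqref{eq:com3}, the same scheme is run with $\|\cdot\|_{\dot{\mathbb{B}}^s_{2,\infty}}$ on the right, which flips which endpoint of the $s$-range is admissible.

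The main technical point, and the only place where the range of $s$ really appears, is the bookkeeping at the endpoints in Step 3: one must verify that the shift $\nabla v \in \dot{\mathbb{B}}^{s-1}_{2,1}$ is compensated by the regularity $d/2$ of $\nabla w$, so that the relevant series in $q'$ converges in $\ell^1$ (for \eqref{eq:com1}) or $\ell^\infty$ (for \eqref{eq:com3}); this is what selects the two stated ranges. Everything else is routine localization and rearrangement.
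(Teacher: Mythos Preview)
Your proposal is correct and follows exactly the standard Bony paraproduct argument from \cite[Chap.~2]{HJR} (specifically Lemma~2.100 there), which is precisely what the paper cites in lieu of a proof. The paper itself does not give an independent proof of this proposition, so your sketch in fact supplies more detail than the paper does; the three-step scheme (true commutator via the mean-value bound, paraproduct term requiring $s\leq d/2+1$, remainder requiring $s>-d/2$) is the canonical route.
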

The following product laws in Besov spaces have been used several times. 
 \begin{Prop} \label{LP} Let $(s,r)$ be in $]0,\infty[\times[1,\infty].$ Then, 
 $\dot{\mathbb{B}}^{s}_{2,r}\cap L^\infty$ is an algebra and we have
\begin{equation}\label{eq:prod1}
\norme{ab}_{\dot{\mathbb{B}}^{s}_{2,r}}\leq C\bigl(\norme{a}_{L^\infty}\norme{b}_{\dot{\mathbb{B}}^{s}_{2,r}}+\norme{a}_{\dot{\mathbb{B}}^{s}_{2,r}}\norme{b}_{L^\infty}\bigr)\cdotp
\end{equation}
If, furthermore, $-d/2<s\leq d/2,$ then the following inequality holds:
\begin{equation}\label{eq:prod2}
\|ab\|_{\dot\B^{s}_{2,1}}\leq C\|a\|_{\dot\B^{\frac d2}_{2,1}}\|b\|_{\dot\B^{s}_{2,1}}.
\end{equation}
Finally,  if  $-d/2<\sigma_1\leq d/2$, then the following inequality holds true: 
\begin{equation}\label{eq:prod3} 
\norme{fg}_{\dot{\mathbb{B}}^{-\sigma_1}_{2,\infty}}\leq C  \norme{f}_{\dot{\mathbb{B}}^{\frac{d}{2}}_{2,1}}\norme{g}_{\dot{\mathbb{B}}^{-\sigma_1}_{2,\infty}}.
\end{equation}

\end{Prop}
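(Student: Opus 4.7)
The plan is to obtain all three inequalities via Bony's paraproduct decomposition, treating each of the three pieces separately and exploiting frequency localization. Recall the decomposition
\begin{equation*}
ab = T_a b + T_b a + R(a,b),
\end{equation*}
where $T_a b \triangleq \sum_{q\in\Z}\dot S_{q-1}a\,\ddq b$ and $R(a,b)\triangleq\sum_{q\in\Z}\sum_{|q-q'|\leq 1}\ddq a\,\dot\Delta_{q'} b$. Each paraproduct block $\dot S_{q-1}a\,\ddq b$ has Fourier support in an annulus of size $2^q$, while each remainder block $\ddq a\,\dot\Delta_{q'} b$ has Fourier support in a ball of size $2^{\max(q,q')}$. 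These two support properties are the only geometric inputs needed, together with standard Bernstein inequalities and the embedding $\dot\B^{d/2}_{2,1}\hookrightarrow L^\infty$.

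For the algebra estimate \eqref{eq:prod1}, I would first bound the paraproduct $T_a b$: since $\|\dot S_{q-1}a\|_{L^\infty}\lesssim\|a\|_{L^\infty}$ and $\ddj(\dot S_{q-1}a\,\ddq b)$ vanishes unless $|j-q|$ is uniformly bounded, a standard convolution-of-sequences argument (H\"older on $\ell^r$) gives $\|T_a b\|_{\dot\B^s_{2,r}}\lesssim\|a\|_{L^\infty}\|b\|_{\dot\B^s_{2,r}}$. The symmetric bound for $T_b a$ is identical. For the remainder $R(a,b)$, annular Bernstein gives $\|\ddj R(a,b)\|_{L^2}\lesssim 2^{jd/2}\sum_{q\geq j-N_0}\|\ddq a\|_{L^2}\|\tilde\Delta_q b\|_{L^\infty}$; here one uses $s>0$ to perform the resulting sum via Young's convolution inequality on $\ell^r$, yielding $\|R(a,b)\|_{\dot\B^s_{2,r}}\lesssim\|a\|_{\dot\B^s_{2,r}}\|b\|_{L^\infty}$ (after symmetrization in $a,b$).

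For \eqref{eq:prod2}, the same decomposition is used, now with $a\in\dot\B^{d/2}_{2,1}$. The key point is that $\dot\B^{d/2}_{2,1}\hookrightarrow L^\infty,$ so by the previous step $\|T_ab\|_{\dot\B^s_{2,1}}\lesssim \|a\|_{L^\infty}\|b\|_{\dot\B^s_{2,1}}\lesssim \|a\|_{\dot\B^{d/2}_{2,1}}\|b\|_{\dot\B^s_{2,1}}$. For $T_b a$, applying Bernstein to $\dot S_{q-1}b$ in $L^\infty$ requires caution when $s\leq 0$: one writes $\|\dot S_{q-1}b\|_{L^\infty}\leq\sum_{q'\leq q-2}\|\ddot\Delta_{q'}b\|_{L^\infty}\lesssim\sum_{q'\leq q-2}2^{q'd/2}\|\dot\Delta_{q'}b\|_{L^2}\lesssim 2^{q(d/2-s)}c_q\|b\|_{\dot\B^s_{2,1}}$ with $(c_q)\in\ell^1$, which closes the estimate exactly when $s\leq d/2$. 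For the remainder, one uses instead that $\|\ddq a\|_{L^\infty}\lesssim 2^{qd/2}\|\ddq a\|_{L^2}$ and converts the sum into a Young convolution, which converges provided $s>-d/2$. This is the step where the two-sided restriction on $s$ is sharp and will be the main technical obstacle.

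For \eqref{eq:prod3}, the target space is $\dot\B^{-\sigma_1}_{2,\infty}$, so one works block-by-block and takes a supremum at the end rather than an $\ell^1$ sum. The paraproduct $T_f g$ is handled as above using $\|f\|_{L^\infty}\lesssim\|f\|_{\dot\B^{d/2}_{2,1}}$, giving $\|T_f g\|_{\dot\B^{-\sigma_1}_{2,\infty}}\lesssim \|f\|_{\dot\B^{d/2}_{2,1}}\|g\|_{\dot\B^{-\sigma_1}_{2,\infty}}$. For $T_g f$, one uses the same Bernstein bound $\|\dot S_{q-1}g\|_{L^\infty}\lesssim 2^{q(d/2+\sigma_1)}\|g\|_{\dot\B^{-\sigma_1}_{2,\infty}}$, valid whenever $\sigma_1\leq d/2$, paired with $\|\ddq f\|_{L^2}\lesssim 2^{-qd/2}c_q\|f\|_{\dot\B^{d/2}_{2,1}}$; the $\ell^\infty$ structure of the target absorbs the extra $c_q$ sequence. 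For the remainder, the restriction $\sigma_1>-d/2$ (that is, $-\sigma_1<d/2$) ensures the geometric series $\sum_{q\geq j}2^{-q\delta}$ in the Young-type estimate converges. As with \eqref{eq:prod2}, the delicate point is to track exactly where each endpoint restriction on $\sigma_1$ is consumed, but otherwise the computation is routine.
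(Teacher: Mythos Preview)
Your proposal is correct and follows exactly the standard Bony paraproduct route used in the reference \cite{HJR} that the paper cites in lieu of a proof; the paper itself does not supply an argument for this proposition. Minor quibbles aside (the factor $2^{jd/2}$ in your remainder bound for \eqref{eq:prod1} is unnecessary since H\"older already gives $\|\ddj R(a,b)\|_{L^2}\lesssim\sum_{q\geq j-N_0}\|\ddq a\|_{L^2}\|\tilde\Delta_q b\|_{L^\infty}$, and in the $T_b a$ step for \eqref{eq:prod2} the $c_q$ sequence comes from $\ddq a$, not from the $\dot S_{q-1}b$ bound), the decomposition, the frequency-support observations, and the tracking of the endpoint restrictions $-d/2<s\leq d/2$ are all handled as in the standard treatment.
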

The next proposition can be found in \cite{HJR}.
\begin{Prop}\label{Composition}
Let $f$ be a function in $\mathcal{C}^\infty(\mathbb{R})$ such that $f(0)=0$. let $(s_1,s_2)\in]0,\infty[^2$ and \\$(r_1,r_2)\in[1,\infty]^2$. We assume that $s_1<{d}/{2}$ or that $s_1={d}/{2}$ and $r_1=1$.

Then, for every real-valued function $u$ in $\dot{\mathbb{B}}^{s_1}_{2,r_1}\cap\dot{\mathbb{B}}^{s_2}_{2,r_2}\cap L^\infty$, the function $f\circ u$ belongs to $\dot{\mathbb{B}}^{s_1}_{2,r_1}\cap\dot{\mathbb{B}}^{s_2}_{2,r_2}\cap L^\infty$ and we have
$$\norme{f\circ u}_{\dot{\mathbb{B}}^{s_k}_{2,r_k}}\leq C\left(f',\norme{u}_{L^\infty}\right)\norme{u}_{\dot{\mathbb{B}}^{s_k}_{2,r_k}}\quad\hbox{for}\  k\in\{1,2\}.$$
\end{Prop}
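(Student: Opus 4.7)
The strategy is the classical Meyer chain rule combined with Bony's paraproduct decomposition, in the style of \cite[Chap. 2]{HJR}. I would begin with the telescoping identity
\begin{equation*}
f\circ u \;=\; \sum_{q\in\Z}\bigl(f(\dot S_{q+1}u)-f(\dot S_q u)\bigr),
\end{equation*}
which is meaningful thanks to $f(0)=0$ and the $\mathcal{S}'_h$ condition inherent to homogeneous Besov spaces (giving $\dot S_q u\to 0$ in $L^\infty$ as $q\to-\infty$, so the tail vanishes). By the fundamental theorem of calculus, each summand rewrites as $u_q\, m_q$ with $u_q\triangleq\dot\Delta_q u$ and $m_q\triangleq\int_0^1 f'(\dot S_q u+\tau u_q)\,d\tau$. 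Since the argument of $f'$ lies uniformly in $[-\norme{u}_{L^\infty},\norme{u}_{L^\infty}]$, we have the key uniform bound $\norme{m_q}_{L^\infty}\leq C(f',\norme{u}_{L^\infty})$ independently of $q$.

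Next, to bound $\norme{f\circ u}_{\dot\B^{s_k}_{2,r_k}}$ I would apply $\dot\Delta_j$ to the series $\sum_q u_q m_q$ and decompose each product using Bony's splitting into the paraproducts $T_{m_q}u_q$, $T_{u_q}m_q$, and the remainder $R(u_q,m_q)$. The main contribution comes from $T_{m_q}u_q$, which inherits the frequency localization of $u_q$: only indices $q$ with $|q-j|\leq N_0$ produce a nonzero $\dot\Delta_j(T_{m_q}u_q)$, and its $L^2$ norm is controlled by $\norme{m_q}_{L^\infty}\norme{u_q}_{L^2}$. Multiplying by $2^{js_k}$ and taking the $\ell^{r_k}$ norm recovers $C(f',\norme{u}_{L^\infty})\norme{u}_{\dot\B^{s_k}_{2,r_k}}$ after a discrete Young-type convolution against the summable sequence $(c_q)$. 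The remaining pieces $T_{u_q}m_q$ and $R(u_q,m_q)$ are handled by exploiting that $m_q$ is essentially spectrally concentrated at frequencies $\lesssim 2^q$, so that $\dot\Delta_j m_q$ decays in $L^\infty$ for $j\gg q$ by Bernstein inequalities applied to $\nabla m_q$ (whose $L^\infty$ norm is bounded in terms of $\norme{f''}_{L^\infty}$ times $2^q$ and powers of $\norme{u}_{L^\infty}$).

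The main obstacle is the absolute convergence of the telescoping sum in the $\mathcal{S}'_h$-sense, which is precisely where the dichotomy "$s_1<d/2$ or ($s_1=d/2$ and $r_1=1$)" is crucial: this hypothesis gives quantitative control on $\norme{\dot S_q u}_{L^\infty}$ as $q\to-\infty$, ensuring that $f\circ u$ itself belongs to $\mathcal{S}'_h$ and that its homogeneous Besov seminorm is well-defined. Once the estimate is secured at $(s_1,r_1)$, the same Meyer decomposition applies verbatim with the weight $2^{js_2}$ and $\ell^{r_2}$ summation—no further hypothesis on $s_2$ is needed because the uniform $L^\infty$ bound on $m_q$, combined with the fixed baseline regularity provided by $s_1$, suffices to propagate the estimate to the second pair $(s_2,r_2)$ with the same constant $C(f',\norme{u}_{L^\infty})$, yielding both inequalities claimed in the proposition.
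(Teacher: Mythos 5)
The paper does not prove this proposition: it simply cites it from \cite{HJR} (Bahouri--Chemin--Danchin). Your sketch is essentially the standard argument found there, namely Meyer's first-linearization identity $f\circ u=\sum_q u_q m_q$ with $m_q=\int_0^1 f'(\dot S_q u+\tau u_q)\,d\tau$, the uniform bound $\|m_q\|_{L^\infty}\le C(f',\|u\|_{L^\infty})$, and a frequency-by-frequency estimate of $\dot\Delta_j\sum_q u_q m_q$; organizing the last step via Bony's splitting of each $u_q m_q$ is a cosmetic variant of the textbook treatment, which instead estimates $\dot\Delta_j(u_q m_q)$ directly using derivative bounds on $m_q$.

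There is one technical gap worth closing. In bounding $T_{u_q}m_q$ you invoke Bernstein only on $\nabla m_q$, giving $\|\dot\Delta_{j'}m_q\|_{L^\infty}\lesssim 2^{-(j'-q)}$; the resulting convolution $\sum_{q\le j}2^{(q-j)(1-s_k)}\,2^{qs_k}\|u_q\|_{L^2}$ is summable only when $s_k<1$. Since the statement allows arbitrary $s_2>0$ (and indeed the paper uses it with $s_2=d/2+1$), you need the whole hierarchy $\|\nabla^N m_q\|_{L^\infty}\le C_N(f,\|u\|_{L^\infty})\,2^{qN}$ for every $N$, obtained by iterating the chain rule on $m_q$ and bounding each $\nabla^k\dot S_q u$, $\nabla^k u_q$ by $2^{qk}\|u\|_{L^\infty}$ via Bernstein; choosing $N>s_k$ then yields the needed geometric decay in $j'-q$. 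Also, a minor imprecision: the argument of $f'$ inside $m_q$ lies in $[-C\|u\|_{L^\infty},C\|u\|_{L^\infty}]$ with $C=\|\chi\|_{\mathcal{F}L^1}$-type constants, not exactly $[-\|u\|_{L^\infty},\|u\|_{L^\infty}]$; this changes nothing but is worth saying. Your reading of the hypothesis on $(s_1,r_1)$ is acceptable: it ensures the low-frequency part of the Meyer series converges absolutely in $L^\infty$, so that $f\circ u\in\mathcal{S}'_h$ and the finite seminorm indeed certifies membership in $\dot\B^{s_1}_{2,r_1}\cap\dot\B^{s_2}_{2,r_2}$ (for $s_2$ no analogous condition is needed because $s_1\le s_2$ already provides the low-frequency control).
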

As a consequence (see \cite[Cor. 2.66]{HJR}), if $g$ is a $\cC^\infty(\R)$ function such that $g'(0)=0.$  Then,  
for all $u,v$ in $\dot B^s_{2,1}\cap L^\infty$ with $s>0,$ we have
\begin{equation}\label{eq:compo}
\|g(v)-g(u)\|_{\dot B^s_{2,1}} \leq C\Bigl(\|v-u\|_{L^\infty}\|(u,v)\|_{\dot\B^s_{2,1}} + 
\|v-u\|_{\dot\B^s_{2,1}} \|(u,v)\|_{L^\infty}\Bigr)\cdotp
\end{equation}

We used the following result to estimate the remainder  of the dissipative term. 

\begin{Prop} \label{ProprV}
Let $\bar{V}\in\mathcal{M}$ and $Z\triangleq V-\bar{V}.$ 
Define  $r(Z)\triangleq \wt H(\bar V+Z)+LZ,$  $L\triangleq - D_V\wt H(\bar V)$   and $Z_2\triangleq(I_d-\mathcal{P})Z,$ and assume that $r(Z_1,0)=0$ for $Z_1$ in a neighborhood of $0.$  Then, provided 
 $\|Z\|_{\dot\B^{\frac d2}_{2,1}}$ is sufficiently small, the following inequalities hold true:
 \begin{equation}\label{eq:R}
  \|r(Z)\|_{\dot{\mathbb{B}}^\sigma_{2,1}}\lesssim \|Z\|_{\dot{\mathbb{B}}^{\frac d2}_{2,1}}\|Z_2\|_{\dot{\mathbb{B}}^\sigma_{2,1}}
 \quad\hbox{for }\ \sigma\in]-d/2,d/2]\end{equation}
 and, for $\sigma > d/2,$  
 \begin{equation}\label{eq:RR}\norme{r(Z)}_{\dot{\mathbb{B}}^{\sigma}_{2,1}}\lesssim \norme{Z_2}_{\dot{\mathbb{B}}^{\frac{d}{2}}_{2,1}}\norme{Z}_{\dot{\mathbb{B}}^{\sigma}_{2,1}}+\norme{Z_2}_{\dot{\mathbb{B}}^{\sigma}_{2,1}}\norme{Z}_{\dot{\mathbb{B}}^{\frac{d}{2}}_{2,1}}.\end{equation}
 Furthermore, if both $Z^1$ and $Z^2$ are sufficiently small in $\dot\B^{\frac d2}_{2,1}$
 then we have the following estimate for $\wt Z:=Z^1-Z^2$:
 \begin{equation}\label{eq:dr}
  \|r(Z^1)-r(Z^2)\|_{\dot\B^\sigma_{2,1}} \lesssim \|Z^1\|_{\dot\B^{\frac d2}_{2,1}}
 \|\wt Z_2\|_{\dot\B^{\sigma}_{2,1}} + \|\wt Z\|_{\dot\B^{\sigma}_{2,1}}\|Z^2_2\|_{\dot\B^{\frac d2}_{2,1}},
 \qquad   \sigma\in ]0,d/2].\end{equation}
Finally, if $r$ is at least quadratic with respect to $Z_2$ (that is
$D^2_{V_i,V_j} r(0)=0$ for $(i,j)\not=(2,2)$), then we have
\begin{eqnarray}\label{eq:R'} 
  &\|r(Z)\|_{\dot{\mathbb{B}}^\sigma_{2,1}}\lesssim \|Z_2\|_{\dot{\mathbb{B}}^{\frac d2}_{2,1}}\|Z_2\|_{\dot{\mathbb{B}}^\sigma_{2,1}}
& \quad\hbox{for }\ \sigma\in]-d/2,d/2]\\\label{eq:R''} 
 \andf  &\|r(Z)\|_{\dot\B^\sigma_{2,1}}\lesssim 
   \|Z_2\|_{\dot\B^{\sigma}_{2,1}}\|Z_2\|_{\dot\B^{\frac d2}_{2,1}}
   +\|Z\|_{\dot\B^{\sigma}_{2,1}} \|Z_2\|_{\dot\B^{\frac d2}_{2,1}}^2
 & \quad\hbox{for }\ \sigma>d/2.
 \end{eqnarray}

\end{Prop}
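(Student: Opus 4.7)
The plan is to extract factors of $Z_2$ from $r(Z)$ via Taylor expansion in the $V_2$-variable, and then to invoke the product laws of Proposition~\ref{LP} and the composition estimate of Proposition~\ref{Composition} in a systematic way. Since $r(Z_1,0)=0$ near $0$, I would write
\[
r(Z)=Z_2\cdot\wt r(Z),\qquad \wt r(Z)\triangleq\int_0^1 D_{V_2}r(Z_1,tZ_2)\,dt,
\]
with $\wt r$ smooth and $\wt r(0)=D_{V_2}r(0)=0$ (because $L\triangleq-D_V\wt H(\bar V)$ enforces $r(0)=0$ and $Dr(0)=0$). Proposition~\ref{Composition} applied with $s_1=s_2=d/2$, $r_1=1$ and the smallness $\|Z\|_{\dot\B^{d/2}_{2,1}}\ll1$ then yields $\|\wt r(Z)\|_{\dot\B^{d/2}_{2,1}}\lesssim\|Z\|_{\dot\B^{d/2}_{2,1}}$, and \eqref{eq:R} follows in the range $\sigma\in]-d/2,d/2]$ by applying \eqref{eq:prod2} to $Z_2\cdot\wt r(Z)$ and placing $\wt r(Z)$ in the $d/2$-slot; for $\sigma>d/2$, the tame estimate \eqref{eq:prod1} combined with $\dot\B^{d/2}_{2,1}\hookrightarrow L^\infty$ and the analogous composition bound $\|\wt r(Z)\|_{\dot\B^\sigma_{2,1}}\lesssim\|Z\|_{\dot\B^\sigma_{2,1}}$ produces \eqref{eq:RR}.

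For the difference estimate \eqref{eq:dr}, I would start from the decomposition
\[
r(Z^1)-r(Z^2)=\wt Z_2\cdot\wt r(Z^1)\,+\,Z^2_2\cdot\bigl(\wt r(Z^1)-\wt r(Z^2)\bigr).
\]
The first summand is handled exactly as in the previous paragraph (with $\wt Z_2$ in place of $Z_2$), producing the term $\|Z^1\|_{\dot\B^{d/2}_{2,1}}\|\wt Z_2\|_{\dot\B^\sigma_{2,1}}$. For the second summand, yet another Taylor expansion gives $\wt r(Z^1)-\wt r(Z^2)=\wt Z\cdot R$ with $R\triangleq\int_0^1 D\wt r(Z^2+\tau\wt Z)\,d\tau$, and the composition estimate \eqref{eq:compo} yields $\|R\|_{L^\infty\cap\dot\B^{d/2}_{2,1}}\lesssim 1$ under the smallness hypotheses. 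Two successive applications of \eqref{eq:prod2} then bound this summand by $\|Z^2_2\|_{\dot\B^{d/2}_{2,1}}\|\wt Z\|_{\dot\B^\sigma_{2,1}}$ for $0<\sigma\leq d/2$, and adding the two pieces yields \eqref{eq:dr}.

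The sharper bounds \eqref{eq:R'}--\eqref{eq:R''} require reading hypothesis \eqref{StructAssum} in its strong form, namely that every monomial in the Taylor expansion of $r$ at $0$ carries at least two factors of $Z_2$ (a condition that is vacuously satisfied in the compressible Euler application of Section~\ref{s:app} since there $r\equiv0$). This yields $D_{V_2}r(Z_1,0)=0$ near $0$, hence $\wt r(Z_1,0)=0$, so a second Taylor expansion produces $\wt r(Z)=Z_2\cdot\hat r(Z)$ with $\hat r(Z)\triangleq\int_0^1 D_{V_2}\wt r(Z_1,tZ_2)\,dt$ smooth: thus $r(Z)$ becomes a bilinear form in $Z_2$ with smooth coefficient $\hat r(Z)$. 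For $\sigma\in]-d/2,d/2]$, two successive applications of \eqref{eq:prod2} combined with $\|\hat r(Z)\|_{L^\infty\cap\dot\B^{d/2}_{2,1}}\lesssim 1$ (obtained from Proposition~\ref{Composition} applied to $Z\mapsto\hat r(Z)-\hat r(0)$, since homogeneous Besov norms ignore constants) give \eqref{eq:R'}; for $\sigma>d/2$, I would instead apply \eqref{eq:prod1} at the outermost product, producing the extra term $\|Z\|_{\dot\B^\sigma_{2,1}}\|Z_2\|_{\dot\B^{d/2}_{2,1}}^2$ of \eqref{eq:R''} from the instance where the high-regularity norm lands on $\hat r(Z)$. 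The main obstacle throughout is the bookkeeping: for each range of $\sigma$ one must choose between \eqref{eq:prod2} and the tame \eqref{eq:prod1}, and one must always keep the smooth coefficient factor ($\wt r$ or $\hat r$) in the low-regularity algebra $\dot\B^{d/2}_{2,1}$ so that the smallness assumption on $\|Z\|_{\dot\B^{d/2}_{2,1}}$ can close the argument.
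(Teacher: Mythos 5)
Your proposal takes essentially the same route as the paper: the Taylor factorization $r(Z)=G(Z)\cdotp Z_2$ with $G$ smooth, $G(0)=0$ (your $\widetilde r$), followed by the product laws \eqref{eq:prod1}--\eqref{eq:prod2} and the composition estimates of Proposition~\ref{Composition}/\eqref{eq:compo}, and a second factorization $G(Z)=Z_2\cdotp\widehat r(Z)$ in the quadratic case (the paper writes this as $r=\widetilde Q(Z_2)F(Z)=F(0)\widetilde Q(Z_2)+G(Z)\widetilde Q(Z_2)$, which is the same constant-splitting you mention). Your observation that \eqref{eq:R'}--\eqref{eq:R''} require the \emph{strong} reading of ``quadratic in $Z_2$'' (namely $D_{V_2}r(Z_1,0)\equiv0$ near $0$, not merely $D^2_{V_i,V_j}r(0)=0$ for $(i,j)\ne(2,2)$) is a legitimate point that the paper's proof also implicitly uses when writing $r(Z)=\widetilde Q(Z_2)F(Z)$.
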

\begin{proof}
Since $r(Z_1,0)=0$ for $Z_1$ close to $0,$ the mean value formula gives
$$r(Z_1,Z_2)=\int_0^1 D_{Z_2}r (Z_1,\tau Z_2)\cdotp Z_2\,d\tau.$$
Furthermore, we have $Dr(0)=0$ and thus $D_{Z_2}r(0)=0$. Hence there exists a smooth function $F$ defined near $0$
and such that 
$D_{Z_2}r(Z)= F(Z)\cdotp Z.$
Consequently, there exists a smooth function $G$ vanishing at $0,$ and such that
$$r(Z_1,Z_2)= G(Z)\cdotp Z_2.$$
Granted with the above decomposition, the first two  inequalities readily follow from 
 Propositions \ref{LP} and \ref{Composition}.
 \medbreak 
 To prove \eqref{eq:dr},  we use the decomposition 
 $$r(Z^1)-r(Z^2) = G(Z^1) \cdotp (Z^2_2-Z^1_2) + \bigl(G(Z^2)-G(Z^1)\bigr)\cdotp Z^2_2,$$
 then Propositions \ref{LP} and \ref{Composition}, combined with  
  Corollary 2.66 from \cite{HJR}. 
  \medbreak
  Finally, if $r$ is quadratic with respect to $Z_2$ then there exists a quadratic form 
  $\wt Q$ and a smooth function $F$ such that 
  $r(Z)= \wt Q(Z_2) F(Z),$ whence
  $$  r(Z)= F(0) \wt Q(Z_2) + G(Z)\wt Q(Z_2)\with G(Z)\triangleq F(Z)-F(0).$$
    In the case $\sigma\in]-d/2,d/2],$ we can thus write by virtue of Propositions \ref{LP}
    and \ref{Composition}, 
  $$  \|r(Z)\|_{\dot\B^\sigma_{2,1}}\lesssim  \|\wt Q(Z_2)\|_{\dot\B^{\sigma}_{2,1}} 
  (1+\|Z\|_{\dot\B^{\frac d2}_{2,1}}) \lesssim \|Z_2\|_{\dot\B^{\frac d2}_{2,1}}
   \|Z_2\|_{\dot\B^{\sigma}_{2,1}} $$
  while, if $\sigma >d/2,$ 
  $$ \|r(Z)\|_{\dot\B^\sigma_{2,1}}\lesssim 
   \|\wt Q(Z_2)\|_{\dot\B^{\sigma}_{2,1}}(1  +\|Z\|_{\dot\B^{\frac d2}_{2,1}})
   +\|Z\|_{\dot\B^{\sigma}_{2,1}} \|\wt Q(Z_2)\|_{\dot\B^{\frac d2}_{2,1}},$$
   whence the last inequality.
  \end{proof}
\bibliographystyle{plain}

\bibliography{Biblio3D}

\end{document}